\documentclass[11pt]{amsart}

\usepackage{mathrsfs}  
\usepackage[backref,colorlinks]{hyperref} 
\usepackage{amsmath,amssymb,amsthm,bm}
\usepackage[foot]{amsaddr}
\usepackage[abbrev,msc-links]{amsrefs}   
\usepackage[utf8]{inputenc}
\usepackage{tikz}
\usepackage{enumerate}
\usepackage{enumitem}
\usepackage{graphicx}
\usepackage{lineno}
\usepackage{mathtools}

\DeclarePairedDelimiter\ceil{\lceil}{\rceil}
\DeclarePairedDelimiter\floor{\lfloor}{\rfloor}

\DeclarePairedDelimiter{\final}{\langle}{\rangle}

\DeclareMathOperator{\dist}{dist}

\theoremstyle{plain}
\newtheorem{thm}{Theorem}[section]

\newtheorem{prop}[thm]{Proposition}
\newtheorem{clm}[thm]{Claim}

\newtheorem{lem}[thm]{Lemma}

\newtheorem*{clm*}{Claim}

\theoremstyle{definition}
\newtheorem{rem}[thm]{Remark}
\newtheorem{dfn}[thm]{Definition}

\newtheorem{obs}[thm]{Observation}

\numberwithin{equation}{section}

\normalsize                              
\setlength{\hoffset}{-1.6cm}
\setlength{\marginparwidth}{1.5cm}
\addtolength{\textwidth}{4cm}
\addtolength{\voffset}{-0.5cm}
\addtolength{\textheight}{1cm}


\def\NN{\mathbb N}
\def\ZZ{\mathbb Z}



\title{Slow graph bootstrap percolation  I:  Cycles}
 \author{David Fabian $^{\ast}$}
 \author{Patrick Morris $^{1,\dagger}$}
 \author{Tibor Szab\'o $^{2,\ddagger}$}

 \address{$^1$ Departament de Matem\`atiques, Universitat Polit\`ecnica de Catalunya (UPC), Barcelona, Spain.}
 \address{$^2$ Institute of Mathematics, Freie Universit\"at Berlin, Germany}

 \thanks{$^\ast$ Research supported by the Deutsche Forschungsgemeinschaft (DFG)
 Graduiertenkolleg “Facets of Complexity” (GRK 2434). }

 \thanks{$^\dagger$ Research supported by   the DFG Walter Benjamin program - project number 504502205.}

 \thanks{$^\ddagger$ Research
 supported by the DFG under Germany’s Excellence Strategy - The Berlin Mathematics Research Center
 MATH+ (EXC-2046/1, project ID: 390685689).}

 \email{dfabian@mailbox.org, pmorrismaths@gmail.com,  szabo@math.fu-berlin.de}

\date{\today}

\begin{document}

\begin{abstract}
Given a fixed graph $H$ and an $n$-vertex graph $G$,  the $H$\emph{-bootstrap percolation process} on $G$ is defined to be the sequence of graphs $G_i$, $i\geq 0$ which starts with $G_0 := G$ and in which $G_{i+1}$ is obtained from $G_i$ by adding every edge that completes a copy of $H$. We are interested in $M_H(n)$ which is  the maximum number of steps, over all $n$-vertex graphs $G$, that this process takes to stabilise. 
We determine this maximum  running time precisely when $H$ is a cycle, giving the first infinite family of graphs $H$ for which an exact solution is known. We find that $M_{C_k}(n)$ is of order $\log_{k-1}(n)$ for all $3\leq k\in \mathbb{N}$. Interestingly though, the function exhibits different behaviour depending on the parity of $k$ and the exact location of the values of $n$ for which $M_H(n)$ increases is determined by the Frobenius number of a certain numerical semigroup depending on $k$.
\end{abstract}

\maketitle

\section{Introduction}
Introduced by Bollob\'as \cite{bollobas1968weakly} in 1968, the $H$\textit{-bootstrap percolation process} ($H$\emph{-process} for short) on an $n$-vertex graph $G$ is the sequence $(G_i)_{i\geq 0}$ of graphs defined by $G_0 := G$ and
    \begin{linenomath} \begin{align*}
    V(G_i) &:= V(G), \\
    E(G_i) &:= E(G_{i-1}) \cup \left\{e\in\binom {V(G)} 2 : n_H\left(G_{i-1}+e\right)>n_H(G_{i-1})\right\}, 
    \end{align*} \end{linenomath}
for $i\geq 1$,  where  $n_H(G)$ denotes the number of copies of $H$ in $G$.
We call $G$ the \emph{starting graph} of the process.
Note that this process will \emph{stabilise} at some point with $\tilde G=G_t=G_{t+1}=\cdots$ for some $n$-vertex $\tilde{G}$ and $t\in \NN$. We call $\tilde G$ the \emph{final graph} of the process and if $\tilde G=K_n$,  we say the process \emph{percolates}. The $H$-bootstrap process is linked to the  notion to the study of \emph{cellular automata}, a deep topic introduced by von Neumann (see~\cite{neumann1966theory}).  Indeed, a common setup of a cellular automaton is to study the spread of a virus through a (hyper-)graph where some vertices are initially infected and the virus is passed on to other vertices at each time step according to some local homogeneous rule. By considering the hypergraph whose vertex set is $E(K_n)$ and whose edge set encodes copies of $H$, one can view the $H$-bootstrap process as a cellular automaton. The literature on bootstrap percolation is vast and the topic has been studied from many different perspectives as the concept can describe important processes occurring in physics, sociology and computer science (see for example~\cite{adler2003bootstrap}). 

In recent years, this study has become prominent in extremal and probabilistic combinatorics with techniques from these areas being successfully applied to address key problems from other areas  (see for example the very nice survey of  Morris~\cite{morris2017bootstrap}) as well as  new lines of research in combinatorics being motivated from this connection. In particular, inspired by analogous questions for similar automata studied in physics~\cite{chalupa1979bootstrap}, Balogh, Bollob\'as and Morris~\cite{balogh2012graph} initiated the study of the $H$-bootstrap process (and coined this terminology) when the starting graph $G$ is the Erd\H{o}s-Renyi random graph $G_{n,p}$ and asked for the threshold probability at which the process with initial graph $G_{n,p}$ percolates.

\subsection{The running time of bootstrap processes}
Most of the research on the graph bootstrap percolation process has focused on whether or not the process \emph{percolates}. Adopting the cellular automata view of a virus spreading, this translates to asking whether or not the virus will reach the whole population, which is certainly a natural  line of  investigation. Here,  we will rather be interested in \emph{how long} the virus will spread for, a question which one could also  imagine being  important in applications. This perspective, however, has been  considerably less explored until recently.

We define the \textit{running time} of the $H$-bootstrap process $(G_i)_{i\geq 0}$ with initial graph $G$ to be $\tau_H(G):=\min \{t \in \NN: G_t=G_{t+1}\}$, the time at which the process stabilises.  Bollob\'as posed the  extremal question of determining the \emph{maximum} running time of the $H$-bootstrap process. 
	
    \begin{dfn} \label{def:maxruntime}
    For $n\in\NN$, we define $M_H(n)$ to be the maximum running time of the $H$-bootstrap process over all choices of starting graph $G$ with $n$ vertices, that is,
        \[M_H(n):=  \max_{|{V(G)}|=n}\tau_H(G).\]
    \end{dfn}
	
The initial focus of research into maximum running times has been the case when $H$ is a clique. When $H=K_3$ and $G$ is a path with $n$ vertices, one can see that $\tau_H(G)=\ceil{\log_2(n-1)}$ as the distance between any pair of non-adjacent vertices approximately halves at each step. Moreover,  this happens for any pair of vertices in each connected component of \emph{any} initial graph. As the $n$-vertex path maximises the diameter of an $n$-vertex graph, we have $\tau_H(G)\le \ceil{\log_2(n-1)}$ for all $n$-vertex $G$ and hence  $M_H(n)=\ceil{\log_2(n-1)}$. For $K_4$, the maximum running time is much larger. Indeed, Bollob\'as, Przykucki, Riordan and Sahasrabudhe~\cite{bollobas2017maximum} and, independently, Matzke~\cite{matzke2015saturation} showed that $M_{K_4}(n)=n-3$, for all $n\ge 3$.

Bollob\'as, Przykucki, Riordan and Sahasrabudhe~\cite{bollobas2017maximum} also realised that the running times could be even longer for $K_r$-processes as $r$ grows and they gave constructions showing that $M_{K_r}(n)\ge n^{2-\lambda_r-o(1)}$ for $r\ge 5$, where $\lambda_r$ is some explicit constant such that $\lambda_r\rightarrow 0$ as $r\rightarrow \infty$.  However the same authors believed that there was a limit to how long the   $K_r$-bootstrap process could last, conjecturing that for all $r\ge 5$, $M_{K_r}(n)=o(n^2)$. It turns out that this conjecture was in fact false. Indeed, Balogh, Kronenberg, Pokrovskiy and Szab\'o~\cite{balogh2019maximum}  proved  that $M_{K_r}(n)=\Omega(n^2)$ for all $r\ge 6$.

Interestingly, their construction could not be pushed to give quadratic time for $K_5$, but they could show that $M_{K_5}(n)\ge n^{2-o(1)}$ using a connection to additive combinatorics and a  famous construction of Behrend~\cite{behrend1946sets} giving large sets free of $3$-term arithmetic progressions. 
Determining the asymptotics of $M_{K_5}(n)$, and in particular, if it can be quadratic or not, remains a very interesting open problem. 

Recently, Noel and Ranganathan \cite{noel_running_2022}, Hartarsky and Lichev \cite{hartarsky_maximal_2022}, and Espuny Díaz, Janzer, Kronenberg and Lada \cite{espuny_diaz_long_2022} extended the study of $M_H(n)$ to hypergraphs, their main focus being the case in which $H$ is a clique.

\subsection{The cycle bootstrap percolation process}
In a series of papers, we will initiate a systematic study of the maximum running time of $H$-bootstrap percolation processes as $H$ varies. Whilst our focus will predominantly be to study the asymptotics of the function $M_H(n)$ and its dependence on different properties of $H$, we begin our explorations by studying one family of graphs $H$ in detail, namely cycles. 
In this paper we determine the precise value of $M_{C_k}(n)$ for all $k\geq 3$ and all $n$ sufficiently large, providing the first infinite family of graphs $H$ for which an exact running time $M_H(n)$ is known. In fact before this work, the only functions $M_H(n)$ that have been fully determined for non-trivial choices of $H$ are when $H=K_3$, $H=K_4$ \cite{matzke2015saturation,przykucki2012maximal} as discussed above and the $3$-uniform clique on $4$ vertices minus an edge in the hypergraph setting \cite{espuny_diaz_long_2022}. 

    \begin{thm}\label{thm:cycles}
    Let $k\geq 3$. For sufficiently large $n\in\NN$ we have
        \begin{linenomath} \begin{equation}\label{eq:cycles_main}
        M_{C_k}(n) = \begin{cases}
        \ceil*{\log_{k-1} (n+k^2-4k+2)} & \text{if $k$ is odd} ;\\
        \ceil*{\log_{k-1}\left(2n+k^2-5k\right)} & \text{if $k$ is even}.
        \end{cases}
        \end{equation} \end{linenomath} 
    \end{thm}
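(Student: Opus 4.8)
\medskip
\noindent\textbf{Proof plan.}
I would prove matching lower and upper bounds for $\tau_{C_k}(G)$, with the bulk of the effort going into an exact description of how the edge set evolves for the extremal starting graphs and into extracting the additive constants, which turn out to be Frobenius numbers of explicit numerical semigroups. First I would record two elementary facts. Since $C_k$ is connected, the process does not interact across components, so $\tau_{C_k}(G)=\max_{C}\tau_{C_k}(G[C])$ over the components $C$ of $G$; and since $C_k$ has no vertex of degree other than $2$, an edge $uv\notin E(G_{i-1})$ is added in step $i$ if and only if $G_{i-1}$ contains a vertex-simple $u$--$v$ path of length exactly $k-1$. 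The parity of $k$ then governs the qualitative behaviour: if $k$ is odd then $k-1$ is even, so the very first edge added inside a component joins two vertices of equal distance-parity, creating an odd cycle, after which the component keeps growing and (for the path-like components that turn out to be extremal) becomes complete; if $k$ is even then $k-1$ is odd, every added edge crosses any bipartition, so a bipartite component stays bipartite for all time and terminates at the complete bipartite graph on its two classes. This is the origin of the two cases in the theorem.

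\smallskip
\noindent\emph{Crude bound and the extremal graphs.} A bound of the right order follows from a ``hopping'' argument: if $R_i(G)$ is the largest $r$ such that any two vertices at distance $\le r$ in $G$ are adjacent in $G_i$, then chopping a shortest $u$--$v$ path into $k-1$ consecutive segments of length $\le R_i$ exhibits a simple length-$(k-1)$ path in $G_i$, so $R_{i+1}\ge (k-1)R_i$ up to a bounded correction (using the odd cycle to pad short distances when $k$ is odd). This already gives $\tau_{C_k}(G)=\log_{k-1}n+O_k(1)$ for every $n$-vertex $G$. For the \emph{sharp} statement I would take as extremal starting graph, when $k$ is odd, the path $P_n$ (possibly with a bounded gadget glued at one end to realise the additive constant), and when $k$ is even, a path augmented with carefully designed bounded ``detour'' gadgets at both ends, so that one distinguished edge can be completed only once the filled-in region has propagated inward from \emph{both} ends of the path and met in the middle --- this is what doubles the effective diameter and produces the extra factor $2$ inside the logarithm.

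\smallskip
\noindent\emph{Exact analysis via a numerical semigroup.} For the path (and its gadgeted variants) I would track, after $t$ steps, the exact set $D_t\subseteq\mathbb N$ of index-differences $|i-j|$ for which $\{v_i,v_j\}$ is present. One checks that $D_1=\{1,k-1\}$ and, inductively, that $D_{t}=\{(k-1)^t-D:\ D\in S,\ 0\le D<(k-1)^t\}$, where $S$ is the numerical semigroup $S=\langle k-2,\,k\rangle$ for odd $k$ (and a close variant for even $k$): indeed, building a length-$(k-1)$ path out of moves from $D_{t-1}$, each move $(k-1)^{t-1}-d$ ``spends'' a deficit $d\in S$, the deficits add up, and $S$ is closed under addition, so the reachable displacements are exactly $(k-1)^t$ minus an element of $S$. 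Since the Frobenius number of $\langle k-2,k\rangle$ is $(k-2)k-(k-2)-k=k^2-4k+2$, the set $D_t$ contains the solid block $\{1,\dots,(k-1)^t-(k^2-4k+2)-1\}$ and only a sparse tail above it; hence the non-bipartite component $P_n$ becomes complete exactly when $(k-1)^t\ge n+k^2-4k+2$, i.e.\ at step $\ceil{\log_{k-1}(n+k^2-4k+2)}$. The same bookkeeping, combined with the two-ended propagation, yields $2n+k^2-5k$ inside the logarithm for even $k$, and a refined version of the hopping argument together with the fact that the extremal configurations are as described gives the matching upper bound for an arbitrary starting graph.

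\smallskip
\noindent\emph{Main obstacles.} I expect three points to carry the real weight. First, one must check that every displacement which is \emph{arithmetically} reachable is actually realised by a genuinely \emph{vertex-simple} path of length $k-1$; repeated vertices are a real hazard, particularly near the ends of the host path where the available moves become one-sided, and one must verify that the handful of genuinely missing differences is accounted for precisely by the semigroup rather than by accidental collisions. Second, one must show that this semigroup description of the tail of $D_t$ stabilises after the first one or two steps and is not disturbed as $D_t$ continues to grow. Third --- and this is probably the hardest --- for even $k$ one must pin down the right gadgeted construction, prove that the factor $2$ and the constant $k^2-5k$ are simultaneously optimal, and carry the upper bound through with matching precision in the bipartite setting, which means tracking not a single reach parameter but how the reach around a vertex lags behind that of its neighbours.
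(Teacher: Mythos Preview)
Your odd-$k$ plan is essentially the paper's: path as extremal graph, the semigroup $S=\langle k-2,k\rangle$, and the Frobenius number $F(k-2,k)=k^2-4k+2$ pinning down the additive constant. One caveat: the exact equality $D_t=\{(k-1)^t-D:D\in S,\;0\le D<(k-1)^t\}$ is stronger than what the paper actually proves or needs. The paper shows the containment $D_i\subseteq A_i$ (your deficit-adding argument) for the lower bound, but for the upper bound it only shows that a \emph{subset} $A'_i\subseteq A_i$ (with an explicit bound on $\alpha+\beta$) lies in $D_i$, and then checks that these $A'_i$ together cover $[n-1]$. Proving your full equality would require handling the vertex-simplicity issue more carefully near the ends of the path than you indicate.

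For even $k$ there is a genuine gap. A bipartite starting graph stays bipartite, terminates at the complete bipartite graph on its parts, and---crucially---runs only for about $\log_{k-1}(n)$ steps, \emph{not} $\log_{k-1}(2n)$. So the extremal graph for even $k$ must be non-bipartite: in the paper it is a path with a single triangle attached at one end (the graph $P^\Delta$), and the last edge to appear is an ``even'' edge $v_\ell w_\ell$ whose shortest \emph{odd} walk has length roughly $2\ell\approx 2n$; that is where the factor $2$ comes from. Your picture of ``gadgets at both ends, propagation meeting in the middle'' has this inverted, and your identification of the bipartite upper bound as the delicate case is backwards: the bipartite case is handled just like odd $k$ via the path analysis, while the hard case is the \emph{non-bipartite} upper bound. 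There one must show that for arbitrary non-bipartite $G$ every pair $x,y$ becomes adjacent by time $r$, and the controlling quantity is the length of a shortest odd $xy$-walk. The paper decomposes such a walk as two paths $v_\ell\ldots v_0 w_0\ldots w_{\ell'}$ glued at a single even edge, and then builds an explicit $xy$-path of length $k-1$ in $G_{r-1}$ using edges that the semigroup analysis guarantees along each arm together with a ``bridge'' edge near the gluing point; this is the most intricate part of the argument and is not accounted for in your plan.
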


    \begin{rem}
    In both the odd and the even case \eqref{eq:cycles_main} only holds when $n$ is larger than roughly $k^{\binom k2}$ (with some work this can be improved to $k^{k/2}$).
    For smaller $n$ the behaviour is different as a single $k$-cycle with a well-placed chord achieves a longer running time than the general constructions.
    \end{rem}

As discussed above, the bootstrap process of the triangle on an $n$-vertex graph has a running time of at most $\lceil\log_2 (n-1)\rceil$. The key observation is that in the $K_3$-bootstrap process the distance between two vertices is roughly halved in every step. With this in mind it should not be surprising that the maximum running time of the bootstrap process of $C_k$ for $k\geq 3$ is asymptotically of the order  $\log_{k-1} n$.  What is perhaps unexpected is  that the jumps of the monotone increasing function $M_{C_k}(n)$, i.e. those $n\in\NN$ with $M_{C_k}(n+1) =  M_{C_k}(n)+1$, behave differently in terms of $k$ depending on the parity of $k$. For odd $k$ the jumps are close to powers of $k-1$ while for even $k$ the jumps are near one half times powers of $k-1$. Moreover, the precise location of the jumps is determined by a quadratic function of $k$ in both the even and odd cases. This function turns out to be controlled by the  \textit{Frobenius number} of the numerical semi-group generated by $k-2$ and $k$, that is, the largest  natural number that cannot be expressed as an integral linear combination of $k-2$ and $k$ with non-negative coefficients. This link to an arithmetic problem of  determining the largest gap in a numerical semi-group presents itself naturally in the analysis of the $C_k$-process, as the last edge to be added before stabilising will correspond to this gap. 

As shown in \cite{FMSz2}, examples of $H$ such that $M_H(n)$ is asymptotically sublinear must have strong restrictions on their degree sequence. Indeed, it is shown there that if  every component of a graph $H$ has minimum degree at least 2 and maximum degree at least 3, then $M_H(n)=\Omega(n)$. In the case where both the maximum and  minimum degrees are 2, that is, when $H$ is a disjoint union of cycles, our second result shows that the maximum running time is controlled by the largest cycle in $H$. 
    
    \begin{thm}\label{thm:multiple_cycles} 
    If  $s\geq 2$ and $H := C_{k_1} \sqcup \ldots \sqcup C_{k_s}$ is the disjoint union of cycles of lengths $k_1 \geq \ldots \geq k_s$, then for sufficiently large $n$ we have that
        \[
        \log_{k_1-1}(n) - 1 \leq M_H(n) < \log_{k_1-1}(n) + 6s^6k_1^2.
        \]
    \end{thm}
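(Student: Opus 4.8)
The plan is to prove the two bounds separately, in both cases reducing to the single‑cycle result (Theorem~\ref{thm:cycles}) for the longest cycle $C_{k_1}$ of $H$; write $k:=k_1$ and $\tau:=\tau_H$. For the lower bound, the idea is to run a long $C_k$‑type process on a path while keeping a bounded gadget $W$ off to the side that is rich enough that every step of the path‑process simultaneously completes a full copy of $H$. Take $W$ to consist of $k+1$ vertex‑disjoint copies of $H$ (so $|V(W)|=(k+1)\sum_i k_i$ depends only on $k$ and $s$), set $n'':=n-|V(W)|$, and let the starting graph be $G:=P_{n''}\sqcup W$, where $P_{n''}=v_1v_2\cdots v_{n''}$ is a path. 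First, no edge of $K_n$ between $V(P_{n''})$ and $V(W)$ is ever added, since the first such crossing edge would lie on a cycle of some copy of $H$, whereas a cycle through a crossing edge uses at least two of them. Second, deleting the at most $k$ vertices of any one cycle still leaves, inside $W$, disjoint copies of all the $C_{k_i}$; hence an edge $e\subseteq V(P_{n''})$ is added precisely when $e$ completes \emph{some} $C_{k_i}$, $i\in[s]$, inside the current graph on $V(P_{n''})$. An induction on the step number now shows that in this ``union‑of‑cycles'' process every edge present after step $r$ joins vertices of index‑distance at most $(k-1)^r$ (a newly added edge completes some $C_{k_i}$, hence spans a path of at most $k_i-1\le k-1$ already‑present edges). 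Since the closure of the process contains an edge $v_1v_j$ with $j\ge n''-1$ --- it is $K_{n''}$, except when every $k_i$ is even, in which case it is the complete bipartite graph on the two parity classes and one takes $j\in\{n''-1,n''\}$ of the right parity --- that edge cannot appear before step $\log_{k-1}(n''-2)$, so $\tau(G)\ge\log_{k-1}(n''-2)\ge\log_{k-1}(n)-1$ for $n$ large.

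For the upper bound, call an $n$‑vertex graph $F$ \emph{abundant} if it contains $k+1$ vertex‑disjoint copies of $H$. The first ingredient is a domination statement: if $G_{r_0}$ is abundant in the $H$‑process on an $n$‑vertex $G$, then $\tau(G)\le r_0+M_{C_k}(n)$. Indeed, abundance is preserved under adding edges, and in an abundant $F$ any edge completing a copy of $C_k$ also completes a copy of $H$: delete the $\le k$ vertices of that $C_k$; of the $k+1$ disjoint copies of $H$ at most $k$ are hit, so a disjoint copy $H'$ survives, and dropping the $C_k$‑component of $H'$ supplies the remaining $s-1$ cycles. Hence, running the $H$‑process and the $C_k$‑process both from $G_{r_0}$, the former contains the latter at every step; moreover the $H$‑process from $G_{r_0}$ is at every step contained in the $C_k$‑closure of $G_{r_0}$ (an $H$‑completion is a $C_k$‑completion, and the $C_k$‑closure is $C_k$‑closed). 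Since after $M_{C_k}(n)$ steps the $C_k$‑process from $G_{r_0}$ has reached exactly that $C_k$‑closure, the $H$‑process is pinned to it from then on. By Theorem~\ref{thm:cycles}, $M_{C_k}(n)\le\log_{k-1}(n)+k$ for $n$ large.

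The second, and main, ingredient is a warm‑up statement: there is a constant $W_0=W_0(k,s)$ with $W_0+k\le k^3s^4$ such that for every $n$‑vertex $G$, either $\tau(G)\le W_0$ or $G_{W_0}$ is abundant. Combined with the domination statement this gives $\tau(G)<\log_{k-1}(n)+k^3s^4$ for every $n$‑vertex $G$ and all large $n$, which is the upper bound. Proving the warm‑up statement is where the real work lies: one must rule out the process running for many steps without quickly building many disjoint copies of $H$. The mechanism is that each productive step completes a \emph{new} copy of some $C_{k_i}$ (the added edge lies on it), while a non‑abundant graph is rigid --- if the final graph $\tilde G$ is not abundant, then the union $Y$ of a maximum vertex‑disjoint family of copies of $H$ in $\tilde G$ satisfies $|Y|\le k\,|V(H)|$, the graph $\tilde G-Y$ is both $H$‑free and $H$‑closed, and consequently every edge ever added inside $V(G)\setminus Y$ must complete a copy of $H$ meeting the bounded set $Y$. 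Iterating this over the $s$ distinct cycle‑lengths of $H$ (peeling off, at each stage, a maximum disjoint family of the heaviest sub‑union $C_{k_1}\sqcup\cdots\sqcup C_{k_t}$ still present) shows that, unless abundance is reached within a bounded number of steps, a bounded number of vertex deletions makes $\tilde G$ itself $C_k$‑free, which pins down the evolution inside the long components and caps the number of steps. The delicate point --- and the main obstacle --- is quantifying how much the bounded set $Y$ can ``help'' completions taking place far from it; everything else (the domination argument, and the crossing‑edge and reach estimates in the construction) is soft. The generous $k^3s^4$ error term in Theorem~\ref{thm:multiple_cycles} means only crude bounds on $W_0$ are needed, which keeps this step manageable.
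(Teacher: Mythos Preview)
Your lower bound is correct and close to the paper's, which uses the smaller gadget $C_{k_2}\sqcup\cdots\sqcup C_{k_s}$ in place of your $k_1+1$ copies of $H$; the paper only needs the $C_{k_1}$-process on the path to be contained in the $H$-process, and the distance estimate (your induction, the paper's Claim~\ref{clm:mult lower}) already holds for the full $H$-process.

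The domination step contains a real error. The claim ``an $H$-completion is a $C_k$-completion'' is false for $s\ge 2$: the added edge lies on the $C_{k_j}$-component of the completed copy of $H$, and $j$ need not be $1$. For instance, with $H=C_6\sqcup C_3$ and $G_{r_0}$ bipartite and abundant, $C_3$-completions create edges outside $\langle G_{r_0}\rangle_{C_6}$, which remains bipartite. Your sandwich therefore collapses. The conclusion you want is still salvageable up to an additive $O_{k_1,s}(1)$: once abundant, the $H$-process coincides with the simultaneous $(C_{k_1},\ldots,C_{k_s})$-process, every non-stable component acquires a $k_1$-cycle within $O_{k_1}(1)$ steps, and then the contained $C_{k_1}$-process finishes it in $M_{C_{k_1}}(n)$ further steps (plus $O(1)$ more to turn complete bipartite into complete when some $k_j$ is odd). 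But this is not the argument you gave.

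The warm-up, which you rightly flag as the crux, is not proved. From non-abundance of $\tilde G$ you correctly extract a bounded $Y$ with $\tilde G-Y$ $H$-free and deduce that every edge added inside $V\setminus Y$ completes a copy of $H$ meeting $Y$; but that copy's $C_{k_1}$-component may lie entirely inside $Y$ while the added edge sits on a $C_{k_j}$ arbitrarily far away, so this does not by itself cap the number of steps, and the subsequent ``peeling'' is too vague to evaluate (it is not clear why it yields $C_{k_1}$-freeness after bounded deletion, nor why that would bound $\tau_H(G)$). The paper avoids abundance entirely. It fixes one system $C'_1,\ldots,C'_s\subseteq G_1$, first spends $M_{C_{k_1}}(n)+O_{k_1}(1)$ steps to stabilise every component avoiding all the $C'_j$ and to make every remaining vertex adjacent to some $C'_j$, and then runs a bounded endgame: with $U_{j,i}:=\bigcup_{v\in V(C'_j)}N_{G_i}(v)\setminus V'$ and $R_{j,i}:=\{\ell:U_{\ell,i_2}\subseteq U_{j,i}\}$, it shows that in every window of $c=O(k_1^2s^2)$ steps some $R_{j,\cdot}$ strictly grows, hence the process terminates after at most $s(s-1)c$ further steps. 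Note the order is the reverse of your plan: the $\log_{k_1-1}(n)$ budget is spent first and the constant budget afterwards.
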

	
\subsection*{Organisation.}
Necessary notation is given  in Section \ref{sec:notation} and some basic results on the $C_k$-process are collected in Section \ref{sec:cycle process}.
We prove Theorem \ref{thm:cycles} in Sections  \ref{sec:cycles_lower} and  \ref{sec:cycles_upper} after giving an outline of the proof in Section \ref{sec:cycles_outline}. Finally, in Section \ref{sec:multiple}, we establish Theorem \ref{thm:multiple_cycles}.

\section{Notation and Preliminaries}\label{sec:notation}
If $H$ is a graph and $v$ is a vertex of $H$ then $H-v$ denotes the graph obtained from $H$ by removing $v$ and all edges incident to it, i.e.
	\[
	V(H-v) = V(H)\setminus\{v\}, \qquad\qquad E(H-v) = E(H)\setminus\{ e\in E(H) : v\in e \}.
	\]
For an edge $e\in E(H)$ we define $H-e$ as the graph obtained by removing $e$ from the edge set. For an edge $e\in E(K_{v(H)})$ on $V(H)$, we define $H+e$ to be the graph with vertex $V(H+e)=V(H)$ and edge set $E(H+e)=E(H)\cup \{e\}$. 

If $G\subseteq G'$ and $X,Y$ are disjoint subsets of $V(G')$ we write
    \[
    E_G(X,Y) = \left\lbrace  xy \in E(G) : x\in X, y \in Y \right\rbrace.
    \]
We write $N_G(v)$ for the set of neighbours of $v$ in $G$. 

\subsection*{Paths} 
We denote the path on $n$ vertices by $P_n$, i.e.
    \[
    V(P_n) = \{ 0,\ldots,n-1 \} \qquad, \qquad E(P_n) = \left\lbrace \{ i,i+1 \} : 0\leq i < n-1 \right\rbrace.
    \]
The \emph{length} of a path is its number of edges.

\subsection*{Frobenius numbers}
The \emph{Frobenius number} $F(x,y)$ of two positive, coprime integers $x$, $y$ is the largest natural number that cannot be expressed as an integral linear combination of $x$ and $y$ with non-negative coefficients, i.e. 
    \[
    F(x,y) := \max \left( \ZZ\setminus\left\lbrace \alpha x + \beta y : \alpha,\beta \in \NN_0\right\rbrace \right)
    \]
where $\NN_0$ denotes the set of non-negative integers.
The precise formula $F(x,y) = xy - x - y$ is well-known.
A thorough treatise of Frobenius numbers and their generalisations can be found in \cite{alfonsin2005diophantine}.
We are interested in $F(k-2,k)$ for odd integers $k\geq 3$, in which case the above formula gives
    \begin{equation}\label{eq:Frobenius}
    F(k-2,k) = k^2-4k+2.
    \end{equation}

If $k$ is even we set $F'(k-2,k)$ to be the largest multiple of $\gcd(k-2,k) =2$ that cannot be written as an integral linear combination of $k-2$ and $k$ with non-negative coefficients, i.e.
    \begin{equation}\label{eq:FrobeniusDash}
    F'(k-2,k) := 2\cdot F\left(\frac{k-2}2,\frac k 2\right) = \frac{k^2}{2}-3k+2.
    \end{equation}

\subsection*{Sumsets}
Given $h\in\NN$ and a set $A$ of integers, $hA$ denotes the $h$-fold sumset
	\[ hA := \{ a_1+\ldots+a_h : a_1,\ldots,a_h\in A \}. \]

\subsection*{Graph bootstrap processes} 
Whenever the process $(G_i)_{i\geq 0}$ is clear from context, we say that a property of a graph holds \emph{at time} $i$ if $G_i$ has that property.

\subsection*{Stable graphs}
All graphs of an $H$-process on a given $n$-vertex graph $G$ will be considered as subgraphs of $K_n$.
We say that a graph $G$ is $H$\emph{-stable} if $n_H(G+e) = n_H(G)$ for every $e\in \binom{V(G)}2$.
For any graph $G$ we define $\langle G \rangle_H$ to be the final graph of the $H$-process on $G$.  A short induction shows that every $H$-stable graph containing $G$ must also contain every graph of the $H$-process on $G$. Therefore $\langle G \rangle_H$ is the smallest $H$-stable graph in which $G$ appears as a subgraph.

\subsection*{Graph homomorphisms}
Given  graphs $G,G'$, a map  $\phi:V(G)\rightarrow V(G')$ is a  \emph{graph homomorphism} if for any $e=uv\in E(G)$, we have that $\phi(e)=\phi(u)\phi(v)\in E(G')$. In order to signify the added condition that edges should map to edges we will write graph homomorphisms as $\phi:G\rightarrow G'$. We say that a graph homomorphism $\phi$ is injective if $\phi$ is injective on $V(G)$ and we say $\phi:G\rightarrow G'$ is a graph \emph{automorphism} if $G=G'$ and $\phi$ is injective (and hence bijective). We let $\mathrm{Hom}(G, G')$ denote the set of homomorphisms from $G$ to $G'$ and let $\mathrm{Aut}(G)$ denote the set of automorphisms of $G$. The following observation shows that injective graph homomorphisms are preserved through the graph bootstrap process. 

    \begin{obs}\label{obs:hom}
    Let $\varphi: G \to G'$ be an injective graph homomorphism, and let $(G_i)_{i\geq0}$, $(G'_i)_{i\geq0}$ be the respective $H$-processes on $G$ and $G'$.
    Then $\varphi \in \mathrm{Hom}(G_i, G'_i)$ for every $i\geq 0$.
    \end{obs}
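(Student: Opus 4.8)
The plan is a straightforward induction on the time step $i$, the only subtle point being to keep track of injectivity. For $i=0$ the claim is exactly the hypothesis, since $\mathrm{Hom}(G_0,G_0')=\mathrm{Hom}(G,G')\ni\varphi$. So assume $\varphi\in\mathrm{Hom}(G_i,G_i')$ for some $i\geq 0$; I want to show $\varphi\in\mathrm{Hom}(G_{i+1},G_{i+1}')$. Fix an edge $uv\in E(G_{i+1})$; it suffices to show $\varphi(u)\varphi(v)\in E(G_{i+1}')$. By the definition of the $H$-process, either $uv\in E(G_i)$, in which case $\varphi(u)\varphi(v)\in E(G_i')\subseteq E(G_{i+1}')$ by the inductive hypothesis and we are done, or else $n_H(G_i+uv)>n_H(G_i)$, which in particular forces $uv\notin E(G_i)$ and yields a copy of $H$ in $G_i+uv$ using the edge $uv$: that is, an injective graph homomorphism $\psi\colon H\to G_i+uv$ together with an edge $ab\in E(H)$ such that $\{\psi(a),\psi(b)\}=\{u,v\}$.

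In this second case, observe first that $\varphi$, being a homomorphism on $G_i$ and sending the single extra edge $uv$ to $\varphi(u)\varphi(v)$, extends to a graph homomorphism $G_i+uv\to G_i'+\varphi(u)\varphi(v)$. Composing, $\varphi\circ\psi\colon H\to G_i'+\varphi(u)\varphi(v)$ is a graph homomorphism, and it is \emph{injective} because both $\psi$ and $\varphi$ are; moreover $\{(\varphi\circ\psi)(a),(\varphi\circ\psi)(b)\}=\{\varphi(u),\varphi(v)\}$, so the copy of $H$ given by $\varphi\circ\psi$ uses the edge $\varphi(u)\varphi(v)$. Now split on whether this edge already lies in $G_i'$: if $\varphi(u)\varphi(v)\in E(G_i')$ then $\varphi(u)\varphi(v)\in E(G_i')\subseteq E(G_{i+1}')$; if $\varphi(u)\varphi(v)\notin E(G_i')$ then $\varphi\circ\psi$ is a copy of $H$ in $G_i'+\varphi(u)\varphi(v)$ not present in $G_i'$, so $n_H(G_i'+\varphi(u)\varphi(v))>n_H(G_i')$ and hence $\varphi(u)\varphi(v)$ is added at step $i+1$, i.e.\ $\varphi(u)\varphi(v)\in E(G_{i+1}')$. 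Either way the inductive step is complete.

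The argument is essentially bookkeeping, and I do not expect a genuine obstacle. The one place that requires care — and that explains why injectivity of $\varphi$ is hypothesised — is the claim that $\varphi\circ\psi$ is injective, so that its image is a bona fide copy of $H$ (a non-injective $\varphi$ could collapse a copy of $H$ to a degenerate image and fail to increase $n_H$). One should also be careful to note that the image edge $\varphi(u)\varphi(v)$ might already belong to $G_i'$, which is handled by the final case split rather than by invoking the creation of a new copy.
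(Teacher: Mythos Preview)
Your proof is correct and follows essentially the same approach as the paper: induction on $i$, with the inductive step transporting a copy of $H$ minus an edge through the injective map $\varphi$ to witness that $\varphi(u)\varphi(v)\in E(G'_{i+1})$. The paper's version is slightly terser (it does not spell out the case split on whether $\varphi(u)\varphi(v)$ already lies in $E(G'_i)$, absorbing it into ``by definition of the $H$-process''), but the argument is the same.
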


    \begin{proof}
    The claim holds for $i=0$ because $G_0 = G$, $G'_0 = G'$.
    Let $i\geq 0$ and suppose that $\varphi \in \mathrm{Hom}(G_{i-1}, G'_{i-1})$.
    Let $e\in E(G_i)\setminus E(G_{i-1})$.
    Therefore  there exists a copy $H_i \subseteq G_i$ of $H$ in $G_i$ such that  $H_i - e \subseteq G_{i-1}$.
    We have $\varphi(H_i) - \varphi(e) = \varphi(H_i-e)$ because $\varphi$ is injective, and $\varphi(H_i-e) \subseteq G'_{i-1}$ since $\varphi \in \mathrm{Hom}(G_{i-1}, G'_{i-1})$.
    Thus, $\varphi(e) \in E(G'_i)$ by definition of the $H$-process on $G'$.
    \end{proof}

Two immediate consequences of Observation \ref{obs:hom} are that $G \subseteq G'$ implies $G_i \subseteq G'_i$ for all $i\geq 0$, and that $\mathrm{Aut}(G_i) \subseteq \mathrm{Aut}(G_{i+1})$,  for all $i\geq 0$.

\section{The cycle process} \label{sec:cycle process}
In this section, we collect some simple observations and lemmas about the $C_k$-process. 
Firstly, we  reduce the running times on disconnected $G$ to running times on connected starting graphs.

	\begin{obs}\label{obs:connectivity}
    Let $G$ be a graph with connected components $G^{(1)},\ldots,G^{(s)}$, and let $(G_i)_{i\geq 0}$ be its $C_k$-process.
    Then $G_i= G^{(1)}_i \cup\ldots\cup G^{(s)}_i$, and hence
        \[ 
        \final{G}_{C_k} = \final{G^{(1)}}_{C_k} \cup\ldots\cup \final{G^{(s)}}_{C_k}, \mbox{ and }    
        \tau_{C_k}(G) = \max \left\lbrace \tau_{C_k}(G^{(1)}) ,\ldots, \tau_{C_k}(G^{(s)}) \right\rbrace.
        \]
    \end{obs}
    
    \begin{proof}
    Suppose that at some step in the process the number of components decreases.
    Take the smallest $i$ for which there exists an edge $e\in E(G_i)$ whose endpoints lie in distinct components of $G$.
    At time $i-1$ there must be path of length $k-1$ between the endpoints of $e$, a contradiction.
    \end{proof}
    
Any component with less than $k$ vertices is $C_k$-stable and thus does not affect the process.
Therefore,
    \begin{linenomath} \begin{equation}\label{eq:connectivity}
    M_{C_k}(n) = \max \{ \tau_{C_k}(G) : G \text{ connected}, k\leq v(G) \leq n \}.
    \end{equation} \end{linenomath} 
For even $k$ another graph property that is preserved throughout the process is bipartiteness.

    \begin{lem}\label{lem:cycles_bipartiteness}
    Let $4\leq k\in 2\mathbb{N}$. If $G$ is a bipartite graph with partite sets $X,Y\subseteq V(G)$, so is $\final{G}_{C_k}$.
    \end{lem}
    
    \begin{proof}
    Let $(G_i)_{i\geq 0}$ be the $C_k$-process on $G$, and suppose for a contradiction that the final graph was not bipartite.
    Pick the smallest $i$ for which $G_i$ contains an edge $e$ whose endpoints lie in the same part.
    Then there exists a path of length $k-1$ between the endpoints of $e$ at time $i-1$, a contradiction as $k-1$ is odd.
    \end{proof}

Next we  show that if a graph $G$ is not $C_k$-stable, then cycles will quickly appear everywhere.  
    
    \begin{lem}\label{lem:cycles_cycleattimetwo}
    Let $ G$ be a connected graph with $\tau_{C_k}( G) \geq 1$. Then in the $C_k$-process on $ G$ every vertex is contained in a $k$-cycle at time $2$.
    \end{lem}

    \begin{proof}
    Let $( G_i)_{i\geq 0}$ be the $C_k$-process on $ G$. 
    Since $\tau_{C_k}( G) \neq 0$, there exists a $k$-cycle $C$ in $ G_1$.
    Let $x \in V( G)\setminus V(C)$, and let $Q$ be a shortest path from $x$ to $V(C)$ in $ G_1$.
    If $Q$ has length at least $k-1$ the first $k$ vertices of $Q$ starting from $x$ form a path of length $k-1$ with endpoint $x$. 
    If the length of $Q$ is smaller than $k-1$ we can extend $Q$ to a path of length $k-1$ using vertices along $C$.
    In either case, the vertices of this path of length $k-1$, one of which is $x$, form a $k$-cycle in $ G_2$.
    \end{proof}

Our next result treats the case that the starting graph is complete bipartite with an extra edge. 

    \begin{lem}\label{lem:cycles_completebipartitetoclique}
    Let $k\geq 3$, and let $z,z' \in V(K_{\floor{k/2},\ceil{k/2}})$ be vertices from the same partite set of $K_{\floor{k/2},\ceil{k/2}}$.
    Then $\tau_{C_k}(K_{\floor{k/2},\ceil{k/2}} + \{ zz' \}) \leq 2$ and $\final{K_{\floor{k/2},\ceil{k/2}}+\{ zz' \}}_{C_k} = K_k$.
    \end{lem}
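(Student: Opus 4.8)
The plan is to reduce everything to the existence of Hamilton paths, using that $G_0 := K_{\floor{k/2},\ceil{k/2}} + \{zz'\}$ has exactly $k$ vertices. Let $A$ be the partite set containing $z$ and $z'$, with $|A| = \alpha$ and $|B| = \beta$, so that $\alpha + \beta = k$ and $|\alpha - \beta| \le 1$. First I would record a general fact: in the $C_k$-process on \emph{any} $k$-vertex graph $\Gamma$, a non-edge $e$ of $\Gamma$ lies in $\Gamma_1$ if and only if $\Gamma$ has a Hamilton path between the two endpoints of $e$. Indeed, a copy of $C_k$ in the $k$-vertex graph $\Gamma + e$ is a Hamilton cycle; any such copy not already present in $\Gamma$ must use $e$; and deleting $e$ from it leaves precisely a Hamilton path of $\Gamma$ joining the endpoints of $e$ (and conversely such a path plus $e$ is a fresh copy of $C_k$). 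So $G_1$ and then $G_2$ are determined by deciding which pairs of vertices are joined by a Hamilton path in $G_0$, respectively in $G_1$.

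To determine $G_1$ I would analyse the Hamilton paths of $G_0$ by an edge-counting argument. If $P$ is a Hamilton path of $G_0$ joining two vertices $u,v$ in the same part, then every edge of $P$ except possibly $zz'$ joins $A$ to $B$; counting the $P$-degrees inside $A$ (equivalently inside $B$) in two ways shows that $P$ must use the edge $zz'$ exactly $\alpha-\beta-1$ times if $u,v\in A\setminus\{z,z'\}$ and exactly $\alpha-\beta+1$ times if $u,v\in B$. Since $zz'$ can be used $0$ or $1$ times and $|\alpha-\beta|\le 1$, it follows that a Hamilton path between two vertices of $B$ exists only when $\alpha\le\beta$, and one between two vertices of $A\setminus\{z,z'\}$ only when $\alpha=\beta+1$; and in each permitted case such a path genuinely exists (when $\alpha=\beta+1$ it already lies in $K_{\alpha,\beta}$, since $A$ is then the larger part; when $\alpha\le\beta$ one writes it down explicitly, splitting at $zz'$ into two alternating bipartite subpaths in the case $\alpha=\beta$). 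Hence after one step $G_1$ is the complete graph on $V(G_0)$ with all edges inside one part deleted: the clique part is $A$ if $\alpha=\beta+1$ and is $B$ otherwise, it always has $\ceil{k/2}$ vertices, and the remaining $\floor{k/2}$-element part $S$ spans no edges apart from $zz'$ in the cases where $z,z'\in S$ (that is, $\alpha\le\beta$).

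To finish, I would show $G_2=K_k$. By the reduction it suffices to exhibit, for each missing edge $ss'$ of $G_1$ — i.e.\ each pair inside $S$ other than $\{z,z'\}$ — a Hamilton path of $G_1$ joining $s$ and $s'$. Since the complementary part $T$ has $\ceil{k/2}\ge|S|$ vertices and spans a clique, one alternates $s,t_1,s_1,t_2,\dots$ between $S$ and $T$ until $S$ is exhausted, then walks through the remaining vertices of $T$ along clique edges before ending at $s'$, rerouting the path through the edge $zz'$ near whichever end happens to be $z$ or $z'$. Consequently every non-edge of $G_1$ is added, $G_2=K_k$ is $C_k$-stable, so $\tau_{C_k}(G_0)\le 2$ and $\final{G_0}_{C_k}=K_k$. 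The degenerate small cases only improve the bound: if $\floor{k/2}\le 1$ then already $G_0=K_3$, and if $\floor{k/2}=2$ then $S=\{z,z'\}$ has no missing edge, so $G_1=K_k$ and $\tau_{C_k}(G_0)\le 1$.

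The main (if essentially routine) obstacle I anticipate is the bookkeeping in the second paragraph: getting the edge-counting identity exactly right and, more importantly, correctly tracking which part becomes a clique after the first step — this flips with the parity of $k$ and with whether $z,z'$ sit in the larger or the smaller part, and it is mildly counterintuitive that for odd $k$ with $z,z'$ in the larger part it is that very part which turns into the clique. The Hamilton-path constructions themselves are explicit, but one has to check that the index ranges stay valid in the boundary cases $\alpha=\beta$, $|\alpha-\beta|=1$, and small $\floor{k/2}$.
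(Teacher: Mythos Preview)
Your argument is correct and follows the same overall strategy as the paper: reduce to Hamilton paths in a $k$-vertex graph, show that one partite set becomes a clique after one step, and then that the remaining part fills in after a second step. The edge-counting identity $e_A=\alpha-\beta+1-(\text{number of endpoints in }A)$ is exactly the right tool, and your description of $G_1$ is accurate (the ``rerouting through $zz'$'' in the step-2 construction is not actually needed, since the $T$-clique plus all cross edges already give a Hamilton path $s,t_1,s_1,\dots,s_{m-1},t_m,t_{m+1},\dots,t_M,s'$).

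The paper's execution is a little leaner in two places you may find clarifying. First, it splits by the parity of $k$ rather than by the three cases $\alpha=\beta\pm1,\ \alpha=\beta$: for odd $k$ the larger part $X$ becomes a clique using only Hamilton paths of $K_{\floor{k/2},\ceil{k/2}}$ itself, so the extra edge $zz'$ is irrelevant and the ``counterintuitive'' case you flag simply disappears. Second, for the second step in the odd case the paper avoids constructing a path directly: once $X$ is a clique, for any fixed $x\in X$ the sets $X\setminus\{x\}$ and $Y\cup\{x\}$ form a copy of $K_{\floor{k/2},\ceil{k/2}}$, so the step-1 argument applied again makes $Y\cup\{x\}$ a clique at time $2$. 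Your approach trades these shortcuts for a uniform description of $G_1$, which is also fine.
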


    \begin{proof}
    Let $ G := K_{\floor{k/2},\ceil{k/2}} + \{ zz' \}$ and denote the partite sets of $K_{\floor{k/2},\ceil{k/2}}$ by $X$ and $Y$ such that $|X| = \ceil{k/2}$ and $|Y| = \floor{k/2}$.
    If $k$ is odd, then for any two distinct $x,x'\in X$ we can find a Hamilton path, which has length $k-1$, from $x$ to $x'$ in $K_{\floor{k/2},\ceil{k/2}}$.
    Thus $X$ is a clique after one step in the $C_k$-process on $ G$.
    At time $1$, $X \setminus \{ x \}$ and $Y\cup \{ x \}$ are partite sets of a complete bipartite graph of size $\floor{k/2}$ and $\ceil{k/2}$, respectively.
    Therefore $Y \cup \{ x \}$ is a clique at time $2$.
    This shows the claim for odd $k$.
    Now assume that $k$ is even, in particular, $k \geq 4$ so both $|X|\geq 2$ and $|Y|\geq 2$.
    Since $|X| = |Y|$ we may further assume that $z,z'\in X$.
    For any distinct $y,y'\in Y$ we can pick a Hamilton path from $y$ to $z$ in the complete bipartite   graph $ G-y'-z'$ and extend that path to a $yy'$-path of length $k-1$ in $ G$ by $zz'$ and $z'y'$.
    Then $Y$ must be a clique at time $1$.
    Analogous arguments show that $X$ is a clique after one more step and hence the claim follows.
    \end{proof}

Finally we categorise the possible final graphs for connected starting graphs that are not $C_k$-stable.

    \begin{lem}\label{lem:cycles_smalldistance}
    Let $ G$ be a connected graph of order at least $k+1$ which contains a copy of $C_k$. The final graph $\final{ G}_{C_k}$ is a clique if $k$ is odd or $ G$ is non-bipartite, and a complete bipartite graph if $k$ is even and $ G$ is bipartite.
    \end{lem}

    \begin{proof}
    In $\final{ G}_{C_k}$ the endpoints of any path of length $k-1$ are adjacent.
    Therefore the shortest path between any two vertices has length less than $k-1$.
    Choose vertices $v_j$, $j\in [0,k-1]$, in $ G$ that form a $k$-cycle $C$ with edges $v_jv_{j+1}$. Here and for the rest of this proof addition and subtraction in the subscript are always performed modulo $k-1$.
    Every $x\in V( G)\setminus\{ v_0,\ldots,v_{k-1} \}$ has a neighbour on $C$ in $\final{ G}_{C_k}$  because a shortest path from $x$ to $C$ in $ G$ can always be extended to a path of length $k-1$ by vertices of $C$.
    If $xv_j \in E(\final{ G}_{C_k})$ then $xv_jv_{j-1}\ldots v_{j+2}$ is a path of length $k-1$ so $xv_{j+2} \in E(\final{ G}_{C_k})$.
    In the case that $k$ is odd the above implies that every vertex of $C$ is adjacent to every other vertex of $ G$ in $\final{ G}_{C_k}$.
    Thus for any two distinct vertices $x,y\in V(G)$ we can find a $k$-cycle containing $x$ but not $y$.
    Indeed if both $x$ and $y$ lie on $C$ we can replace $y$ by a vertex not on $C$ to obtain the desired cycle.
    In the case that neither $x$ nor $y$ is a vertex of $C$ we can replace an arbitrary vertex of $C$ by $x$.
    Repeating the above argument for such a cycle gives $xy \in E(\final{ G}_{C_k})$.
    
    Now assume that $k$ is even.
    Let 
        \[
        X := \{ v_j : j\equiv 0 \mod 2 \}  \qquad,\qquad Y :=  \{ v_j : j\equiv 1 \mod 2 \}.
        \]
    Then every vertex outside $C$ is adjacent in $\final{{G}}_{C_k}$ to all vertices in $X$ or all vertices in $Y$.    Define
        \[
        X' := \left\lbrace z \in V( G)\setminus V(C) : Y \subseteq N_{\final{ G}_{C_k}}(z) \right\rbrace 
        \qquad,\qquad 
        Y' := \left\lbrace z \in V( G)\setminus V(C) : X \subseteq N_{\final{ G}_{C_k}}(z) \right\rbrace .
        \]
    One of these two sets, say $X'$, must be non-empty.
    For any $x\in X'$, $y\in Y'$,     $yv_0v_1\ldots v_{k-3} x$ is an $xy$-path of length $k-1$ in $\final{ G}_{C_k}$.
    Furthermore for any $j, j' \in [0,k-1]$, with $v_j\in X$, $v_{j'}\in Y\setminus\{v_{j-1},v_{j+1}\}$ and any $x\in X'$
        \[
        v_{j'}v_{j'+1}\ldots v_{j-1} x  v_{j'-2}\ldots v_{j+1}v_j
        \]
    is an $v_jv_{j'}$-path of length $k-1$.
    Therefore $\final{ G}_{C_k}$ contains a complete bipartite graph whose partite sets are $X\cup X'$ and $Y\cup Y'$.
    If $ G$ is bipartite we are done by Lemma \ref{lem:cycles_bipartiteness}.
    Otherwise the claim follows from Lemma \ref{lem:cycles_completebipartitetoclique}.
    \end{proof}

\section{Proof outline}\label{sec:cycles_outline}
We begin by discussing the leading term $\log_{k-1}(n)$ in both the even and odd case of Theorem \ref{thm:cycles}. This logarithmic behaviour of $M_{C_k}(n)$ is a consequence of the  following key lemma  which essentially shows a decrease of the diameter by a factor of $k-1$ in each step of the $C_k$-process. 

    \begin{lem}\label{lem:cycles_distance}
    Let $(G_i)_{i\geq0}$ be the $C_k$-process on a connected graph $G=G_0$, and let $x,y\in V(G)$. 
    For each $i\geq 1$, the distance $\dist_{G_i}(x,y)$ satisfies
        \begin{equation} \label{eq:distance decrease}
        \frac{\dist_{G_0}(x,y)}{(k-1)^i}\leq  \dist_{G_i}(x,y) \leq \floor*{\frac{\dist_{G_{0}}(x,y)}{(k-1)^i}} + k-2.         
        \end{equation}
    When $\dist_{G_0}(x,y)$ is a multiple of $(k-1)^i$ the upper bound can be improved to
        \begin{linenomath} \begin{equation}\label{eq:gen_improved}
        \dist_{G_i}(x,y) \leq \frac{\dist_{G_{0}}(x,y)}{(k-1)^i}.
        \end{equation} \end{linenomath} 
    \end{lem}
    
    \begin{proof}
    Observe that for any edge $e\in E(G_i)\setminus E(G_{i-1})$ one can find a path of length $k-1$ between its endpoints in $G_{i-1}$.
    Given a shortest $xy$-path in $G_i$, replacing every edge on the path which is not present at time $i-1$ by a suitable path of length $k-1$ yields an $xy$-walk of length at most $(k-1)\cdot\dist_{G_i}(x,y)$ in $G_{i-1}$.
    From this we deduce that $\dist_{G_{i-1}}(x,y) \leq (k-1) \dist_{G_i}(x,y)$ and thus $\dist_{G_0}(x,y) \leq (k-1)^i \dist_{G_i}(x,y)$, which gives the lower bound in \eqref{eq:distance decrease}. 
    To obtain the upper bound on $\dist_{G_i}(x,y)$, write $\dist_{G_{i-1}}(x,y) = q\cdot (k-1) +r$ for suitable $q,r\in\NN_0$, $0\leq r \leq k-2$, and choose a path $u_0\ldots u_{q(k-1)+r}$ from $x$ to $y$ in $G_{i-1}$. In $G_i$, $u_0u_{k-1}\ldots u_{q(k-1)}u_{q(k-1)+1}\ldots u_{q(k-1)+r}$ is a path of length $q+r$ from $x$ to $y$.
    Since $r\leq k-2$,  we obtain
        \begin{linenomath} \begin{equation}\label{eq:gen_dist}
        \dist_{G_i}(x,y) \leq q+r  \leq \frac{q\cdot(k-1)+r-(k-2)}{k-1} + k-2 =\frac{\dist_{G_{i-1}}(x,y)-(k-2)}{k-1} + k-2.
        \end{equation} \end{linenomath} 
    We can bound the left hand side by just $q$ whenever $\dist_{G_i}(x,y)$ is divisible by $k-1$.
    An inductive application of \eqref{eq:gen_dist} yields the claim. 
    \end{proof}

Lemma \ref{lem:cycles_distance} (as well as some of the simple results from Section \ref{sec:cycle process}) already suffices to establish that $ \log_{k-1}(n)-1\leq M_{C_k}(n)\leq \log_{k-1}(n)+c_k$ for some $c_k>0$. Indeed, for the lower bound, one can consider the path $P_n$ as the starting graph and use Lemma \ref{lem:cycles_smalldistance} and the lower bound in \eqref{eq:distance decrease} to exhibit an edge that is eventually added but not until at least $\log_{k-1}(n)-1$ steps have passed.  For the upper bound, one can use 
\eqref{eq:connectivity} and the upper bound in \eqref{eq:distance decrease} to reduce to the situation where all distances in the graph $G_i$ are at most $k-1$ after $\log_{k-1}(n)$ steps. Then using Lemma \ref{lem:cycles_smalldistance} on appropriate subgraphs of $G_i$ will show that after another $c_k$ steps, the process will terminate, for some constant $c_k$ dependent only on $k$. 

The main contribution of this paper is getting the exact expressions for the maximum running times for both even and odd cycles. 
We split the proof of Theorem \ref{thm:cycles} into an upper bound part,  and a lower bound part. The upper bounds will be established by the following theorem. 

    \begin{thm}[Upper bound part]\label{thm:strategy_upperbound}
    Let $k\geq 3$, and let $G$ be a connected graph on at least $k+1$ and at most $n$ vertices with $C_k$-process $(G_i)_{i\geq 0}$ such that $\final{G}_{C_k} \neq G$.
    Define
        \begin{equation}\label{eq:defofr}
        r = r(n,k) := \begin{cases}
        \ceil*{\log_{k-1} (n+k^2-4k+2)} & \mbox{ if } k \mbox{ is odd}; \\
        \ceil*{\log_{k-1}\left(2n+k^2-5k\right)} & \mbox{ if } k \mbox{ is even}.
        \end{cases}
        \end{equation}
    If $n$ is sufficiently large the following hold:
        \begin{enumerate}[label=(\roman*)]
        \item \label{upperpart1} If $k$ is odd, then $xy \in E(G_r)$ for every distinct $x,y\in V(G)$.
        \item \label{upperpart2} If $k$ is even and $G$ bipartite with parts $X,Y\subseteq V(G)$ then $xy\in E(G_r)$ for any $x\in X$, $y\in Y$.
        \item \label{upperpart3} For even $k$ and non-bipartite $G$,  we have $xy \in E(G_r)$ for any distinct $x,y\in V(G)$.
        \end{enumerate}
    \end{thm}

We remark that by the definition of $r$, \eqref{eq:Frobenius} and \eqref{eq:FrobeniusDash}, $r$ is the unique natural number satisfying
    \begin{equation}\label{eq:rF}
    (k-1)^{r-1} - F(k-2,k) \ \leq \ n-1 \ < \  (k-1)^r - F(k-2,k),
    \end{equation}
when $k$ is odd.
Likewise
    \begin{equation}\label{eq:rF'}
    \frac{(k-1)^{r-1}-(k-1)}{2} - F'(k-2,k) + 2 \leq n < \frac{(k-1)^r-(k-1)}{2} - F'(k-2,k) + 2,
    \end{equation}
when $k$ is even. Theorem \ref{thm:strategy_upperbound} is proven in Section \ref{sec:cycles_upper} and follows from a delicate analysis. For parts \ref{upperpart1} and \ref{upperpart2}, we will identify an appropriate $xy$-path and deduce the result by considering cycle processes on paths. For \ref{upperpart3}, which is the most challenging, this will no longer work. Using subpaths  can in fact get close to the result, even showing $x$ and $y$ are adjacent after just $r+1$ steps. However, to get our exact result we need to take a more refined look at the exact edges that are added and at what time, in order to have $x$ and $y$ adjacent at time $r$. This leads us to consider walks instead of paths, which locally behave like paths with respect to the process.

To obtain a lower bound of the form $M_{C_k}(n) \geq r$ we need to specify a starting graph $G$ and an edge $e\in \binom{V(G)}2$ such that $e$ is present at time $r$ but not at time $r-1$.
In view of Theorem \ref{thm:strategy_upperbound} it suffices to give a pair of vertices (from different partite sets if $G$ is bipartite and $k$ is even) that are not adjacent at time $r-1$. This is the content of the following theorem.
    
    \begin{thm}[Lower bound part]\label{thm:strategy_lowerbound}
    Let $k\geq 3$, and let $G$ be a graph with $C_k$-process $(G_i)_{i\geq 0}$. Define $r$ as in \eqref{eq:defofr}, and set
        \begin{equation}\label{eq:defofell}
        \ell = \ell(n,k) := \frac{(k-1)^{r-1}-(k-1)}{2} - F'(k-2,k) - 1
        \end{equation}
    when $k$ is even.
    Then the following hold for $n$ sufficiently large:
        \begin{enumerate}
        \item \label{lowerpart1} If $k$ is odd and $G=P_n$, then $\{ 0, (k-1)^{r-1}-F(k-2,k) \} \notin E(G_i)$ for $i<r$.
        \item \label{lowerpart2} If $k$ is even and $G=P^\Delta$ (see Figure \ref{fig:pathtriangle}) on $\ell+3\leq n$ vertices then for the vertices $v_\ell,w_\ell\in V(P^\Delta)$ we have that $\{ v_\ell, w_{\ell}\} \notin E(G_i)$ for $i<r$.
        \end{enumerate}
    \end{thm}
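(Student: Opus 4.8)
The plan is to prove both parts by passing to a suitable infinite host graph, tracking through the $C_k$-process which pairs of vertices have become adjacent, and comparing the displacements so produced with a numerical semigroup. The mechanism we use is that an edge $uv$ is added at step $i$ exactly when $uv\notin E(G_{i-1})$ and $u,v$ are joined in $G_{i-1}$ by a path of length exactly $k-1$, i.e.\ by a self-avoiding walk with $k-1$ steps; since the relevant non-edges persist once absent, it suffices in each part to rule the offending edge out at time $r-1$. For part \ref{lowerpart1}, $P_n$ embeds as a subgraph into the two-way infinite path $P_\ZZ$ via $j\mapsto j$, so by Observation \ref{obs:hom} it is enough to show $\{0,(k-1)^{r-1}-F(k-2,k)\}$ is not an edge at time $r-1$ of the $C_k$-process on $P_\ZZ$. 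That process is translation-invariant, so for each $i$ let $S_i\subseteq\NN$ consist of those $s$ for which $\{x,x+s\}$ is an edge at time $i$; then $S_0=\{1\}$, $S_i$ equals $S_{i-1}$ together with the set of net displacements of self-avoiding $(k-1)$-step walks all of whose step lengths lie in $S_{i-1}$, and a short induction gives $\max S_i=(k-1)^i$ (the upper bound is clear and the increasing walk with all steps $(k-1)^{i-1}$ witnesses the lower bound).

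The heart of the matter is the claim, proved by induction on $i$, that whenever $s\in S_i$ has \emph{deficiency} $(k-1)^i-s$ at most $F(k-2,k)$, then $(k-1)^i-s$ lies in the numerical semigroup $\langle k-2,k\rangle$ generated by $k-2$ and $k$. Granting this, an offending edge at time $r-1$ would put $(k-1)^{r-1}-F(k-2,k)$ in $S_{r-1}$, whose deficiency is $F(k-2,k)$, forcing $F(k-2,k)\in\langle k-2,k\rangle$ and contradicting the definition of the Frobenius number (for $k=3$ the Frobenius number is negative and one simply notes $s>\max S_{r-1}$). The claim is trivial for $i\le1$. For $i\ge2$, if $s\in S_{i-1}$ then, as $s\le(k-1)^{i-1}$, its deficiency is at least $(k-2)(k-1)^{i-1}>F(k-2,k)$, so there is nothing to check; otherwise $s=\sum_{m=1}^{k-1}j_m$ for a self-avoiding walk with steps $j_m$, $|j_m|\in S_{i-1}$, and setting $c_m:=(k-1)^{i-1}-j_m$ we have $(k-1)^i-s=\sum_m c_m$ with each $c_m\ge0$ and hence each $c_m\le F(k-2,k)$. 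A forward step has $c_m=(k-1)^{i-1}-|j_m|\in\langle k-2,k\rangle$ by the inductive hypothesis applied to $|j_m|\in S_{i-1}$; a backward step has $c_m\ge(k-1)^{i-1}+1$, which exceeds $F(k-2,k)$ once $i\ge3$, whereas at the single remaining level $i=2$ a backward step contributes $k$ or $2(k-1)=(k-2)+k$. Either way all $c_m\in\langle k-2,k\rangle$, hence so is their sum.

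Part \ref{lowerpart2} follows the same template, with $P^\Delta$ embedded as a subgraph of the two-way infinite host obtained from $P_\ZZ$ by attaching the triangle, and Observation \ref{obs:hom} reducing us to showing $v_\ell$ and $w_\ell$ stay non-adjacent through time $r-1$ there. The new ingredient is parity: $C_k$ is bipartite for even $k$, and the triangle is the only odd cycle of the host, so every displacement created without passing through the triangle is odd; joining two vertices at even displacement therefore requires routing through the triangle, which flips parity at bounded cost. Redoing the per-step bookkeeping, the relevant arithmetic object becomes the sub-semigroup of even members of $\langle k-2,k\rangle$, namely $2\langle\tfrac{k-2}{2},\tfrac{k}{2}\rangle$, whose largest even gap is $F'(k-2,k)=2F(\tfrac{k-2}{2},\tfrac{k}{2})$ by \eqref{eq:FrobeniusDash}; the value $\ell$ from \eqref{eq:defofell} is chosen precisely so that the displacement between $v_\ell$ and $w_\ell$ equals $(k-1)^{r-1}$ minus this largest even gap, and the same contradiction with the Frobenius number shows the pair is not joined before step $r$.

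I expect the main obstacle to be the treatment of backward (non-monotone) steps in the induction for $S_i$: a priori they could yield displacements within $F(k-2,k)$ of $(k-1)^i$ whose deficiency escapes $\langle k-2,k\rangle$, and excluding this cleanly rests on the estimate $(k-1)^{i-1}>F(k-2,k)$ for $i\ge3$ together with the identity $2(k-1)=(k-2)+k$ at level $i=2$. In the even case there is the further subtlety that attaching the triangle breaks translation invariance, so one must argue separately that near the triangle only a bounded amount of parity correction is ever available and that away from it the odd-displacement analysis of $P_\ZZ$ transfers essentially verbatim.
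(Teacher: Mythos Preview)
Your argument for part~\eqref{lowerpart1} is correct and follows essentially the paper's approach: the paper proves (Lemma~\ref{lem:gen_fromright}) that every edge at time $i$ has difference lying in $A_i = \{(k-1)^i - \alpha(k-2) - \beta k : \alpha,\beta \ge 0\}$, which is the unconditional version of your deficiency claim; your restricted version (only for deficiencies at most $F(k-2,k)$) suffices for the application, and your treatment of backward steps via $(k-1)^{i-1} > F(k-2,k)$ for $i\ge 3$ together with the identity $2(k-1)=(k-2)+k$ at $i=2$ is clean. The passage to $P_\ZZ$ via Observation~\ref{obs:hom} is a harmless simplification.

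For part~\eqref{lowerpart2}, however, your sketch hides a genuine gap. The assertion that ``the displacement between $v_\ell$ and $w_\ell$ equals $(k-1)^{r-1}$ minus the largest even gap'' is not correct: from \eqref{eq:defofell} the shortest odd $v_\ell w_\ell$-walk has length $2\ell+1 = (k-1)^{r-1} - (k-1) - 2F'(k-2,k) - 1$, so the naive deficiency is $(k-1)+2F'(k-2,k)+1$, not $F'(k-2,k)$, and no direct Frobenius contradiction is available. The actual obstruction is not a one-parameter deficiency but a \emph{two-sided} constraint. The paper shows (Lemma~\ref{lem:lower_structure}) that any even edge $u_ju_{j'}$ appearing at time $i$ with $j+j'$ large enough forces a simultaneous decomposition
\[
j = \lambda(k-1) - \alpha(k-2) - \beta k, \qquad j' = \mu(k-1) - \gamma(k-2) - \delta k,
\]
with $\lambda+\mu = (k-1)^{i-1}-1$ and $\alpha,\gamma \ge -1$. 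Applied with $j=j'=\ell$ at time $r-1$, the constraint $\lambda+\mu=(k-1)^{r-2}-1$ and a parity check force $\lambda$ strictly below $\tfrac{(k-1)^{r-2}-1}{2}$; substituting back then expresses $F'(k-2,k)$ as a non-negative combination of $k-2$ and $k$, yielding the contradiction. Proving this two-parameter structure in turn requires knowing that the $(k-1)$-path in $G_{i-1}$ realising the edge contains exactly \emph{one} even edge rather than three or more, for which one first needs an auxiliary bound (Lemma~\ref{lem:cycles_endpointofevenedge}) on how far from the triangle an even edge can reach at time $i$. None of this structure is recoverable from ``redoing the per-step bookkeeping'' in the translation-invariant style of part~\eqref{lowerpart1}; the loss of translation invariance at the triangle is not a bounded perturbation but dictates the whole shape of the argument.
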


    \begin{figure}[h]
    \centering
    \includegraphics[width=0.7\linewidth]{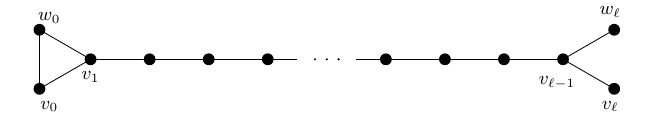}
    \caption{A visualisation of $P^\Delta$.}
    \label{fig:pathtriangle}
    \end{figure}

Theorem \ref{thm:strategy_lowerbound} is proven in Section \ref{sec:cycles_lower} via a careful analysis of when edges appear in the cycle process. As with the upper bounds, the odd case is easier and only requires an understanding of the process on paths. The even case is more involved with pairs of vertices in $P^\Delta$ behaving differently depending on their location in $P^\Delta$. In our analysis, we have to capture the idea of non-bipartiteness `spreading' from the triangle side of $P^\Delta$ along the path and track the time at which it reaches certain vertices. 

We finish this section by explicitly deducing Theorem \ref{thm:cycles} from Theorems \ref{thm:strategy_upperbound}  and \ref{thm:strategy_lowerbound}.

    \begin{proof}[Proof of Theorem \ref{thm:cycles}]
    We begin by proving the upper bound. We assume that $n$ is sufficiently large so that Theorem~\ref{thm:strategy_upperbound} holds and so that $r(n,k)\geq \binom{k}{2}$. Therefore we  have that any $k$-vertex graph $G$ stabilises in at most $r$ steps and so \eqref{eq:connectivity} tells us that we can restrict ourselves to connected starting graphs on at most $n$ and at least $k+1$ vertices.
    When $k$ is odd or the starting graph is non-bipartite, then the desired upper bound follows from parts \ref{upperpart1} and \ref{upperpart3} of Theorem \ref{thm:strategy_upperbound}, which state that by round $r$ our process reaches the complete graph, which is $C_k$-stable.
    If $k$ is even and the starting graph is bipartite with parts $X$ and $Y$, then part \ref{upperpart2} tells us that at time $r$ there is a complete bipartite graph between $X$ and $Y$, which by Lemma \ref{lem:cycles_bipartiteness} must be the final graph of the process.
    To obtain the lower bounds observe that $\ell \leq n-3$ for even $k$ by definition of $\ell$ and $r$ and that the edges specified in parts \eqref{lowerpart1} and \eqref{lowerpart2} of Theorem \ref{thm:strategy_lowerbound} are not present at time $r-1$, but will be added eventually by Theorem~\ref{thm:strategy_upperbound} (in fact in the next step). So the process is not finished after $r-1$ steps.
    \end{proof}

\section{Lower bounds}\label{sec:cycles_lower}
In this section we prove Theorem \ref{thm:strategy_lowerbound}, treating each part individually.
In both parts the following set will be convenient to get a handle on when an edge appears in the $C_k$-process on a path.

    \begin{linenomath} \begin{equation}\label{eq:defofAi}
    A_i := \left\lbrace (k-1)^i-\alpha\cdot(k-2)-\beta\cdot k : \alpha,\beta\in\NN_0 \right\rbrace
    \end{equation} \end{linenomath} 
    
Note that when $k$ is even, $A_i$ consists of odd numbers while for odd $k$ there is no restriction on the parity.
The $A_i$ form an increasing sequence with respect to inclusion because for any $\alpha, \beta\in \NN_0$,
    \[
    (k-1)^i - \alpha (k-2) - \beta k = (k-1)^{i+1} - (\alpha+(k-1)^i)\cdot (k-2) - \beta k \in A_{i+1}.
    \]

\subsection*{Proof of part \eqref{lowerpart1}}
Let $(P^i)_{i\geq 0}$ be the $C_k$-process on $P_n$ and recall that we chose $\{0,\ldots,n-1\}$ as the vertex set of $P_n$.

    \begin{lem}\label{lem:gen_fromright}
    If $xy\in E(P^i)$ for some $x,y \in V(P_n), i\geq 0$ then $y-x \in A_i$.
    \end{lem}
    
    \begin{proof}
    We prove the claim by induction on $i\geq 0$.

    $i=0$: By definition all edges in $P^0$ are of the form $\{ x, x+1 \}$.
    
    $i\geq 1$: Let $xy\in E(P^i)$. If $xy$ was already present at time $i-1$, the induction hypothesis and the inclusion $A_{i-1} \subseteq A_i$ give $y-x \in A_i$.
    Suppose $xy\notin E(P^{i-1})$. Let $v_0,\ldots,v_{k-1}$ be a path from $v_0 := x$ to $v_{k-1} := y$ in $P^{i-1}$. By the induction hypothesis there exist $\alpha_1,\ldots,\alpha_{k-1},\beta_1,\ldots,\beta_{k-1}$ such that  $v_j-v_{j-1} = (k-1)^{i-1}-\alpha_j\cdot(k-2) - \beta_j\cdot k$ for $j\in[k-1]$. Then
        \[
        y-x = \sum_{j=1}^{k-1} v_j-v_{j-1} = \sum_{j=1}^{k-1} \left( (k-1)^{i-1}-\alpha_j\cdot(k-2) - \beta_j\cdot k\right)  = (k-1)^i - \sum_{j=1}^{k-1} \alpha_j\cdot (k-2) -  \sum_{j=1}^{k-1} \beta_j\cdot k,
        \]
        completing the inductive step.
    \end{proof}
    
Lemma \ref{lem:gen_fromright} assures that whenever $d\in\NN$ is an integer that cannot be expressed as $d = \alpha (k-2) + \beta k$ for suitable $\alpha,\beta\in\NN_0$ then $(k-1)^i-d$ does not lie in $A_i$ and hence any edge $xy$ with $y-x = (k-1)^i-d$ cannot be present at time $i$.
Therefore the edge $\{ 0, (k-1)^{r-1}-F(k-2,k) \}$ is not present in $P^{r-1}$ by the definition of the Frobenius number $F(k-2,k)$ (see \eqref{eq:Frobenius}).
This shows   Theorem \ref{thm:strategy_lowerbound} part \eqref{lowerpart1}.

\subsection*{Proof of part \eqref{lowerpart2}}
To show part \eqref{lowerpart2} we recall the graph $P^\Delta$ defined by Figure \ref{fig:pathtriangle} and assume that $n$, and thus $\ell$, is sufficiently large so we do not run into degenerate cases, say $\ell\geq 3$. 
An important feature of the graph $P^\Delta$ for us is that it is a non-bipartite graph that maximises the length of a shortest odd walk between two vertices for fixed $n$.

Let $(P^{\Delta,i})_{i\geq 0}$ be the $C_k$-process on $P^\Delta$.
Recall that our goal is to show $v_\ell w_\ell \notin E(P^{\Delta,r-1})$.
To do so we will set up an analogue of Lemma \ref{lem:gen_fromright} for $P^{\Delta}$.
Call an edge $v_jv_{j'}$, $v_j w_{j'}$ or $w_jw_{j'}$ \emph{even} is $j-j'$ is even, and \emph{odd} if $j-j'$ is odd.
Lemma \ref{lem:cycles_analogueodd} below is the analogue of Lemma \ref{lem:gen_fromright} dealing with odd edges, while Lemma \ref{lem:lower_structure} deals with the even edges. Both rely on the following auxiliary statement and the assumption that $k$ is even:

    \begin{lem}\label{lem:cycles_endpointofevenedge}
    For every $i\geq 0$, the largest $j\in [\ell]$ such that $v_j$ or $w_j$ is an endpoint of an even edge in $P^{\Delta,i}$ is at most $(k-1)^i-1$.
    \end{lem}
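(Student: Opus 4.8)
The plan is to argue by induction on $i$, the key point being that $P^\Delta$ is bipartite apart from a single triangle located at ``position $0$'', so every parity defect must originate there and can spread outward by at most a factor of $k-1$ per step. Assign to each vertex $u$ of $P^\Delta$ a \emph{position} $\mathrm{pos}(u)\in\{0,1,\dots,\ell\}$, with $\mathrm{pos}(v_j)=\mathrm{pos}(w_j)=j$ and the three triangle vertices having position $0$; this is arranged so that every edge of $P^\Delta$ changes the position by at most $1$, whence $|\mathrm{pos}(x)-\mathrm{pos}(y)|\le\dist_{P^\Delta}(x,y)$ for all $x,y$. An edge $ab$ of any graph in the process is then even precisely when $\mathrm{pos}(a)\equiv\mathrm{pos}(b)\pmod 2$, consistently with the definition of even/odd edges above.

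For the base case $i=0$ one checks directly from the definition of $P^\Delta$ that the only even edges are the three triangle edges, all of whose endpoints have position $0$, so no $j\in[\ell]$ has $v_j$ or $w_j$ as an endpoint of an even edge, matching $(k-1)^0-1=0$. For the inductive step, assume the claim for $i-1\ge 0$ and let $j$ be maximal such that, say, $v_j$ is an endpoint of an even edge $e^\ast$ at time $i$. If $e^\ast\in E(P^{\Delta,i-1})$ we are done by the inductive hypothesis since $(k-1)^{i-1}-1\le(k-1)^i-1$. Otherwise fix a path $P=x_0x_1\cdots x_{k-1}$ of length $k-1$ in $P^{\Delta,i-1}$ from $x_0=v_j$ to the other endpoint $x_{k-1}$ of $e^\ast$. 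Since $e^\ast$ is even we have $\mathrm{pos}(x_0)\equiv\mathrm{pos}(x_{k-1})\pmod 2$, so the number of odd edges on $P$ is even; as $k-1$ is odd (here $k$ is even), $P$ must contain at least one even edge, say $x_tx_{t+1}$ with $0\le t\le k-2$.

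Now $x_t$ is an endpoint of an even edge at time $i-1$, so $\mathrm{pos}(x_t)\le(k-1)^{i-1}-1$ by the inductive hypothesis (this also covers the case that $x_t$ is a triangle vertex, of position $0$). By Lemma~\ref{lem:cycles_distance}, every edge $ab\in E(P^{\Delta,i-1})$ satisfies $\dist_{P^\Delta}(a,b)\le(k-1)^{i-1}$, hence $|\mathrm{pos}(a)-\mathrm{pos}(b)|\le(k-1)^{i-1}$; walking along the at most $k-2$ edges of $P$ joining $x_0$ to $x_t$ therefore gives
\[
j=\mathrm{pos}(x_0)\ \le\ \mathrm{pos}(x_t)+t(k-1)^{i-1}\ \le\ \left((k-1)^{i-1}-1\right)+(k-2)(k-1)^{i-1}\ =\ (k-1)^i-1,
\]
as required.

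The only genuinely delicate point is keeping the $-1$ in the bound: this forces us to route from $v_j$ to the \emph{nearer} endpoint of an even edge on $P$ (at most $k-2$ steps, not $k-1$) and to have the triangle sit exactly at position $0$, so that triangle vertices do not spoil the base of the induction. Beyond that, the argument only uses the parity bookkeeping for the number of even edges on $P$ and the routine fact that positions are $1$-Lipschitz along the edges of $P^\Delta$; I expect the former to be the step most in need of care when the precise layout of $P^\Delta$ from Figure~\ref{fig:pathtriangle} is pinned down.
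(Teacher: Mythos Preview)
Your proof is correct and follows essentially the same approach as the paper's: induct on $i$, note that a new even edge at time $i$ comes from a $(k-1)$-path in $P^{\Delta,i-1}$ which by parity must contain an even edge, apply the inductive hypothesis to an endpoint of that edge, and use Lemma~\ref{lem:cycles_distance} to bound the index change along the remaining at most $k-2$ steps. One small inaccuracy in your description of $P^\Delta$: the triangle is $v_0w_0v_1$ (not three vertices all at position $0$), so only the edge $v_0w_0$ is even at time $0$ --- but this is immaterial, since your base case conclusion and the $1$-Lipschitz property of the position function both hold for the actual structure.
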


    \begin{proof}
    The only even edge in $P^{\Delta,0}$ is $v_0w_0$ so the claim holds for $i=0$.
    Let $i\geq 1$ and suppose the claim holds for $i-1$.
    Since $(k-1)^i-1 > (k-1)^{i-1} -1$ it suffices to show that whenever $v_j$ or $w_j$ is the endpoint of an even edge in $E(P^{\Delta,i})\setminus E(P^{\Delta,i-1})$ one has $j \leq (k-1)^i-1$.
    Let $u_{j_0}u_{j_{k-1}}$ be an even edge in $E(P^{\Delta,i})\setminus E(P^{\Delta,i-1})$ and let $u_{j_0}\ldots u_{j_{k-1}}$ be a path in $P^{\Delta,i-1}$ such that $u_{j_t} \in \{ v_{j_t},w_{j_t} \}$ for $0 \leq t \leq k-1$.
    For parity reasons there exists at least one even edge on that path.
    Let $s\in [k-1]$ such that $j_s-j_{s-1} \equiv 0 \mod 2$.
    The first part of Lemma \ref{lem:cycles_distance} gives
    	\[
    	\dist_{P^{\Delta}}(u_{j_t}, u_{j_{t-1}}) \leq (k-1)^{i-1} \dist_{P^{\Delta,i-1}}(u_{j_t}, u_{j_{t-1}}) = (k-1)^{i-1}
    	\]
    for $s+1 \leq t \leq k-1$.
    In $P^\Delta$ we have $\dist_{P^{\Delta}}(u_{j_t}, u_{j_{t-1}}) = |j_t - j_{t-1}|$.
    Now the inductive hypothesis implies
    \[
    j_{k-1} = j_s + \sum_{t=s+1}^{k-1} j_t - j_{t-1} \leq (k-1)^{i-1} - 1 + (k-1-s) \cdot (k-1)^{i-1} \leq (k-1)^i - 1.
    \]\end{proof}

Recall the definition of $A_i$ in \eqref{eq:defofAi}.

    \begin{lem}\label{lem:cycles_analogueodd}
    Let $i \geq 1$ and $j,j'\in [\ell]$ with $j \not \equiv j' \mod 2$. 
    If $u_j\in\{v_j,w_j\}$, $u_{j'}\in\{v_{j'},w_{j'}\}$ and $u_ju_{j'}\in E(P^{\Delta,i})$, then $j-j' \in A_i$.
    \end{lem}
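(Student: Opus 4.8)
The plan is to mimic the inductive argument of Lemma~\ref{lem:gen_fromright}, but adapted to the graph $P^\Delta$ and restricted to \emph{odd} edges. Recall that $P^\Delta$ consists of a long path together with a short ``triangle-like'' gadget at one end producing the two vertices $v_j$, $w_j$; the key metric fact we will use repeatedly is that in $P^\Delta$ itself one has $\dist_{P^\Delta}(u_j, u_{j'}) = |j-j'|$ whenever $j\neq j'$, regardless of which of $v,w$ the labels $u$ refer to. The base case $i=1$ is handled exactly as before: every odd edge of $P^{\Delta,0}$ is of the form $\{u_j, u_{j+1}\}$ or $\{u_j, u_{j+k-1}\}$ (coming from an original path-edge or from a $k$-cycle closing up), and $\pm 1, \pm(k-1)\in A_1$ by the explicit expressions listed in the proof of Lemma~\ref{lem:gen_fromright}. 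One small point to check is that the triangle gadget does not introduce any other short odd edges; by the construction in Figure~\ref{fig:pathtriangle} the only extra edges are short, and their $j$-differences are again among $\{\pm 1, \pm(k-1)\}$ (or they are even, hence irrelevant here).

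For the inductive step, fix $i\geq 2$ and an odd edge $u_ju_{j'}\in E(P^{\Delta,i})$. If it was already present at time $i-1$ we are done by the induction hypothesis together with $A_{i-1}\subseteq A_i$. Otherwise there is a path $u_{j_0}\ldots u_{j_{k-1}}$ in $P^{\Delta,i-1}$ with $u_{j_0}=u_j$, $u_{j_{k-1}}=u_{j'}$. I would split the $k-1$ edges of this path into the odd ones and the even ones. For each \emph{odd} edge $u_{j_{t-1}}u_{j_t}$ on the path the induction hypothesis gives $j_t - j_{t-1}\in A_{i-1}$, i.e. $j_t-j_{t-1} = (k-1)^{i-1}-\alpha_t(k-2)-\beta_t k$ for suitable $\alpha_t,\beta_t\in\NN_0$ (up to sign — I will need to be a little careful about signs, just as in the base case, absorbing $\pm$ into the fact that $A_i$ is symmetric enough for small values, or equivalently running the argument on $j-j'$ directly). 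For each \emph{even} edge on the path, Lemma~\ref{lem:cycles_endpointofevenedge} forces both its endpoints to have index at most $(k-1)^{i-1}-1$, so the contribution of the even edges to $j-j'$ is some integer of absolute value at most $(k-1)^{i-1}-1$; since $j\not\equiv j'\bmod 2$ there is at least one even edge, and in fact an odd number of them, so after telescoping, the total even contribution is an integer $m$ with $|m|\leq (\text{number of even edges})\cdot((k-1)^{i-1}-1)$, which is at most $(k-2)(k-1)^{i-1}$, and this is exactly the kind of quantity that can be absorbed as $-\alpha(k-2)$ for an appropriate $\alpha$, possibly after also shifting a $(k-1)^{i-1}$ term. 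Writing everything out, $j-j' = \sum_{t}(j_t-j_{t-1})$ should collapse to $(k-1)^i - \alpha(k-2) - \beta k$ for some $\alpha,\beta\in\NN_0$, i.e. $j-j'\in A_i$.

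The main obstacle, and the place I expect to have to do genuine bookkeeping rather than routine algebra, is handling the even edges: unlike in the path case, where every edge contributes a term of the controlled form $(k-1)^{i-1}-\alpha(k-2)-\beta k$, an even edge at time $i-1$ has no such representation available from the induction hypothesis (the induction on odd edges says nothing about even ones). The fix is precisely Lemma~\ref{lem:cycles_endpointofevenedge}: it pins the even edges to the ``low-index'' region, so their combined $j$-shift is a bounded integer, and a bounded integer of absolute value $O((k-1)^{i-1})$ can always be rewritten in the form $c(k-1)^{i-1} - \alpha(k-2)$ (with $0\le c\le k-1$ and $\alpha\ge 0$) because $(k-2)$ and $1$... — more carefully, because the set $\{(k-1)^{i-1} - \alpha(k-2) : \alpha\ge 0\}$ already covers all integers $\le (k-1)^{i-1}$ that are $\equiv (k-1)^{i-1}\pmod{\gcd(\dots)}$, and by combining $k-1-(\text{number of even edges})$ leftover ``full'' terms $(k-1)^{i-1}$ from the odd edges we make up the total to $(k-1)^i$. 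I would organise this as: (1) telescoping identity for $j-j'$; (2) odd-edge terms contribute $(\#\text{odd edges})(k-1)^{i-1} - (k-2)\sum\alpha_t - k\sum\beta_t$; (3) even-edge terms contribute an integer of small absolute value, rewritten using Lemma~\ref{lem:cycles_endpointofevenedge} and the identity $(k-1)^{i-1} = (k-2)+1$-type manipulations as $(\#\text{even edges})(k-1)^{i-1} - (k-2)\cdot(\text{something} \ge 0) - k\cdot 0$; (4) add up, using $\#\text{odd}+\#\text{even} = k-1$, to land in $A_i$. Getting the parity/sign conventions exactly right in steps (2)--(3) is the only delicate part; the rest is the same telescoping as in Lemma~\ref{lem:gen_fromright}.
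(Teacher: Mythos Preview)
Your parity count is backwards, and this derails the whole plan. Since $k$ is even, the path creating $u_ju_{j'}$ has $k-1$ (odd) edges, and the telescoping sum $j-j' = \sum_t (j_t - j_{t-1})$ is odd; hence the number of \emph{odd} edges on the path is odd and the number of \emph{even} edges is \emph{even} (possibly zero), not ``at least one, and in fact an odd number''. This matters because your absorption step (3) proposes to rewrite each even-edge contribution as $(k-1)^{i-1} - \alpha(k-2)$ with $\alpha\ge 0$. But $(k-1)^{i-1}$ is odd and $(k-2)$ is even, so every number of that form is odd, while an even edge contributes an even integer $j_t-j_{t-1}$. So the rewriting you need is impossible on parity grounds; you cannot turn each even edge into one ``full'' term of the $A_{i-1}$ shape and then sum to $(k-1)^i$.

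The fix used in the paper is not to decompose the even contributions at all. When the path contains even edges (necessarily at least two), let $s$ be the position of the first one. Lemma~\ref{lem:cycles_endpointofevenedge} gives $j_s \le (k-1)^{i-1}-1$, and the $s\le k-3$ odd edges preceding it each satisfy $j_{t}-j_{t+1}\le \max A_{i-1}=(k-1)^{i-1}$ by induction. Hence
\[
j-j' \;\le\; j_0 \;=\; j_s + \sum_{t=0}^{s-1}(j_t-j_{t+1}) \;\le\; (k-1)^{i-1}-1 + (k-3)(k-1)^{i-1} \;<\; (k-1)^i - F'(k-2,k),
\]
and since $j-j'$ is odd, $(k-1)^i-(j-j')$ is an even integer exceeding $F'(k-2,k)$, hence a nonnegative $(k-2),k$-combination, i.e.\ $j-j'\in A_i$. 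Note this bound needs $i\ge 3$; the case $i=2$ has to be treated separately (the only even edges at time $1$ are the three forming a triangle on $v_0,w_0,v_{k-2}$, so at most two can occur and they are consecutive, which allows an explicit computation). Your outline has no separate $i=2$ case and no mechanism to replace it.
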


    \begin{proof}
    We induct on $i$ with $i\in\{ 1,2 \}$ being our base cases.

    $i=1$:
    Any path of length $k-1$ in $P^{\Delta, 0}$ whose endpoints form an odd edge  in $P^{\Delta,1}$  must not use $v_0w_0$ because of parity, and thus misses at least one of $v_0$,$w_0$.
    This implies $j-j' \in \{ -(k-1),-1,1,(k-1) \} \subseteq A_1$ (cf. base case of Lemma \ref{lem:gen_fromright}).

    $i=2$:
    If $u_ju_{j'}$ is present at time $1$ we are done because $A_1 \subseteq A_2$ and $j-j'\in A_1$ by the induction hypothesis.
    Suppose that the edge does not lie in $E(P^{\Delta,1})$.
    Let $Q = u_{j_0}\ldots u_{j_{k-1}}$ be a path in $P^{\Delta,1}$ with $j_0 = j'$, $j_{k-1} = j$ and $u_{j_t} \in \{ v_{j_t},w_{j_t} \}$ for $1 \leq t \leq k-2$.
    There has to be an even number of even edges in $Q$ because $j-j'$ is odd and $Q$ has odd length, and there cannot be more than two because the only even edges of $P^{\Delta,1}$ are $v_0w_0$, $v_0v_{k-2}$ and $w_0v_{k-2}$. 
    If all edges of $Q$ are odd we proceed as in the inductive step of Lemma \ref{lem:gen_fromright}.
    Otherwise there are precisely two even edges on $Q$.
    These two edges must share a common endpoint considering that the even edges in $P^{\Delta,1}$ form a triangle.
    Let $s\in [k-2]$ such that $u_{j_{s-1}}u_{j_s}$ and  $u_{j_s}u_{j_{s+1 }}$ are the even edges.
    We have either $j_{s-1} = j_{s+1} = 0$ or $\{ j_{s-1},j_{s+1} \} = \{ 0,k-2 \}$.
    For $t \in [k-1]\setminus\{ s,s+1 \}$, by induction choose $\alpha_t, \beta_t\in \NN_0$ such that $j_t - j_{t-1} = (k-1)- \alpha_t (k-2) - \beta_t k$. 
    This allows us to express $j_{k-1}-j_0$ as follows:
        \begin{linenomath}\begin{align*}
        j_{k-1} - j_0
        &= \sum_{t\in [k-1]\setminus\{ s,s+1 \}} j_t - j_{t-1} \; + \; j_s-j_{s-1} + j_{s+1} - j_s \\
        &= \sum_{t\in [k-1]\setminus\{ s,s+1 \}} \left((k-1)- \alpha_t (k-2) - \beta_t k\right) \; + \; j_{s+1} - j_{s-1} \\
        &= (k-3)\cdot (k-1)^1 - \sum_{t\in [k-1]\setminus\{ s,s+1 \}} \alpha_t (k-2) - \sum_{t\in [k-1]\setminus\{ s,s+1 \}} \beta_t k \; + \; j_{s+1} - j_{s-1} \\
        &= \begin{cases}
        (k-1)^2 - \sum_t \alpha_t (k-2) - \sum_t \beta_t k - 2(k-2) - k &,\text{ if } j_{s+1} - j_{s-1} = -(k-2); \\
        (k-1)^2 - \sum_t \alpha_t (k-2) - \sum_t \beta_t k - (k-2) - k &,\text{ if } j_{s+1} - j_{s-1} = 0; \\
        (k-1)^2 - \sum_t \alpha_t (k-2) - \sum_t \beta_t k - k &,\text{ if } j_{s+1} - j_{s-1} = k-2.
        \end{cases}
        \end{align*}\end{linenomath}
    Therefore $j_{k-1}-j_{0}=j-j'\in A_2$, as required.

    $i\geq 3$:
    We handle the case $u_ju_{j'}\in E(P^{\Delta,i-1})$ as before and so assume that $u_ju_{j'}$ is an odd edge not in $P^{\Delta,i-1}$.
    Let $u_{j_0}\ldots u_{j_{k-1}}$ be a $u_ju_{j'}$-path in $P^{\Delta,i-1}$ where $u_{j_t} \in \{ v_{j_t},w_{j_t} \}$ for $1 \leq t \leq k-2$, and let $J := \{ t \in [0,k-2]  : j_t \equiv j_{t+1} \mod 2 \}$.
    Since $j-j'$ and $k-1$ are odd, $|J|$ must be even.
    If $J$ is empty, that is, if $Q$ consists of odd edges we can again proceed as in Lemma \ref{lem:gen_fromright}.
    Suppose that $|J|\geq 2$ and let $s:=\min J$.
    Lemma \ref{lem:cycles_endpointofevenedge} yields $j_s \leq (k-1)^{i-1}-1$ while Lemma \ref{lem:cycles_distance} guarantees $j_t - j_{t+1} \leq (k-1)^{i-1}$ for $t\in [0,k-2]$.
    Therefore,
        \begin{linenomath}\begin{align*}
        j - j' = j_0 - j_{k-1} &\leq j_0 \\
        &= j_s + \sum_{t=0}^{s-1} j_t - j_{t+1} \\
        &\leq (k-1)^{i-1}-1 + s \cdot (k-1)^{i-1} \\
        &\leq (k-1)^{i-1}-1 + (k-3) \cdot (k-1)^{i-1} \\
        &= (k-1)^i - (k-1)^{i-1} - 1 \\
        &< (k-1)^i - F'(k-2,k).
        \end{align*}\end{linenomath}

    The last inequality uses \eqref{eq:FrobeniusDash} and $i\geq 3$.
    We now have $j-j'\in A_i$ by the definition of Frobenius numbers and \eqref{eq:FrobeniusDash} and because $(k-1)^i$ and $j-j'$ are odd.
    \end{proof}

   \begin{lem}\label{lem:lower_structure}
    Let $1\leq i < r$, and let $j,j'\in\{ 0,\ldots,\ell \}$ such that $j \equiv j' \mod 2$ and 
    $j+j' \geq (k-1)^i-(k-1)-2\cdot F'(k-2,k)-2$. If $u_j\in\{v_j,w_j\}$, $u_{j'}\in\{v_{j'},w_{j'}\}$ and $u_ju_{j'} \in E(P^{\Delta,i})\setminus E(P^{\Delta,i-1})$, then there exist $\alpha,\gamma\in\ZZ_{\geq -1}$, $\beta,\delta,\lambda,\mu\in\NN_0$ with $\lambda+\mu = (k-1)^{i-1}-1$ such that
        \[
        j = \lambda (k-1) - \alpha (k-2) - \beta k \qquad\mbox{ and }\qquad j' = \mu (k-1) - \gamma (k-2) - \delta k.
        \]
    \end{lem}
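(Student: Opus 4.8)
The plan is to prove Lemma \ref{lem:lower_structure} by induction on $i$, mirroring the structure of the proofs of Lemmas \ref{lem:gen_fromright} and \ref{lem:cycles_analogueodd}, but now carefully bookkeeping \emph{both} endpoints of an even edge rather than just their difference. The key point is that the defining length $(k-1)^{i-1}-1$ appearing in the constraint $\lambda+\mu=(k-1)^{i-1}-1$ is exactly the bound from Lemma \ref{lem:cycles_endpointofevenedge}: an even edge $u_ju_{j'}$ created at time $i$ arises from a path $u_{j_0}\ldots u_{j_{k-1}}$ in $P^{\Delta,i-1}$ from $u_{j'}=u_{j_0}$ to $u_j=u_{j_{k-1}}$ which, for parity reasons, contains an \emph{odd} number of even edges, and (using the hypothesis on $j+j'$ being large together with Lemma \ref{lem:cycles_endpointofevenedge}) can contain exactly one. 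Split the path at that unique even edge $u_{j_{s-1}}u_{j_s}$; the portion from $u_{j_0}$ to $u_{j_{s-1}}$ and the portion from $u_{j_s}$ to $u_{j_{k-1}}$ consist entirely of odd edges, and the even edge itself is one of the three ``original'' even edges $v_0w_0$, $v_0v_{k-2}$, $w_0v_{k-2}$ by an analogue of Lemma \ref{lem:cycles_endpointofevenedge} (its endpoints have small index, so one endpoint has index $0$ and the other index $0$ or $k-2$).

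**Key steps.** First, handle the base case $i=1$: the only even edge in $E(P^{\Delta,1})\setminus E(P^{\Delta,0})$ are (among) $v_0w_0$, $v_0v_{k-2}$, $w_0v_{k-2}$, so $\{j,j'\}\subseteq\{0,k-2\}$ and one checks directly that $0 = 0\cdot(k-1)-(-1)(k-2)-k$ and $k-2 = 1\cdot(k-1)-0\cdot(k-2)-\dots$ (here $\lambda+\mu = (k-1)^0-1 = 0$, forcing $\lambda=\mu=0$, and indeed $k-2 = 0\cdot(k-1) + (k-2) - 0\cdot k$ with $\alpha=-1$, $0 = 0\cdot(k-1)-(-1)(k-2)-k$ with $\gamma=-1$, $\delta=1$, matching the $\ZZ_{\geq-1}$ ranges). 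Second, the inductive step: given the creating path $Q = u_{j_0}\ldots u_{j_{k-1}}$ in $P^{\Delta,i-1}$, argue via parity that $Q$ has an odd number of even edges, and via Lemma \ref{lem:cycles_endpointofevenedge} plus the $j+j'$ lower bound that it has exactly one, say $u_{j_{s-1}}u_{j_s}$. Third, apply Lemma \ref{lem:cycles_endpointofevenedge} to conclude $j_{s-1},j_s \leq (k-1)^{i-1}-1$, and because these two vertices lie on an even edge of $P^{\Delta,i-1}$, an analogue argument (or direct induction on the claim ``endpoints of even edges have one index $0$ and the other in $\{0,k-2\}$'') pins down $\{j_{s-1},j_s\}$. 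Fourth, write $j_{k-1}-j_s$ as a sum of $k-1-s$ terms each of the form $(k-1)^{i-1}-\alpha_t(k-2)-\beta_t k$ (odd edges, induction hypothesis applied as in Lemma \ref{lem:gen_fromright}), and likewise $j_{s-1}-j_0$ as a sum of $s-1$ such terms; combined with $j_s,j_{s-1}\in\{0,k-2\}$ and $j_{k-1}=j$, $j_0=j'$, collect coefficients to obtain $j = \lambda(k-1)-\alpha(k-2)-\beta k$ with $\lambda = k-1-s$ (or $k-1-s$ adjusted by whether the even edge endpoint contributes), $\mu = s-1$, and check $\lambda+\mu$ telescopes to $(k-1)^{i-1}-1$ after absorbing the ``$+1$'' or ``$+(k-2)$'' from $j_{s-1}, j_s \in \{0, k-2\}$ into $\alpha$ or $\gamma$ (explaining why $\alpha,\gamma$ are allowed to be $-1$).

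**Main obstacle.** The delicate part will be the fourth step: correctly distributing the ``slack'' coming from the even edge's endpoints (the values $0$ or $k-2$) between $\lambda,\mu$ on one hand and $\alpha,\beta,\gamma,\delta$ on the other, so that the constraint $\lambda+\mu=(k-1)^{i-1}-1$ holds \emph{exactly} — this is why the statement allows $\alpha,\gamma\in\ZZ_{\geq-1}$ rather than $\NN_0$. I expect one must track the three cases $\{j_{s-1},j_s\}\in\{\{0,0\},\{0,k-2\},\{k-2,0\}\}$ separately (as in the $i=2$ case of Lemma \ref{lem:cycles_analogueodd}), verifying in each that $j$ and $j'$ land in the claimed form. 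A secondary subtlety is ensuring the even edge is unique: this relies crucially on $j+j' \geq (k-1)^i-(k-1)-2F'(k-2,k)-2$, since two even edges on $Q$ would force (via Lemma \ref{lem:cycles_endpointofevenedge} applied to the minimal such index) an upper bound on $j+j'$ of roughly $(k-1)^i - 2(k-1)^{i-1}$, contradicting the hypothesis once $i<r$ and $n$ is large. Once uniqueness and the endpoint localisation are in hand, the rest is the same telescoping computation already rehearsed twice in the paper.
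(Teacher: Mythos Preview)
Your overall architecture (induction on $i$; show the creating path $Q$ in $P^{\Delta,i-1}$ has exactly one even edge using parity plus Lemma~\ref{lem:cycles_endpointofevenedge} against the lower bound on $j+j'$; then telescope the odd segments via Lemma~\ref{lem:cycles_analogueodd}) matches the paper. But your third step contains a genuine error that breaks the argument for $i\ge 3$.

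You assert that the unique even edge $u_{j_{s-1}}u_{j_s}$ on $Q$ has $\{j_{s-1},j_s\}\subseteq\{0,k-2\}$, even suggesting a ``direct induction on the claim `endpoints of even edges have one index $0$ and the other in $\{0,k-2\}$'{''}. That claim is false: already at time $2$ (take $k=4$, path $v_3v_0w_0w_1$) the even edge $v_3w_1$ has neither index in $\{0,2\}$. Lemma~\ref{lem:cycles_endpointofevenedge} only gives $j_{s-1},j_s\le (k-1)^{i-1}-1$, not that they lie in $\{0,k-2\}$. Consequently your fourth step cannot proceed as written, and indeed your proposed values $\lambda=k-1-s$, $\mu=s-1$ give $\lambda+\mu=k-2$, which equals $(k-1)^{i-1}-1$ only when $i=2$.

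What the paper does instead is apply the \emph{induction hypothesis of Lemma~\ref{lem:lower_structure} itself} to the even edge $u_{j_{s-1}}u_{j_s}$. This requires two extra verifications you do not mention: that $u_{j_{s-1}}u_{j_s}\in E(P^{\Delta,i-1})\setminus E(P^{\Delta,i-2})$ (it was not created earlier), and that $j_{s-1}+j_s\ge (k-1)^{i-1}-(k-1)-2F'(k-2,k)-2$. Both follow from telescoping the odd edges of $Q$ against the assumed lower bound on $j+j'$ together with Lemma~\ref{lem:cycles_endpointofevenedge}. The induction hypothesis then yields $\lambda^*,\mu^*$ with $\lambda^*+\mu^*=(k-1)^{i-2}-1$, and setting $\lambda=(s^*-1)(k-1)^{i-2}+\lambda^*$, $\mu=(k-1-s^*)(k-1)^{i-2}+\mu^*$ gives $\lambda+\mu=(k-2)(k-1)^{i-2}+(k-1)^{i-2}-1=(k-1)^{i-1}-1$. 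The term $(k-1)^{i-2}-1$ that your computation is missing comes precisely from this recursive call.
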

  
    \begin{proof}
    We induct on $i\geq 1$.
    
    Base case $i=1$:
    The only even edges in $E(P^{\Delta,1})\setminus E(P^{\Delta,0})$ are $v_0v_{k-2}$ and $w_0v_{k-2}$.
    Both of them satisfy the hypothesis $j+j' \geq (k-1)^1-(k-1)-2\cdot F'(k-2,k)-2$.
    The claim now holds with either $\alpha=-1$ and $\beta,\gamma,\delta,\lambda,\mu$ equal to zero or $\gamma=-1$ and $\alpha,\beta,\delta,\lambda,\mu$ equal to zero.
      
    Inductive step: Let $Q = u_{j_0}\ldots u_{j_{k-1}}$ be a path in $P^{\Delta, i-1}$ such that $j_0 = j$, $j_{k-1} = j'$, and $u_{j_t}\in\{ v_{j_t},w_{j_t} \}$ for $1\leq t \leq k-2$. We first show that $Q$ has exactly one even edge.
    The number of even edges in $Q$ is odd for otherwise we have $j \not \equiv j' \mod 2$.    
    If $i=2$ the only three even edges in $P^{\Delta,i-1}$ are $v_0v_{k-2}$, $v_{k-2}w_0$ and $v_0w_0$. A path cannot contain all three of them so $Q$ has precisely one even edge. 
    If $i\geq 3$, suppose there are at least three even edges in $Q$ and let $s,s'\in [k-1]$ such that $u_{j_s}u_{j_{s+1}}$ is the first and $u_{j_{s'-1}}u_{j_{s'}}$ is the last even edge in $Q$. Then $s+ (k-1-s') \leq k-4$. By Lemma \ref{lem:cycles_endpointofevenedge}
        \[
        j_s \leq (k-1)^{i-1}-1 \qquad,\qquad j_{s'} \leq (k-1)^{i-1}-1.
        \]
    Combining this with Lemma \ref{lem:cycles_analogueodd} and $\max A_{i-1} = (k-1)^{i-1}$ gives us
        \begin{linenomath}     \begin{align*}
        j+j'  = j_0 + j_{k-1} 
        &= \sum_{t=0}^{s-1} (j_t-j_{t+1}) + j_s + j_{s'} + \sum_{t=s'+1}^{k-1} (j_t - j_{t-1}) \\
            &\leq 2(k-1)^{i-1} - 2 + (s+ k-1-s')\cdot (k-1)^{i-1} \\
            &\leq (k-2)\cdot (k-1)^{i-1} - 2 \\
            &= (k-1)^i - (k-1)^{i-1} - 2 \\
            &< (k-1)^i-(k-1)-2\cdot F'(k-2,k)-2,
        \end{align*} \end{linenomath}
    which contradicts the assumption $j+j' \geq (k-1)^i-(k-1)-2\cdot F'(k-2,k)-2$.
    Here we used that $i\geq 3$ and so $(k-1)^{i-1} > 2\cdot F'(k-2,k) + (k-1)$ by \eqref{eq:FrobeniusDash}.
    We have thus shown that $Q$ has precisely one even edge.
  
    Take the unique $s^*\in [k-1]$ for which $u_{j_{s^*-1}}u_{j_{s^*}}$ is an even edge. We claim that there exist $\alpha^*,\gamma^*\in\ZZ_{\geq -1}$, $\beta^*,\delta^*,\lambda^*,\mu^*\in\NN_0$ such that $\lambda^*+\mu^* = (k-1)^{i-2}-1$ and
        \begin{equation} \label{jstar}
        j_{s^*-1} = \lambda^* (k-1) - \alpha^* (k-2) - \beta^* k \qquad,\qquad j_{s^*} = \mu^* (k-1) - \gamma^* (k-2) - \delta^* k.
        \end{equation}
    Indeed, this follows if $i=2$ and $j_{s^*-1}= j_{s^*}=0$ by setting all parameters to be $0$. For all other cases, this follows by the induction hypothesis. In order to appeal to the induction hypothesis, we need to establish the required lower bound on $j_{s^*-1} + j_{s^*}$ and show that the edge $u_{j_{s^*-1}}u_{j_{s^*}} \in E(P^{\Delta,i-1})\setminus E(P^{\Delta,i-2})$, which we now do. 
    We have
    	  \begin{linenomath}  \begin{align*}
    	j_{s^*-1} + j_{s^*}
    	&= j + j' - \sum_{t=1}^{s^*-1} (j_{t} - j_{t-1})  -  \sum_{t=s^*+1}^{k-1} (j_{t-1} - j_{t}) \\
    	&\geq (k-1)^i - (k-1) - 2F'(k-2,k) - 2 - (k-2)\cdot (k-1)^{i-1} \\
    	&= (k-1)^{i-1} - (k-1) -2 F'(k-2,k) - 2, 
    	\end{align*} \end{linenomath}
    as required.
    Now suppose for a contradiction that $u_{j_{s^*-1}}u_{j_{s^*}}$ already appeared at time $i-2$. If $i=2$, then the only even edge at time 0 is $v_0w_0$ and as we are not appealing to the induction hypthesis for this case, we can assume that $i\geq 3$.  Then by Lemma \ref{lem:cycles_endpointofevenedge}
    	\[
    	j_{s^*-1}, j_{s^*} \leq (k-1)^{i-2} -1 
    	\]
    and
    	\[
    	j_{s^*-1} + j_{s^*} \leq 2(k-1)^{i-2} - 2 < (k-1)^{i-1} - (k-1) - 2F'(k-2,k) - 2, 
    	\]
    contradicting our lower bound above, using that $i\geq 3$ and \eqref{eq:FrobeniusDash} here. 
    Therefore  $u_{j_{s^*-1}}u_{j_{s^*}} \in E(P^{\Delta,i-1})\setminus E(P^{\Delta,i-2})$ and the induction hypothesis gives \eqref{jstar}. 
    
    Now by Lemma \ref{lem:cycles_analogueodd} we have that we can find $\alpha_t,\beta_t \in \NN_0$ such that 
        \[
        j_t-j_{t-1} = (k-1)^{i-1} - \alpha_t (k-2) - \beta_t k
        \]
    for $s^* < t \leq k-1$ and
        \[
        j_{t}-j_{t+1} = (k-1)^{i-1} - \alpha_t (k-2) - \beta_t k
        \]
    for $0 \leq t < s^*-1$.
    Therefore,
        \begin{linenomath}     \begin{align*}
        j      &=  \sum_{t=0}^{s^*-2} (j_{t}-j_{t+1})  + j_{s^*-1} = \lambda (k-1) - \alpha (k-2) - \beta k, \\
        j'     &= \sum_{t=s^*+1}^{k-1}(j_{t}-j_{t-1}) +  j_{s^*} =\mu (k-1) - \gamma (k-2) - \delta k,
        \end{align*} \end{linenomath}
    where
        \begin{linenomath}     \begin{align*}
        \lambda &:= (s^*-1)\cdot (k-1)^{i-2} + \lambda^*,    &    &    \mu := (k-1-s^*)\cdot (k-1)^{i-2} + \mu^* ,&\\
        \alpha &:= \alpha_0 + \ldots + \alpha_{s^*-2}+\alpha^*, &    &     \beta := \beta_0 + \ldots + \beta_{s^*-2}+\beta^*, &\\
        \gamma &:= \alpha_{s^*+1}+\ldots+\alpha_{k-1}+\gamma^*,  &    &     \delta :=  \beta_{s^*+1}+\ldots+\beta_{k-1}+\delta^*. &
        \end{align*} \end{linenomath}
    Moreover,
        \[
        \lambda + \mu =  (k-2)(k-1)^{i-2} + \lambda^* + \mu^* = (k-1)^{i-1} - 1,
        \]
    which completes the induction.
    \end{proof}

We now complete the proof of the second part of Theorem \ref{thm:strategy_lowerbound}.
Take the smallest $i_0\in\NN$ for which the even edge $v_\ell w_\ell$ lies in  $E(P^{\Delta,i_0})$ and suppose that $i_0\leq r-1$.
Lemma \ref{lem:cycles_endpointofevenedge} and \eqref{eq:defofell} yield
    \[
    2(k-1)^{i_0}-2 \geq \ell + \ell = (k-1)^{r-1} - (k-1) - 2\cdot F'(k-2,k) - 2,
    \]
and so $i_0\geq r-1$ when $n$ and thus $r$ is sufficiently large. It remains to rule out the case $i_0=r-1$. Suppose that $i_0=r-1$. By Lemma \ref{lem:lower_structure} there exist $\alpha,\gamma\in\ZZ_{\geq -1}$, $\beta,\delta,\lambda,\mu\in\NN_0$ with $\lambda+\mu = (k-1)^{r-2}-1$ such that
    \begin{linenomath} \begin{equation}\label{eq:lower_structure}
    \ell = \lambda (k-1) - \alpha (k-2) - \beta k = \mu (k-1) - \gamma (k-2) - \delta k.
    \end{equation} \end{linenomath} 
By symmetry we can assume that $\lambda\leq\mu$. From \eqref{eq:lower_structure} and the definition of  $\ell$ in the statement of Theorem \ref{thm:strategy_lowerbound},  we obtain
    \begin{linenomath}     \begin{align}
    \label{eq:even_structure_alpha} F'(k-2,k) &= \left( \frac{(k-1)^{r-2}-1}2 - \lambda -1 \right)\cdot (k-1) + (\alpha+1)\cdot (k-2) + \beta k.   \end{align} \end{linenomath}
Using that $F'(k-2,k)$ is even \eqref{eq:FrobeniusDash}, if  we take \eqref{eq:even_structure_alpha} modulo 2 we can see that
    \begin{linenomath} \begin{equation}\label{eq:lower_modulo}
    \Lambda:= \frac{(k-1)^{r-2}-1}2 - \lambda \equiv 1 \mod 2.
    \end{equation} \end{linenomath} 
The condition $\lambda+\mu = (k-1)^{r-2}-1$ and the assumption $\lambda \leq \mu$ imply
    \begin{linenomath} \begin{equation}\label{eq:lower_pigeon}
    \lambda \leq \frac{(k-1)^{r-2}-1}2 \leq \mu.
    \end{equation} \end{linenomath} 
We cannot have equality in \eqref{eq:lower_pigeon} because of \eqref{eq:lower_modulo}. Therefore $\Lambda\geq 1$.
Since $2(k-1)$ can be written as $(k-2) + k$, by \eqref{eq:even_structure_alpha} we have
    \[
    F'(k-2,k) = \left( \alpha+1 + \frac 1 2 \left(\Lambda -1 \right) \right)\cdot  (k-2) +  \left( \beta + \frac 1 2 \left(\Lambda - 1\right) \right) \cdot k.
    \]
However, this contradicts the definition of $F'(k-2,k)$ \eqref{eq:FrobeniusDash}. Consequently, $v_\ell w_\ell \notin E(P^{\Delta,r-1})$.

\section{Upper bounds}\label{sec:cycles_upper}
We start with some general results on $C_k$-processes on paths in Section \ref{sec:cycles_upper_paths}.
We will prove parts \ref{upperpart1} and \ref{upperpart2} of Theorem \ref{thm:strategy_upperbound} in Section \ref{sec:cycles_upper_part_i_and_ii}.
Part \ref{upperpart3} of Theorem \ref{thm:strategy_upperbound} will be shown in Section \ref{sec:cycles_upper_part_iii}

\subsection{Results on paths}\label{sec:cycles_upper_paths}
Let $n'\in \NN$, and $(P_{n'}^i)_{i\geq 0}$ be the $C_k$-process on $P_{n'}$.
We write $P^i$ instead of $P_{n'}^i$ when $n'$ is clear from context.
The sets
    \begin{linenomath}     \begin{equation*}
    D_i = D_i(n') := \{  \ell \in [n'-1] : xy \in E(P^i) \text{ whenever } y-x = \ell \} 
    \end{equation*} \end{linenomath}
play a central role in proving upper bounds on $\tau_{C_k}(P_{n'})$.
Clearly $D_i \subseteq D_{i+1}$.
If $D_i =[n'-1]$, then the percolation process is over by the $i^{\text{th}}$ step. When $k$ is even then the process is already over when $D_i$ contains just the odd integers up to $n'-1$, since then $P^i$ has stabilised at $K_{\floor{n'/2},\ceil{n'/2}}$.
Flipping the vertices of $P_{n'}$, i.e. the map $\sigma: V(P_{n'})\to V(P_{n'})$, $x \mapsto n'-1-x$, is an automorphism of $P_{n'}$ and hence of $P^i$ for all $i\geq 0$ by Observation \ref{obs:hom}.
For any $x,y \in V(P_{n'})$ one has $x+y \leq n'-1$ or $\sigma(x)+\sigma(y) \leq n'-1$.
This allows us to write $D_i$ as follows:
    \begin{equation}\label{eq:defofDi}
    D_i = \left\lbrace  \ell \in [n'-1] : xy \in E(P^i) \text{ whenever } y-x = \ell \text{ and } x+y \leq n'-1 \right\rbrace
    \end{equation}
The next  lemmas state further simple properties about how $D_i$ develops during the $C_k$-process. Recall the definition of a sumset $hA$ from Section \ref{sec:notation}.

    \begin{lem}\label{lem:gen_iteration}
    For every $i\geq 0$, $(k-1)D_i \cap [n'-1]\subseteq D_{i+1}$. 
    \end{lem}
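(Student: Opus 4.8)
The plan is to show that every element of $(k-1)D_i$ that is at most $n'-1$ already lies in $D_{i+1}$, directly from the definition of $D_{i+1}$ via the $C_k$-process. Take $\ell = \ell_1 + \ell_2 + \cdots + \ell_{k-1}$ with each $\ell_t \in D_i$ and $\ell \leq n'-1$, and take any $x,y \in V(P_{n'})$ with $y - x = \ell$; by \eqref{eq:defofDi} we may assume $x + y \leq n'-1$, so in particular $y \leq n'-1$, and hence all integers between $x$ and $y$ are valid vertices of $P_{n'}$. The idea is to build a path of length $k-1$ from $x$ to $y$ in $P^i$, which forces $xy \in E(P^{i+1})$ (unless $xy$ is already an edge, in which case we are trivially done), so that $\ell \in D_{i+1}$.

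First I would define the partial sums $s_0 = x$ and $s_t = x + \ell_1 + \cdots + \ell_t$ for $1 \leq t \leq k-1$, so that $s_{k-1} = x + \ell = y$. Each $s_t$ is an integer in $[x,y] \subseteq \{0,\ldots,n'-1\}$, hence a vertex of $P_{n'}$. For each consecutive pair, $s_t - s_{t-1} = \ell_t \in D_i$, and since the $s_t$ are non-decreasing with $s_t + s_{t-1} \leq 2y \leq \ldots$ — here one needs a little care, because the membership condition in \eqref{eq:defofDi} only guarantees $uv \in E(P^i)$ for pairs with small coordinate sum. The clean way around this is: either use the non-symmetrised definition (the first display for $D_i$), which says $\ell_t \in D_i$ implies $uv \in E(P^i)$ for \emph{every} pair with $v - u = \ell_t$, including $(s_{t-1},s_t)$; or, if one insists on \eqref{eq:defofDi}, observe that $\sigma$ is an automorphism of $P^i$ so the symmetrised and unsymmetrised definitions coincide. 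Either way, each edge $s_{t-1}s_t$ is present in $P^i$.

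It remains to check that $s_0, s_1, \ldots, s_{k-1}$ is genuinely a \emph{path}, i.e. that the $s_t$ are pairwise distinct. This holds because each $\ell_t \geq 1$ (as $\ell_t \in D_i \subseteq [n'-1]$), so the sequence is strictly increasing. Thus $s_0 s_1 \cdots s_{k-1}$ is a path of length $k-1$ in $P^i$ with endpoints $x$ and $y$, so by the definition of the $C_k$-process $xy \in E(P^{i+1})$, giving $\ell \in D_{i+1}$. This is essentially routine; the only subtlety is the bookkeeping around the symmetrised definition \eqref{eq:defofDi} versus the plain definition, which is why I would either invoke the automorphism $\sigma$ explicitly or simply work with the unsymmetrised version of $D_i$ throughout this argument.
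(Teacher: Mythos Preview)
Your proof is correct and follows essentially the same approach as the paper: decompose $\ell$ into $k-1$ summands from $D_i$, form the partial sums starting at $x$, and observe this gives a path of length $k-1$ in $P^i$, forcing $xy\in E(P^{i+1})$. The paper's version is slightly terser because it works directly with the unsymmetrised definition of $D_i$ throughout (so the ``subtlety'' you flag does not arise), but the argument is the same.
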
 
        
    \begin{proof}
    Let $\ell\in(k-1)D_i \cap [n'-1]$. Choose $d_1,\ldots,d_{k-1}\in D_i$ such that $\ell = d_1+\ldots+d_{k-1}$. For any $x\in V(P_{n'})$ with $x+\ell\in V(P_{n'})$,
    \[ x,x+d_1,\ldots,x+ (d_1+\ldots+d_{k-1})\]
    is a path of length $k-1$ from $x$ to $x+\ell$ in $P^i$ since $d_1,\ldots,d_{k-1}\in D_i$. Thus $x$ and $x+\ell$ are adjacent in $P^{i+1}$ and the claim follows by the definition of $D_{i+1}$.
    \end{proof}
    
    \begin{lem}\label{lem:gen_firstkodd}
    If $n' \geq 3(k-1)$, then $\{ \ell \in [k] : \ell \text{ odd } \}\subseteq D_2$.
    \end{lem}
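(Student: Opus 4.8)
The plan is to use the existing machinery on $D_i$ together with the diameter-halving lemma. First I would observe that $P_{n'}^0 = P_{n'}$ already contains the differences $1$ and $k-1$, so $\{1, k-1\} \subseteq D_0$. Applying Lemma \ref{lem:gen_iteration} once gives $(k-1)D_0 \cap [n'-1] \subseteq D_1$. The key point is to identify which small odd values land in $(k-1)D_0 = (k-1)\{1, k-1\}$, i.e. which numbers can be written as $\alpha \cdot 1 + \beta(k-1)$ with $\alpha + \beta = k-1$, $\alpha,\beta \in \NN_0$. These are exactly the numbers $\alpha + (k-1-\alpha)(k-1) = (k-1)^2 - \alpha(k-2)$ for $0 \leq \alpha \leq k-1$. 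Since $n' \geq 3(k-1)$ ensures these values up to roughly $(k-1)^2$ (or rather the ones we need) are in range, but we actually only need a couple of them: in particular $\alpha = k-1$ gives the value $k-1$ (already had it), $\alpha = k-2$ gives $2(k-1) - (k-2) = k$, and $\alpha = k-3$ gives $3(k-1) - 2(k-2)\cdot$... let me recompute: $(k-1)^2 - (k-3)(k-2) = k^2 - 2k + 1 - (k^2 - 5k + 6) = 3k - 5$. Hmm, this is getting complicated, so the cleaner route follows.

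The cleaner approach: I would first show $\{2, k\} \subseteq D_1$ (or directly get enough into $D_1$) and then apply Lemma \ref{lem:gen_iteration} again to reach $D_2$. Concretely, $k-1 \in D_0$ trivially, and using the path of length $k-1$ all of whose edges have difference $1$ except one of difference... no. Let me reconsider once more: the honest shortest path is to note that a path of length $k-1$ in $P_{n'}$ from $x$ to $x + \ell$ exists for $\ell$ equal to any value of the form $a \cdot 1 + b \cdot (k-1)$ with $a + b = k-1$; but in $P^1$ we also have the chord of difference $k-1$ available as a "unit", and reversing direction is not allowed on a path in $P_{n'}$ but is fine once we pass to $P^1$ since edges are symmetric. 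So in $P^1$ we may form a path of length $k-1$ using steps of size $\pm 1$ and $\pm(k-1)$, subject only to the intermediate vertices staying in $[0, n'-1]$, which $n' \geq 3(k-1)$ guarantees by choosing a starting point with enough room. Summing $k-1$ such signed steps, one can realise any odd $\ell \in [k]$: for $\ell = 1$ use one $+1$ and then $(k-2)/2$ pairs of $\pm 1$ if $k$ even, or pair up with $+(k-1), -(k-1)$ cancellations if $k$ odd to fix parity; for $\ell = 3$ use three $+1$'s and cancelling pairs; and so on up to $\ell = k$ or $\ell = k-1$. The parity of $k-1$ signed unit-ish steps: each step of size $1$ or $k-1$ changes the running difference by an odd amount when $k$ is even and by an even-or-odd amount otherwise — so I would split into cases on the parity of $k$.

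Given the delicacy, here is the version I would actually write. \textbf{Case $k$ even.} Then $k-1$ is odd, and in $P^1$ we have at our disposal, between any two vertices at the right distance, edges of difference $1$ and $k-1$, both odd; a sum of $k-1$ (an odd number of) odd terms is odd, consistent with all targets in $\{\ell \in [k] : \ell \text{ odd}\}$. For a given odd $\ell$ with $1 \leq \ell \leq k$ (so $\ell \leq k-1$ since $k$ is even), write $\ell = \ell \cdot 1 + 0 \cdot (k-1)$ using $\ell$ steps $+1$ and then $\frac{k-1-\ell}{2}$ cancelling pairs $(+1, -1)$ — valid since $k - 1 - \ell$ is even. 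This is a closed walk correction giving a walk of length $k-1$; to make it a path (distinct intermediate vertices) we instead route the cancelling pairs as detours using fresh vertices, which exist because $n' \geq 3(k-1)$ gives room on both sides. Hence $\ell \in D_2$ via Lemma \ref{lem:gen_iteration} applied to the appropriate configuration — more precisely $\ell \in D_1$ already, a fortiori in $D_2$. \textbf{Case $k$ odd.} Then $k-1$ is even and $1$ is odd; now $k \in [k]$ is odd and must be hit. Here $k = (k-1) + 1$, a path of length $2$; pad to length $k-1$ with $\frac{k-3}{2}$ cancelling pairs using fresh vertices. For odd $\ell < k$, use $\ell$ steps of $+1$ and pad similarly. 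In both cases every such $\ell$ lies in $D_1 \subseteq D_2$, so $\{\ell \in [k] : \ell \text{ odd}\} \subseteq D_2$. The main obstacle is purely bookkeeping: arranging the "padding" steps so that the resulting walk of length exactly $k-1$ is a genuine path with all intermediate vertices distinct and inside $[0, n'-1]$; the hypothesis $n' \geq 3(k-1)$ is exactly what gives the slack (a block of $\sim 2(k-1)$ free vertices to one side) to route these detours, and I would spell out one explicit such routing (e.g. go right by the needed amount, then zig-zag into unused territory and back) rather than all of them.
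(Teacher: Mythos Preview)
Your overall strategy—build a path of length $k-1$ in $P^1$ between $x$ and $x+\ell$ using the available edge-differences $\pm 1$ and $\pm(k-1)$—is the same as the paper's. But the execution has real gaps, and some of the surrounding claims are false.

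First, two errors worth flagging. You write $\{1,k-1\}\subseteq D_0$; this is wrong, since $P^0=P_{n'}$ has only difference-$1$ edges, so $D_0=\{1\}$ and $k-1$ first appears in $D_1$. You also conclude ``more precisely $\ell\in D_1$ already'': that is false for every odd $\ell$ with $1<\ell<k-1$, because $D_1=\{1,k-1\}$. A path of length $k-1$ in $P^1$ only gives $\ell\in D_2$.

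The substantive gap is that your proposed construction does not produce a path. In the even-$k$ case you take $\ell$ steps of $+1$ followed by $(k-1-\ell)/2$ cancelling pairs $(+1,-1)$. But any simple walk using only $\pm1$ steps is a subpath of $P_{n'}$, so the \emph{only} simple $\pm1$-walk from $x$ to $x+\ell$ has length exactly $\ell$, not $k-1$. Your fix, ``route the cancelling pairs as detours using fresh vertices'', therefore \emph{must} use at least one $\pm(k-1)$ jump, and you never say how; invoking Lemma~\ref{lem:gen_iteration} does not help, since that lemma only combines positive elements of $D_i$. The same issue hits your odd-$k$ case for $\ell<k$.

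The paper closes this gap with a single explicit path valid for every odd $3\le\ell\le k$: go right by $(\ell-1)/2$ unit steps, make one $+(k-1)$ jump, then descend by unit steps to $x+\ell$,
\[
x,\;x+1,\;\ldots,\;x+\tfrac{\ell-1}{2},\;x+\tfrac{\ell-1}{2}+(k-1),\;x+\tfrac{\ell-1}{2}+(k-1)-1,\;\ldots,\;x+\ell.
\]
The two blocks occupy disjoint intervals, so this is a genuine path of length $k-1$ in $P^1$, and $n'\ge 3(k-1)$ together with \eqref{eq:defofDi} keeps the top vertex in range. Your closing parenthetical (``go right by the needed amount, then zig-zag into unused territory and back'') is pointing here, but the proposal never actually writes down a working routing.
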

    
    \begin{proof}
    We have $D_0 = \{ 1 \}$ and $D_1 = \{ 1,k-1 \}$. For every odd $3\leq\ell\leq k$ and $x\in V(P_{n'})$ with $x+(x+\ell) \leq n'-1$,
        \[
        x,\ldots,x+\frac{\ell-1}2, x+(k-1)+\frac{\ell-1}2, x+(k-1)+\frac{\ell-1}2-1,\ldots,x+\ell
        \]
    is a path of length $k-1$ from $x$ to $x+\ell$ in $P^1$ due to the hypothesis $3(k-1) \leq n'$. Indeed, a quick analysis of two cases $x\leq (k-1)$ and $x>(k-1)$ gives that $x+(k-1) +(\ell-1)/2\leq n'-1$.  
    Therefore \eqref{eq:defofDi} gives $\{ \ell\in [k] : \ell \text{ odd } \} \subseteq D_2$. 
    \end{proof}

Recall that Lemma \ref{lem:gen_fromright} in the proof of Theorem \ref{thm:strategy_lowerbound} indicated how differences that occur as edges at time $i$ look like and thereby helped us to obtain lower bounds on the running time.
For upper bounds we need a converse statement telling us for which parameters $\alpha,\beta$ the differences $(k-1)^i - \alpha\cdot (k-2) - \beta\cdot k$ {\em do} appear.
To this end we define a subset
    \begin{linenomath}     \begin{align*}
    A'_i &:= \left\lbrace (k-1)^i-\alpha\cdot(k-2)-\beta\cdot k : \alpha,\beta\in\NN_0, \alpha+\beta\leq (k-1)^{i-2}\cdot(k-2) \right\rbrace 
    \end{align*} \end{linenomath}
of $A_i =  \left\lbrace (k-1)^i-\alpha\cdot(k-2)-\beta\cdot k : \alpha,\beta\in\NN_0 \right\rbrace$.
This set lies in the intersection of $A_i$ and the interval $[(k-1)^{i-2},(k-1)^i]$. The upper end of the interval is attained when $\alpha=\beta=0$ whereas the lower end is achieved by $\alpha=0$, $\beta=(k-1)^{i-2}\cdot(k-2)$.
The next lemma states that a slightly smaller interval piece of $A_i$  is fully contained in $A_i'$. 

    \begin{lem}\label{lem:gen_intervalfromright}
    For every $i\geq 3$,
        \[
        [(k-1)^{i-2}+2(k-1), (k-1)^i] \cap A_i  \subseteq A'_i.
        \]
    \end{lem}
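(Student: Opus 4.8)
The plan is to pass to the ``deficiency'' $d := (k-1)^i - m$ of an element $m$ of the left-hand side and to exhibit a representation $d = \alpha(k-2) + \beta k$ with $\alpha,\beta\in\NN_0$ and $\alpha+\beta$ small enough; membership of $m$ in $A_i'$ is then immediate from the definition. First I would fix $m \in [(k-1)^{i-2}+2(k-1),(k-1)^i] \cap A_i$ and set $d := (k-1)^i - m$, so that
\[
0 \le d \le (k-1)^i - (k-1)^{i-2} - 2(k-1),
\]
and, since $m\in A_i$, there is at least one representation $d = \alpha(k-2)+\beta k$ with $\alpha,\beta\in\NN_0$.

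Among all such representations, choose one minimising $\alpha$. The key observation is that then $\alpha \le k-1$: if $\alpha \ge k$, then $(\alpha-k,\;\beta+(k-2))$ is again a representation of $d$ with non-negative coefficients (it uses only the identity $k(k-2)=(k-2)k$, so the value is unchanged, and $\alpha-k\ge0$, $\beta+(k-2)\ge0$) but with strictly smaller first coordinate, contradicting minimality. For this representation we have $\beta = (d-\alpha(k-2))/k$, hence
\[
\alpha+\beta = \alpha + \frac{d-\alpha(k-2)}{k} = \frac{d+2\alpha}{k} \le \frac{d+2(k-1)}{k}.
\]

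It remains to bound the numerator. Using the upper bound on $d$ together with the identity $(k-1)^2-1 = k(k-2)$,
\[
d+2(k-1) \le (k-1)^i - (k-1)^{i-2} = (k-1)^{i-2}\bigl((k-1)^2-1\bigr) = (k-1)^{i-2}\,k\,(k-2),
\]
so that $\alpha+\beta \le (k-1)^{i-2}(k-2)$. Thus $m = (k-1)^i-\alpha(k-2)-\beta k$ with $\alpha,\beta\in\NN_0$ and $\alpha+\beta\le(k-1)^{i-2}(k-2)$, which is exactly the condition for $m\in A_i'$. There is no real obstacle here beyond bookkeeping: the only points needing care are the coefficient-trading step (which certifies that the minimiser has $\alpha\le k-1$ while keeping both coordinates non-negative) and the observation that the lower endpoint $(k-1)^{i-2}+2(k-1)$ of the interval is precisely what makes the final inequality go through; in particular no case distinction on the parity of $k$ is required, and $i\ge3$ is only used to keep $(k-1)^{i-2}$ a (positive) integer.
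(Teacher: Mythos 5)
Your proof is correct and takes essentially the same approach as the paper's: both start from an arbitrary representation $\ell = (k-1)^i - \alpha(k-2) - \beta k$, normalise to $\alpha \le k-1$ via the identity $k(k-2) = (k-2)k$, rewrite $\alpha + \beta$ as $(d+2\alpha)/k$, and bound it using the interval constraint. The only cosmetic difference is that you phrase the normalisation as choosing a minimiser of $\alpha$, whereas the paper simply says ``we may assume $\alpha \le k-1$'' and leaves the repeated coefficient trade implicit.
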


    \begin{proof}
    Let $i\geq 3$ and $\ell \in  [(k-1)^{i-2}+2(k-1), (k-1)^i] \cap A_i$. Then there exist $\alpha,\beta\in\NN_0$ satisfying $\ell = (k-1)^{i}-\alpha(k-2)-\beta k$. We may assume that $\alpha\leq k-1$ because $(\alpha-k)\cdot(k-2) + (\beta+k-2)k = \alpha (k-2) + \beta k$.
    From
        \[
        (\alpha+\beta)k - 2\alpha = (k-1)^{i}-\ell \leq  (k-1)^i-(k-1)^{i-2}-2(k-1) = (k-1)^{i-2}\cdot (k-2)k -2(k-1)
        \]
    we infer
        \[
        \alpha+\beta \leq (k-1)^{i-2}\cdot (k-2) + \frac{2\alpha}k - \frac{2(k-1)}k \leq (k-1)^{i-2}\cdot (k-2), 
        \] 
    hence $\ell \in A'_{i}$.
    \end{proof}

The next lemma ensures that the relevant piece of $A_i'$ is contained in $D_i$. This fact will play a crucial role in us showing that a bootstrap process has ended. 
In the proof the advantage of the somewhat technical choice of the upper bound on $\alpha +\beta$ in the inductive proof becomes visible. 

    \begin{lem} \label{lem:gen_revers}
    Given $n'\geq 3(k-1)$ we have that
        \[ 
        A'_i \cap [n'-1]\subseteq D_i.
        \]    
    for every $i\geq 0$.
    \end{lem}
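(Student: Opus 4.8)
The statement to be proven is $A'_i \cap [n'-1] \subseteq D_i$ for every $i \geq 0$, under the hypothesis $n' \geq 3(k-1)$. The plan is to proceed by induction on $i$. For small $i$ (the base cases $i = 0, 1, 2$) the set $A'_i$ is small: indeed $A'_0 = \{1\}$ and $A'_1$ is contained in $\{1, k-1\}$, both of which lie in $D_0 = \{1\}$ (respectively $D_1 = \{1, k-1\}$) — here one should double-check the constraint $\alpha + \beta \leq (k-1)^{i-2}(k-2)$ makes sense or is vacuous/trivially handled for $i < 2$, perhaps interpreting $(k-1)^{i-2}(k-2)$ loosely or checking these cases by hand. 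For $i = 2$, $A'_2 \subseteq \{(k-1)^2 - \alpha(k-2) - \beta k : \alpha + \beta \leq k-2\}$, and one needs these differences (those that are at most $n'-1$) to appear in $D_2$; this should follow by a direct path construction in $P^1$ analogous to Lemma~\ref{lem:gen_firstkodd}, combined with the odd small differences already placed in $D_2$ by that lemma, and possibly Lemma~\ref{lem:gen_iteration} applied once.

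\textbf{Inductive step.} For $i \geq 3$, take $\ell \in A'_i \cap [n'-1]$, so $\ell = (k-1)^i - \alpha(k-2) - \beta k$ with $\alpha + \beta \leq (k-1)^{i-2}(k-2)$. The natural idea is to write $\ell$ as a sum of $k-1$ elements of $D_{i-1}$ and then invoke Lemma~\ref{lem:gen_iteration} ($(k-1)D_{i-1} \cap [n'-1] \subseteq D_i$). Concretely, one wants to split $\ell$ into $k-1$ pieces, each of the form $(k-1)^{i-1} - \alpha_t(k-2) - \beta_t k$ with $\alpha_t + \beta_t \leq (k-1)^{i-3}(k-2)$ (so each piece lies in $A'_{i-1}$), and each piece lying in $[n'-1]$ (or rather, we need each piece to be in $D_{i-1}$, so we need $A'_{i-1} \cap [n'-1] \supseteq$ these pieces, which is exactly the induction hypothesis — hence we need each piece to be a genuine element of $A'_{i-1}$ and to be $\leq n'-1$). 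Since $\sum_t (k-1)^{i-1} = (k-1)^i$, we need $\sum_t \alpha_t = \alpha$ and $\sum_t \beta_t = \beta$. The constraint $\alpha + \beta \leq (k-1)^{i-2}(k-2) = (k-1) \cdot (k-1)^{i-3}(k-2)$ is precisely what allows us to distribute $\alpha$ and $\beta$ among the $k-1$ summands so that each gets total mass at most $(k-1)^{i-3}(k-2)$ — this is the point of the "somewhat technical choice" the text alludes to. One must also ensure each individual piece, after subtracting its share of $\alpha(k-2) + \beta k$, stays within $[1, n'-1]$ (the lower end being the more delicate: a piece could in principle become small or even nonpositive if one summand absorbs too much, so the distribution should be done greedily/evenly, and one uses $n' \geq 3(k-1)$ together with the fact that $\ell \geq (k-1)^{i-2}$ forces $\ell$ to be reasonably large so that not all pieces can be tiny; alternatively, pieces that would be too small can be clamped and the excess redistributed, using that each $(k-1)^{i-1}$ is large compared to $(k-1)^{i-3}(k-2)\cdot k$).

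\textbf{Main obstacle.} The crux is the bookkeeping in the splitting: showing that $\alpha$ and $\beta$ can be partitioned into $k-1$ nonnegative parts $(\alpha_t)$, $(\beta_t)$ with $\alpha_t + \beta_t \leq (k-1)^{i-3}(k-2)$ for each $t$ \emph{and} with each resulting summand $(k-1)^{i-1} - \alpha_t(k-2) - \beta_t k$ lying in $[1, n'-1]$. The upper bound $\leq n'-1$ on each summand is automatic once we know $\ell \leq n'-1$ and all summands are positive (since each summand is at most $(k-1)^{i-1} \leq \ell < n'$ — wait, one should verify $(k-1)^{i-1} \leq n'-1$, which holds since $\ell \geq (k-1)^{i-2} + \text{stuff}$, hmm, actually one needs $\ell$ itself $\leq n'-1$ and each piece $\leq (k-1)^{i-1}$; if $(k-1)^{i-1} > n'-1$ then $i$ is so large that $A'_i \cap [n'-1]$ interacts with the top of the range and a separate short argument or the same argument with care is needed). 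The genuinely careful part is the lower bound, and I expect the clean way is: distribute $\alpha$ and $\beta$ as evenly as possible so each $\alpha_t \leq \lceil \alpha/(k-1)\rceil$, $\beta_t \leq \lceil \beta/(k-1)\rceil$, note $\alpha_t(k-2) + \beta_t k \leq (k-1)^{i-3}(k-2) \cdot k + O(k)$, and check this is less than $(k-1)^{i-1}$ for $i \geq 3$ using $(k-1)^{i-1} = (k-1)^2 \cdot (k-1)^{i-3} \geq (k-1)^{i-3}(k-2)k + (k-1)^{i-3}(\ldots)$ with room to spare for the ceiling corrections — then Lemma~\ref{lem:gen_iteration} finishes it. I would also want to handle the boundary where $\ell$ is near $(k-1)^{i-2}$ (the bottom of $A'_i$) separately if the even distribution leaves one summand too small, by shifting mass between the $(k-2)$-count and $k$-count (using $2(k-1) = (k-2) + k$) to rebalance.
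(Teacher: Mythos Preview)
Your approach is essentially the paper's: induct on $i$, handle $i=0,1,2$ directly (for $i=2$ the paper gives explicit $xy$-paths of length $k-1$ in $P^1$, branching on whether $x\geq\beta$ or $x<\beta$; this is the ``direct path construction'' you gesture at, and it does need to be written out since Lemma~\ref{lem:gen_iteration} applied to $D_1=\{1,k-1\}$ misses the terms with $\beta>0$), and for $i\geq 3$ use exactly the even floor/ceiling split of $\alpha$ and $\beta$ into $k-1$ parts to show $A'_i\subseteq (k-1)A'_{i-1}$, then invoke Lemma~\ref{lem:gen_iteration}. Your worry about each summand landing in $[1,n'-1]$ is unnecessary and is where your outline loses focus: every element of $A'_{i-1}$ is at least $(k-1)^{i-3}\geq 1$, and since the $k-1$ positive summands add up to $\ell\leq n'-1$ each is automatically at most $n'-1$, so no rebalancing or boundary analysis is needed.
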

        
    \begin{proof}
    We induct on $i\geq 0
    $. We have that $A'_0 = \{ 1 \} = D_0$ and $A'_1 = \{ k-1 \} \subseteq D_1$.
    Let $i=2$, and let $x,y\in V(P_{n'})$ with $y-x = (k-1)^2-\alpha(k-2)-\beta k$ for some $\alpha+\beta \leq (k-2)$. Write $s:=k-1-\alpha-\beta$ and note that $s\geq 1$.
    If $x \geq \beta$ then
        \[
        x, x-1,\ldots, x-\beta, x-\beta+(k-1),\ldots, x-\beta+s(k-1),x-\beta+s(k-1)+1,\ldots,y
        \] 
    is a path of length $k-1$ from $x$ to $y = x-\beta+s(k-1)+\alpha$ in $P^1$.
    If $x \leq \beta$ consider the $xy$-path
        \[
        x,\ldots, x+\alpha, x+\alpha+(k-1),x+\alpha+(k-1)-1,\ldots, x+\alpha+(k-1)-\beta, x+\alpha+2(k-1)-\beta,\ldots, y.\]
    This path is well-defined because all vertices lie in $[n'-1]$.
    Indeed, for $s\geq 2$ the largest term in the sequence is $y$, which lies in $V(P_{n'})$ by assumption, whereas if $s=1$, then $x+\alpha+(k-1)-\beta = y$ and the maximum is $x+\alpha+(k-1)$ which can be bounded from above by $\beta +\alpha + (k-1) \leq 2k-3 \leq n'-1$. 
    Thus, $xy \in E(P^2)$.
    Since $x$ and 
    $y$ were arbitrary, $A'_2 \cap [n'-1] \subseteq D_2$.
    For every $i\geq 3$, the induction hypothesis and Lemma \ref{lem:gen_iteration} imply 
        \[    
        A'_i \cap [n'-1] \subseteq (k-1)A'_{i-1}\cap [n'-1] \subseteq (k-1)D_{i-1} \cap [n'-1] \subseteq D_i ,
        \]
    where the inclusion $A'_i \subseteq (k-1)A'_{i-1}$ follows from the fact that for any $\alpha,\beta\in\NN_0$ with $\alpha+\beta\leq (k-1)^{i-2}\cdot(k-2)$ we can find
        \[
        \alpha_1,\ldots,\alpha_{k-1} \in \left\lbrace \floor*{\frac{\alpha}{k-1}}, \ceil*{\frac{\alpha}{k-1}}\right\rbrace \quad,\quad
        \beta_1,\ldots,\beta_{k-1} \in \left\lbrace \floor*{\frac{\beta}{k-1}}, \ceil*{\frac{\beta}{k-1}}\right\rbrace
        \]
    such that $\alpha = \alpha_1+\ldots+\alpha_{k-1}$, $\beta = \beta_1+\ldots+\beta_{k-1}$ and $\alpha_s+\beta_s \leq (k-1)^{i-3}\cdot (k-2)$, $1\leq s\leq k-1$.
    \end{proof}
    
    \begin{prop}\label{prop:odd_path}
    If $k\geq 3$ is odd and $3(k-1) \leq n' \leq (k-1)^\rho - F(k-2,k)$ for some integer $\rho\geq 4$ then $P_{n'}^\rho$ is the complete graph on $n'$ vertices.
    \end{prop}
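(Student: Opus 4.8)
The plan is to reduce everything to the sets $D_i = D_i(n')$: since $P^\rho_{n'}$ is the complete graph exactly when $D_\rho = [n'-1]$, and since $k$ is odd so $\gcd(k-2,k)=1$ and $F := F(k-2,k) = k^2-4k+2$ by \eqref{eq:Frobenius}, I would prove by induction on $i\ge 2$ the stronger statement
\[
\bigl[\,1,\ \min\{\,n'-1,\ (k-1)^i-F-1\,\}\,\bigr]\ \subseteq\ D_i .
\]
Applying this with $i=\rho$ and using $n'\le (k-1)^\rho-F$ (so $n'-1\le (k-1)^\rho-F-1$) yields $D_\rho=[n'-1]$, which in particular covers all $\rho\ge 4$. (Lemma~\ref{lem:cycles_smalldistance} already guarantees the process terminates at $K_{n'}$; the only issue is the speed.) The base case $i=2$ is the heart of the matter; the inductive step is routine bookkeeping with $A'_i$, $D_i$ and the Frobenius number.

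For the \emph{inductive step} from $i-1$ to $i$ (with $i\ge 3$): if $n'-1\le (k-1)^{i-1}-F-1$ then $D_{i-1}=[n'-1]$ and hence $D_i=[n'-1]$, so assume $D_{i-1}\supseteq[1,(k-1)^{i-1}-F-1]$. The $(k-1)$-fold sumset of this interval is $[\,k-1,\ (k-1)^i-(k-1)(F+1)\,]$, so Lemma~\ref{lem:gen_iteration} together with $[1,k-1]\subseteq D_{i-1}$ gives $[\,1,\ (k-1)^i-(k-1)(F+1)\,]\cap[n'-1]\subseteq D_i$. For the upper end, if $(k-1)^{i-2}+2(k-1)\le \ell\le (k-1)^i-F-1$ then $(k-1)^i-\ell\ge F+1$, so by the definition of $F$ we may write $(k-1)^i-\ell=\alpha(k-2)+\beta k$ with $\alpha,\beta\in\NN_0$; thus $\ell\in A_i$, hence $\ell\in A'_i$ by Lemma~\ref{lem:gen_intervalfromright} (using $i\ge 3$), hence $\ell\in D_i$ for $\ell\le n'-1$ by Lemma~\ref{lem:gen_revers}. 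Finally the (routine) inequality $(k-1)^i-(k-1)(F+1)\ \ge\ (k-1)^{i-2}+2(k-1)-1$, valid for $i\ge 3$ and $k\ge 3$ since $(k-1)^i-(k-1)^{i-2}=(k-1)^{i-2}k(k-2)$ and $k(k-2)-(F+3)=2k-5\ge 0$, shows the lower and upper ranges overlap, giving the claim for $i$.

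For the \emph{base case} $i=2$ one has $(k-1)^2-F-1=2(k-1)<n'-1$, so I must show $[1,2(k-1)]\subseteq D_2$. Lemma~\ref{lem:gen_firstkodd} puts every odd integer of $[1,k]$ into $D_2$, and $k-1\in D_1\subseteq D_2$. A short computation of $A'_2=\{(k-1)^2-\alpha(k-2)-\beta k:\alpha,\beta\ge 0,\ \alpha+\beta\le k-2\}$ shows $[k,2(k-1)]\subseteq A'_2$ (the choice $\alpha+\beta=k-2$ produces all odd values in $[1,2k-3]$ and $\alpha+\beta=k-3$ produces all even values in $[k+1,3k-5]$), so $[k,2(k-1)]\subseteq D_2$ by Lemma~\ref{lem:gen_revers}. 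It remains to place the \emph{even} differences $\ell$ with $2\le\ell\le k-3$ into $D_2$, and here I would argue directly: by the flip symmetry \eqref{eq:defofDi} it suffices, for each $x$ with $2x+\ell\le n'-1$, to find a path of length $k-1$ from $x$ to $x+\ell$ in $P^1_{n'}$. Setting $a:=(k-3+\ell)/2$ (which is $<k-1$), the walk
\[
x,\ \ x+(k-1),\ \ x+k,\ \ \ldots,\ \ x+(k-1)+a,\ \ x+a,\ \ x+a-1,\ \ \ldots,\ \ x+\ell
\]
uses one step of size $k-1$ up, then $a$ unit steps up, then one step of size $k-1$ down, then the remaining unit steps down; it has exactly $k-1$ edges, all present in $P^1_{n'}$, its vertices are pairwise distinct because $a<k-1$ and $\ell\ge 1$, and its largest vertex $x+(k-1)+a\le (n'+3k-6)/2\le n'-1$ thanks to $2x+\ell\le n'-1$ and $n'\ge 3(k-1)$. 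Hence $\{x,x+\ell\}\in E(P^2_{n'})$, so $\ell\in D_2$. Combining the three contributions gives $[1,2(k-1)]\subseteq D_2$.

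The main obstacle is exactly this last point in the base case: the even differences $2\le\ell\le k-3$ lie strictly below $k-1$, so they can never be produced by a $(k-1)$-fold sumset (such a sumset of positive integers is $\ge k-1$), they are not in $A'_2$, and Lemma~\ref{lem:gen_firstkodd} only contributes odd values — so they genuinely need the ad hoc path construction above, whose only delicate feature is verifying that it fits inside $\{0,\dots,n'-1\}$ under the exact hypothesis $n'\ge 3(k-1)$. Once $[1,2(k-1)]\subseteq D_2$ is in hand, the induction runs mechanically.
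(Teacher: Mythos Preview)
Your proof is correct. It uses the same toolkit as the paper (the sets $D_i$ together with Lemmas~\ref{lem:gen_iteration}, \ref{lem:gen_firstkodd}, \ref{lem:gen_intervalfromright} and \ref{lem:gen_revers}), but the organisation is genuinely different. The paper splits $[n'-1]$ into $[3(k-1)]$ and $[3(k-1),n'-1]$, obtains $[k]\subseteq D_3$ by using the odd step $k-2\in D_2$ to build paths in $P^2$, pushes this to $[3(k-1)]\subseteq D_4$ via one application of Lemma~\ref{lem:gen_iteration}, and then covers the large range $[3(k-1),n'-1]$ in a single stroke by writing it as a union of intervals $[(k-1)^{i-2}+2(k-1),(k-1)^i-F-1]$ and applying Lemmas~\ref{lem:gen_intervalfromright} and \ref{lem:gen_revers}. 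You instead set up a uniform induction on $i\ge 2$ with the invariant $[1,\min\{n'-1,(k-1)^i-F-1\}]\subseteq D_i$, and your base case already establishes $[1,2(k-1)]\subseteq D_2$, i.e.\ two steps earlier than the paper's $D_4$. The price for this is the explicit path in $P^1$ you build for the even differences $2\le\ell\le k-3$, which is a slightly different construction from the paper's (yours uses two jumps of size $k-1$ rather than one of size $k-2$), together with the observation that $[k,2(k-1)]\subseteq A'_2$. What you gain is a cleaner inductive structure that makes the role of the Frobenius number transparent at every step; what the paper's version gains is that it avoids checking $[k,2(k-1)]\subseteq A'_2$ and the small-$\ell$ path in $P^1$, at the cost of a slightly later ``base''.
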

        
    \begin{proof}
    Our goal is to show $D_\rho = [n'-1]$. To do so we write $[n'-1]$ as the union
        \[
        [n'-1] = [3(k-1)-1] \;\cup\; [3(k-1),n'-1]
        \]
    and show $[3(k-1)-1] \subseteq D_4$ and $[3(k-1),n'-1]\subseteq D_\rho$.
    Lemma \ref{lem:gen_firstkodd} yields $k-2\in D_2$ because $k-2$ is odd. Then for each even $2\leq\ell \leq k-1$ and each vertex $x$ with $x+(x+\ell) \leq n'-1$,
    \[ 
    x,\ldots,x+\frac{\ell}2, x+(k-2)+\frac{\ell}2, x+(k-2)+\frac{\ell}2-1,\ldots,x+\ell 
    \]
    is a path of length $k-1$ from $x$ to $x+\ell$ in $P_{n'}^2$.
    Here we used  that $x+(k-2)+\ell/2\leq n'-1$, which follows from a quick case analysis of $x\leq k-2$ and $x\geq k-1$,  so all vertices of the path indeed belong to $P_{n'}$.
    Now \eqref{eq:defofDi} implies $[k] \subseteq D_3$. Applying Lemma \ref{lem:gen_iteration} gives 
        \[
        D_4 \supseteq [(k-1)\cdot k]\cap [n'-1] \supseteq [3(k-1)-1].
        \]
    The inclusion $[3(k-1), n'-1] \subseteq D_\rho$ follows from Lemmas \ref{lem:gen_intervalfromright} and \ref{lem:gen_revers}. Indeed, by the definition of the Frobenius number $F(k-2,k)$ \eqref{eq:Frobenius},  
     and as $k$ is odd we have $A_i \supseteq (-\infty, (k-1)^i-F(k-2,k)-1]$ for all $i\geq 0$. This allows us to write
        \begin{linenomath}     \begin{align*}
            [3(k-1),n'-1] &= [n'-1] \cap [3(k-1),(k-1)^\rho-F(k-2,k)-1] \\
            &= [n'-1] \cap \bigcup_{i=3}^\rho [(k-1)^{i-2}+2(k-1), (k-1)^i-F(k-2,k)-1] \\
            &\subseteq [n'-1] \cap \bigcup_{i=3}^\rho [(k-1)^{i-2}+2(k-1), (k-1)^i] \cap A_i \\
            &\subseteq [n'-1] \cap \bigcup_{i=3}^\rho A'_i 
            \subseteq \bigcup_{i=3}^\rho D_i = D_\rho.
        \end{align*} \end{linenomath}
    The second equality holds since $(k-1)^{3-2}+2(k-1) = 3(k-1)$ and $(k-1)^i-F(k-2,k) \geq (k-1)^{i+1-2}+2(k-1)$ for $i\geq 3$ by \eqref{eq:Frobenius}. We also used Lemma \ref{lem:gen_intervalfromright} followed by Lemma \ref{lem:gen_revers} in the fourth line.
    We have shown that $D_\rho = [n'-1]$, so $P_{n'}^\rho$ is a complete graph. 
    \end{proof}

The bipartite version of Proposition \ref{prop:odd_path} reads as follows.
    
    \begin{prop}\label{prop:even_path}
    If $k\geq 4$ is even and $3(k-1) \leq n' \leq (k-1)^\rho$ for some $\rho\in\NN$ then any $x,y \in V(P_{n'})$ with $|x-y|\in A_\rho$ are adjacent in $P_{n'}^\rho$.
    This implies that if $n' \leq (k-1)^\rho - F'(k-2,k)$, $P_{n'}^\rho$ is copy of $K_{\floor{n'/2},\ceil{n'/2}}$.
    \end{prop}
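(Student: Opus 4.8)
The plan is to determine the sets $D_i = D_i(n')$ of Section~\ref{sec:cycles_upper_paths} precisely enough to identify $P_{n'}^\rho$, following the template of the proof of Proposition~\ref{prop:odd_path}. The key differences with the odd case are that Lemma~\ref{lem:gen_fromright} forces $D_i \subseteq \{\text{odd integers}\}$ for all $i \geq 1$ when $k$ is even, so one only ever aims to place \emph{odd} differences into the $D_i$, and that the modified Frobenius number $F'(k-2,k)$ takes over the role of $F(k-2,k)$. Observe at the outset that $\rho \geq 2$ is forced by $3(k-1) \leq n' \leq (k-1)^\rho$.

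The core of the argument is the inclusion $A_\rho \cap [n'-1] \subseteq D_\rho$, and I would begin by recording two preliminary facts. \emph{Fact (i):} $A_2 \cap [n'-1] \subseteq D_2$. Given $\ell \in A_2$ with $1 \leq \ell \leq n'-1$, fix a representation $\ell = (k-1)^2 - \alpha(k-2) - \beta k$ with $\alpha,\beta \in \NN$; if some such representation has $\alpha+\beta \leq k-2$ then $\ell \in A'_2 \cap [n'-1] \subseteq D_2$ by Lemma~\ref{lem:gen_revers}, and otherwise every representation has $\alpha+\beta \geq k-1$, whence $\ell \leq (k-1)^2 - (\alpha+\beta)(k-2) \leq k-1$, so $\ell$ is an odd element of $[k]$ and lies in $D_2$ by Lemma~\ref{lem:gen_firstkodd}. \emph{Fact (ii):} for every $i \geq 0$, $A_i$ contains every odd integer $\ell \leq (k-1)^i - F'(k-2,k) - 2$; indeed $(k-1)^i - \ell$ is then a positive even integer exceeding $F'(k-2,k) = 2F\bl\tfrac{k-2}{2},\tfrac{k}{2}\br$, so $\tfrac12\bl(k-1)^i - \ell\br$ is a nonnegative combination of $\tfrac{k-2}{2}$ and $\tfrac{k}{2}$ and $\ell \in A_i$ follows on doubling. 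By the same token $F'(k-2,k)$ is not representable, so $(k-1)^i - F'(k-2,k) \notin A_i$.

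Then, given $\ell \in A_\rho \cap [n'-1]$, I put $m := (k-1)^\rho - \ell \geq 0$. Since $m$ is representable while $F'(k-2,k)$ is not, and both are even, either $m \leq F'(k-2,k) - 2$ or $m \geq F'(k-2,k) + 2$. In the first case any representation $m = \alpha(k-2) + \beta k$ obeys $\alpha + \beta \leq m/(k-2) \leq F'(k-2,k)/(k-2) < k/2 \leq (k-1)^{\rho-2}(k-2)$, so $\ell \in A'_\rho \cap [n'-1] \subseteq D_\rho$ by Lemma~\ref{lem:gen_revers}. In the second case $\ell$ is an odd integer with $\ell \leq (k-1)^\rho - F'(k-2,k) - 2$; let $i \leq \rho$ be minimal with $\ell \leq (k-1)^i - F'(k-2,k) - 2$. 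If $i \leq 2$ then $\ell \in A_2$ by Fact~(ii), so $\ell \in A_2 \cap [n'-1] \subseteq D_2 \subseteq D_\rho$ by Fact~(i). If $i \geq 3$, minimality gives $\ell > (k-1)^{i-1} - F'(k-2,k) - 2$, which by \eqref{eq:FrobeniusDash} and $i \geq 3$ forces $\ell \geq (k-1)^{i-2} + 2(k-1)$, while Fact~(ii) gives $\ell \in A_i$; hence $\ell \in [(k-1)^{i-2}+2(k-1), (k-1)^i] \cap A_i \subseteq A'_i$ by Lemma~\ref{lem:gen_intervalfromright}, so $\ell \in A'_i \cap [n'-1] \subseteq D_i \subseteq D_\rho$ by Lemma~\ref{lem:gen_revers}. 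This establishes the first assertion.

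For the second assertion, suppose $n' \leq (k-1)^\rho - F'(k-2,k)$. Every odd $\ell \in [n'-1]$ then satisfies $\ell \leq (k-1)^\rho - F'(k-2,k) - 1$, hence (by parity, as $(k-1)^\rho - F'(k-2,k)$ is odd) $\ell \leq (k-1)^\rho - F'(k-2,k) - 2$, so $\ell \in A_\rho$ by Fact~(ii) and $\ell \in D_\rho$ by the first assertion. Thus $P_{n'}^\rho$ contains every edge between the colour classes $\{0,2,4,\dots\}$ and $\{1,3,5,\dots\}$ of $P_{n'}$, i.e. a copy of $K_{\floor{n'/2},\ceil{n'/2}}$; since $P_{n'}$ is bipartite with these classes and $k$ is even, $\final{P_{n'}}_{C_k}$ stays bipartite with the same classes by Lemma~\ref{lem:cycles_bipartiteness}, so $K_{\floor{n'/2},\ceil{n'/2}} \subseteq P_{n'}^\rho \subseteq \final{P_{n'}}_{C_k} \subseteq K_{\floor{n'/2},\ceil{n'/2}}$, forcing $P_{n'}^\rho = K_{\floor{n'/2},\ceil{n'/2}}$. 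I expect the main difficulty to lie in the ``large $\ell$'' regime — the elements of $A_\rho$ near $(k-1)^\rho$, which have no analogue in Proposition~\ref{prop:odd_path} because there $n'$ is capped at $(k-1)^\rho - F(k-2,k)$ — together with the elementary but delicate inequalities at level $i=3$, where $k=4$ is a genuine boundary case since $F'(2,4) = -2$ is negative.
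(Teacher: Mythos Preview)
Your proof is correct and follows essentially the same strategy as the paper: cover small differences via Lemma~\ref{lem:gen_firstkodd}, cover the remaining ones by locating each $\ell$ in an interval $[(k-1)^{i-2}+2(k-1),(k-1)^i]$ and invoking Lemmas~\ref{lem:gen_intervalfromright} and~\ref{lem:gen_revers}, and handle the narrow band $\ell>(k-1)^\rho-F'(k-2,k)$ separately using the hypothesis $\ell\in A_\rho$ directly. The only organisational difference is that your Fact~(i) establishes $A_2\cap[n'-1]\subseteq D_2$ outright (combining $A'_2\subseteq D_2$ from Lemma~\ref{lem:gen_revers} with Lemma~\ref{lem:gen_firstkodd} for the residual $\ell\leq k-1$), whereas the paper pushes the small differences one step further to $D_3$ via Lemma~\ref{lem:gen_iteration}; your version is marginally cleaner and in particular works uniformly for $\rho=2$. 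Two cosmetic points: in Fact~(i) you mean $\alpha,\beta\in\NN_0$, and in Fact~(ii) ``positive'' should be ``non-negative'' to accommodate $\ell=(k-1)^i$ when $k=4$.
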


    \begin{proof}
    Since $n' \geq 3(k-1)$ we may invoke Lemma \ref{lem:gen_firstkodd} to obtain
        \[
        \{ \ell \in [k] : \ell \text{ odd} \} \subseteq D_2.
        \]
    Lemma \ref{lem:gen_iteration} then gives 
        \[
        \{ \ell \in [3(k-1)-1] : \ell \text{ odd} \} \;\subseteq\; \{ \ell \in [(k-1)^2] : \ell \text{ odd} \} \cap [n'-1] \subseteq D_3 \subseteq D_\rho.
        \]
    This takes care of the case $|x-y| < 3(k-1)$.
    Suppose that $|x-y| \geq 3(k-1)$.

    The intervals $[(k-1)^{i-2}+2(k-1), (k-1)^i-F'(k-2,k)]$ and $[(k-1)^{i-1}+2(k-1), (k-1)^{i+1}-F'(k-2,k)]$ intersect whenever $i\geq 3$.
    Therefore by \eqref{eq:FrobeniusDash}
        \[
        [3(k-1),(k-1)^\rho] = [(k-1)^\rho - F'(k-2,k), (k-1)^\rho] \cup \bigcup_{i=3}^{\rho} [(k-1)^{i-2}+2(k-1), (k-1)^i-F'(k-2,k)-1].
        \]
    If $(k-1)^\rho - F'(k-2,k) < |x-y|$ we can use Lemma \ref{lem:gen_intervalfromright} because $|x-y|\in A_\rho$ and thereby obtain $|x-y|\in A'_\rho$.  Lemma \ref{lem:gen_revers} then tells us that $|x-y| \in D_\rho$.
    If not, there exists $3\leq i\leq \rho$ such that $|x-y| \in [(k-1)^{i-2}+2(k-1), (k-1)^i-F'(k-2,k)-1]$ so we have $|x-y| \in A_i$ by definition of $F'(k-2,k)$ and we can apply Lemmas \ref{lem:gen_intervalfromright} and \ref{lem:gen_revers} to conclude $|x-y|\in D_i \subseteq D_\rho$.
    In the case that $n' \leq (k-1)^\rho - F'(k-2,k)$ a copy of $K_{\floor{n'/2},\ceil{n'/2}}$ is present at time $\rho$. Since $P_{n'}$ is bipartite, so is $\final{P_{n'}}_{C_k}$ due to Lemma \ref{lem:cycles_bipartiteness}. Thus the process has stabilised.
    \end{proof}

This completes our preliminary investigation of the $C_k$-process on paths.

\subsection{Proof of parts \ref{upperpart1} and \ref{upperpart2} of Theorem \ref{thm:strategy_upperbound}}\label{sec:cycles_upper_part_i_and_ii}
Suppose that $k$ is odd, and let $x,y\in V(G)$.
If there exists an $xy$-path $Q$ of length at least $3(k-1)-1$ it satisfies the hypotheses of Proposition \ref{prop:odd_path} with $s=r$ and $n'$ being the length of $Q$.
In that case the vertices of $Q$ must form a clique in $G_r$.
In particular $x$ and $y$ are adjacent.
If every path from $x$ to $y$ in $G$ has length less than $3(k-1)-1$, invoke Lemma \ref{lem:cycles_cycleattimetwo} to fix a cycle $C\subseteq G_2$ containing $x$, a shortest path $Q$ from $y$ to $V(C)$ in $G$ and an arbitrary vertex $z\in V(G)\setminus V(C)$ with $N_G(z) \cap V(C) \neq \varnothing$.
The latter vertex exists because $G$ was assumed to be connected, and is needed because we cannot rule out that $y \in Q$ and Lemma \ref{lem:cycles_smalldistance} requires a $(k+1)$-vertex graph.
Apply Lemma \ref{lem:cycles_smalldistance} to $G_2[V(C)\cup V(Q) \cup \{z\}]$.
As $\tau_{C_k}(G_2[V(C)\cup V(Q) \cup \{z\}])$ is trivially bounded by $(|V(C)|+|V(Q)|+1)^2/2$ we obtain that $xy\in E(G_r)$ when $n$ and hence $r$ is sufficiently large.

We will see that part \ref{upperpart2} is analogous to part \ref{upperpart1} with the roles of cliques being played by complete bipartite graphs. So suppose now  that $k$ is even and $G$ is bipartite with partite sets $X$, $Y$, and let $x\in X$, $y\in Y$.
If there is a path of length at least $3(k-1)-1$ from $x$ to $y$, then Proposition \ref{prop:even_path} with $\rho=r$ and $n'$ the length of that path implies $xy \in E(G_r)$.
Should there be no such path, Lemma \ref{lem:cycles_cycleattimetwo} again allows us to choose a cycle $C\subseteq G_2$ containing $x$, a shortest path $Q$ from $y$ to $V(C)$ in $G$, and $z\in V(G)\setminus V(C)$ such that $N_G(z) \cap V(C) \neq \varnothing$.
Lemma \ref{lem:cycles_smalldistance} applied to $G_2[V(C)\cup V(Q)\cup \{z\}]$ tells us that at time $r$ each vertex of $X\cap (V(C\cup Q)\cup \{z\})$ neighbours each vertex of $Y\cap (V(C\cup Q)\cup \{z\})$.

\subsection{Proof of part \ref{upperpart3} of Theorem \ref{thm:strategy_upperbound}}\label{sec:cycles_upper_part_iii}
Assume that in the following $k$ is even and $G$ is not bipartite.    
When we dealt with the upper bound for odd cycles and wanted to show that an edge $xy$ from the final graph occurs at a certain time in the process it was sufficient to restrict ourselves to an $xy$-path in the starting graph. 
In the case of even $k$ and non-bipartite $G$ one has to modify the approach since all $xy$-paths in $G$ could have even length while the final graph of the $C_k$-process on a path is not a clique but a complete bipartite graph (cf. Proposition \ref{prop:even_path}), and thus the restricted $C_k$-process does not yield the desired edge.
To deal with this issue we consider carefully chosen odd walks instead of odd paths. These odd walks  will be chosen so that they contain sufficiently long subwalks without repeated vertices.
More precisely we restrict our attention to odd walks which can be expressed as the union of two paths as specified in the next claim:

    \begin{clm}\label{clm:even_shortestwalk}
    Let $e\in E(\final{G}_{C_k})$.
    Then there exist $\ell,\ell'\in\NN_0$ with $\ell'\leq\ell\leq n-2$, $\ell'\leq n-3$, and vertices $v_0,\ldots,v_\ell,w_0,\ldots,w_{\ell'}\in V(G)$ such that $w_0v_0\ldots v_\ell$ and $v_0w_0\ldots w_{\ell'}$ are paths, $v_\ell\ldots v_0w_0\ldots w_{\ell'}$ is a shortest odd walk between the endpoints of $e$ in $G$, and $v_{j}\neq w_{j'}$ for $j\neq j'$.
    \end{clm}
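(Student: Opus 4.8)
The plan is to start from \emph{any} shortest odd walk between the endpoints of $e$ in $G$ and then repeatedly simplify it until it has the claimed two-path structure. Such an odd walk exists because $e\in E(\final{G}_{C_k})$ and the final graph preserves the distance relations of $G$ up to the usual factor (by Lemma~\ref{lem:cycles_distance}), so in particular the two endpoints of $e$ must be joined by a walk of odd length in $G$; non-bipartiteness of $G$ guarantees odd closed walks exist, so some odd walk joining the endpoints is finite and we may pick a shortest one, say $W = u_0 u_1 \ldots u_m$ with $m$ odd. First I would argue that $W$ cannot repeat a vertex ``too soon'': if $u_a = u_b$ with $a<b$, then deleting the segment $u_{a+1}\ldots u_b$ yields a shorter walk between the same endpoints, and it has the same parity if $b-a$ is even — so by minimality every repetition $u_a=u_b$ must have $b-a$ odd. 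In particular no two consecutive vertices coincide and $W$ traverses genuine edges.

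Next I would locate the ``turning point'' of the walk. Since $W$ is a shortest odd walk, I claim it decomposes as a path followed by a path sharing exactly the initial vertex of the second piece; concretely, let $w_0 := u_0$ be one endpoint and track the first index $t$ at which $u_t$ first revisits a vertex already seen, i.e. the smallest $t$ with $u_t \in \{u_0,\ldots,u_{t-1}\}$. Write $u_t = u_s$ for the (necessarily unique, by minimality) earlier occurrence with $s<t$; then $t-s$ is odd by the previous paragraph. Set $v_0 := u_s = u_t$ and let the two paths be $v_0 v_1 \ldots v_\ell := u_s u_{s-1}\ldots u_0$ reversed appropriately and $v_0 w_0' \ldots$ analogously — I would set this up so that the prefix $u_0\ldots u_s$ and the suffix $u_t\ldots u_m$ are each genuine paths (no internal repetitions, again by minimality of $W$), meeting only at $v_0$. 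Relabel: $v_0\ldots v_\ell$ is one of these paths and $w_0\ldots w_{\ell'}$ the other, oriented so that $v_\ell\ldots v_0 w_0\ldots w_{\ell'}$ reads off $W$ (possibly after swapping which endpoint we call the $v$-side versus $w$-side); the middle segment $u_s\ldots u_t$ of odd length is what makes the whole walk odd, and I would absorb it by noting a shortest odd walk that is not already a single path must "close off" exactly one odd cycle, so we may assume that cycle sits at the junction $v_0$.

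To get the bounds and the disjointness $v_j\neq w_{j'}$ for $j\neq j'$: minimality of $W$ forces $v_i\neq w_j$ unless $i=j$ (a coincidence $v_i=w_j$ with $i\neq j$ would let us shortcut the walk while preserving parity, by a case check on the parity of $i+j$ relative to the odd middle segment), and also forces $v_0\ldots v_\ell$ and $w_0\ldots w_{\ell'}$ to be paths. For the length bounds, $v_\ell\ldots v_0 w_0\ldots w_{\ell'}$ visits at most $n$ distinct vertices; since the $v_i$ are distinct, the $w_j$ are distinct, and $v_i=w_j$ only when $i=j$, the total vertex count is $\max(\ell,\ell')+1 \le n$ if we WLOG take $\ell\ge\ell'$; a slightly more careful count using that $v_0=w_0$ and at least one of $v_\ell,w_{\ell'}$ is distinct from everything gives $\ell\le n-2$ and $\ell'\le n-3$ after swapping so that the $v$-side is the longer one. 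I expect the \textbf{main obstacle} to be the bookkeeping in the very first reduction step — showing that a shortest odd walk \emph{must} have exactly this "two paths glued at a single vertex" shape rather than something messier like two paths glued along a shared sub-path, and pinning down precisely which endpoint plays which role so that all the parity conditions ($\ell'\le\ell$, the odd-length middle, $v_j\neq w_{j'}$) line up simultaneously. The right way to handle this cleanly is probably to phrase minimality as: among all odd walks between the endpoints of $e$, pick one minimizing first the length and then the number of repeated vertices, and then show any remaining repeated vertex can only be the single junction $v_0$.
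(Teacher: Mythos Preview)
Your proposal has a genuine structural gap: you place the pivot at the repeated vertex $u_s=u_t$, but this cannot produce the required decomposition. If you set $v_0\ldots v_\ell := u_s u_{s-1}\ldots u_0$ and $w_0\ldots w_{\ell'} := u_t u_{t+1}\ldots u_m$, then $v_0=u_s=u_t=w_0$, so $w_0v_0$ is not an edge and $w_0v_0\ldots v_\ell$ is not a path as required. Worse, the concatenation $v_\ell\ldots v_0 w_0\ldots w_{\ell'}$ has only $s+(m-t)+1 = m-(t-s)+1 < m$ edges (since $t-s\ge 3$ is odd), so it cannot be a shortest odd walk of length $m$. You cannot ``absorb'' the odd cycle segment $u_s\ldots u_t$; every one of its edges must appear in the walk $v_\ell\ldots v_0 w_0\ldots w_{\ell'}$.

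The paper's proof resolves this by first establishing that there is exactly \emph{one} repeated vertex, at positions $j_0<j_1$ with $j_1-j_0$ odd, and then pivoting at the \emph{midpoint} of the cycle segment: $v_j := u_{\lfloor(j_0+j_1)/2\rfloor - j}$ and $w_{j'} := u_{\lceil(j_0+j_1)/2\rceil + j'}$. This symmetric split is what makes the whole argument work. With the midpoint pivot, the repeated vertex sits at the \emph{same} index $\lfloor (j_1-j_0)/2\rfloor$ on both the $v$-side and the $w$-side, so the only coincidence $v_j=w_{j'}$ allowed is $j=j'$; any other coincidence would, via the symmetry of positions, allow replacing the longer of two sub-walks by the shorter one to produce a shorter odd walk. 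Your parity-shortcut sketch for $v_i\ne w_j$ when $i\ne j$ is the right idea, but it only goes through once the pivot is placed symmetrically --- with your asymmetric pivot there is no reason a coincidence $v_i=w_j$ should translate into a parity-preserving shortcut.
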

    
    \begin{proof}
    Take a shortest odd walk $u_0\ldots u_m$ between the endpoints of $e$ in $G$. The claim is clearly satisfied with $\ell'=0$ and $\ell=m-1$ when the shortest odd walk is already a path. 
    We therefore assume that the walk has at least one repeated vertex. Observe that for any $0 \leq j < j'\leq m$ with $u_j = u_{j'}$, $j-j'$ must be odd for otherwise $u_0\ldots u_j u_{j'+1}\ldots u_m$ would be a shorter odd walk. 
    This implies that no vertex occurs more than twice on $u_0\ldots u_m$.    
    Set
        \begin{linenomath}     \begin{align*}
        j_0 &:= \max\{ j \in [m] \mid \exists j'>j : u_j = u_{j'} \} \qquad \mbox{ and } \qquad
        j_1 &:= \min\{ j \in [m] \mid \exists j'<j : u_j = u_{j'} \},
        \end{align*} \end{linenomath}
    and let $j'_0,j'_1\in [m]$ be the unique integers satisfying $j'_0 > j_0, u_{j_0} = u_{j'_0}$ and $j'_1 < j_1, u_{j_1} = u_{j'_1}$.
    Then $j_0 < j_1$ since otherwise $u_0\ldots u_{j'_1-1}u_{j_1}\ldots u_{j_0}u_{j'_0+1}\ldots u_m$ would be an odd walk of length less than $m$.
    By the extremality of $j_0$ and $j_1$ we have $j'_1 \leq j_0$ and $j_1 \leq j'_0$.
    In fact, we have that equality is attained in both of the last two inequalities. 
    Indeed, if one of them was strict the walk $u_0\ldots u_{j'_1-1}u_{j_1}\ldots u_{j_0}u_{j'_0+1}\ldots u_m$ would have length
        \[
        j'_1 + (j_1-j_0) + m-j'_0 < j'_1 + (j'_0-j'_1) + m - j'_0 = m
        \]
    so
        \[
        0 \equiv j'_1 + (j_1-j_0) + m-j'_0 \equiv (j_1-j'_1) + (j'_0-j_0) +m \mod 2
        \]
    by the minimality of $m$. But this would contradict the fact that $j_1-j'_1$, $j'_0-j_0$ and $m$ are all odd.
    Therefore $j'_1 = j_0$ and $j'_0 = j_1$, which implies $u_{j_0} = u_{j_1}$. We conclude $j_1-j_0 \geq 3$  and $j_1+j_0=j_1-j_0+2j_0$ is odd because $j_1-j_0$ must be odd. 
    By definition of $j_0$ and $j_1$, both $u_0\ldots u_{j_1-1}$ and $u_{j_0+1}\ldots u_m$ are paths and each of the vertices $u_{j_0+1},\ldots,u_{j_1-1}$ occurs precisely once on $u_0\ldots u_m$.
    Define $v_j$, $0\leq j \leq \ell:=\floor{(j_0+j_1)/2}$, and $w_{j'}$, $0\leq j' \leq \ell' := m - \ceil{(j_0+j_1)/2}$ by
        \[
        v_j := u_{\floor*{\frac{j_0+j_1}2}-j} \quad,\quad w_{j'} := u_{\ceil*{\frac{j_0+j_1}2}+j'}.
        \]
    Then both $v_\ell \ldots v_0w_0$ and $v_0w_0\ldots w_{\ell'}$ are paths. Therefore $\ell,\ell' \leq n-2$. Now we check that $\ell' \leq \ell$ and $v_j\neq w_{j'}$ whenever $j\neq j'$.
    Suppose there were $j\neq j'$ with $v_j = w_{j'}$.
    Due to the definition of $v_j, w_{j'}$ and the minimality of $m$ we have
       \[
       \left(\floor*{\frac{j_0+j_1}2}-j\right) - \left(\ceil*{\frac{j_0+j_1}2}+j'\right) \equiv 1 \mod 2,
      \]
    and thus, as $j_0 - j_1 \equiv 1 \!\mod 2$, 
    	\[
    	j_0 - \floor*{\frac{j_0+j_1}2}+j \equiv \ceil*{\frac{j_0+j_1}2}+j'-j_1 \mod 2.
    	\]
    But then replacing the longer of the two walks $u_{j_0}\ldots u_{\floor*{\frac{j_0+j_1}2}-j}$ and $u_{j_1}\ldots u_{\ceil*{\frac{j_0+j_1}2}+j'}$ by the shorter one creates an odd walk between the endpoints of $e$ whose length is less than $m$. Here we used these walks are not the same size due to the fact that $j\neq j'$. 
    If $\ell' \leq \ell$ we are done.
    Otherwise we simply relabel the path by interchanging the roles of $\ell$ and $\ell'$ and turning $v_i$ into $w_i$ and vice versa.  Finally,  we cannot have $\ell' = \ell = n-2$ as in that case $w_{\ell'}\notin \{ v_0,\ldots,v_\ell \}$ gives $|\{ v_0,\ldots,v_\ell,w_0,w_{\ell'} \}| = n+1$.
    \end{proof}

    \begin{rem} \label{rem:no intersect}
    The property $v_j \neq w_{j'}$ for $j\neq j'$ in Claim \ref{clm:even_shortestwalk} guarantees that $\{ v_j : j\in J\}\cap \{ w_{j'} : j'\in J' \} = \emptyset$ whenever $J\subset[\ell]$ and $J'\subset[\ell']$ are disjoint.
    We do not require additional assumptions on the indices $j$ for which $v_j=w_j$.
    However, note that we can assume that there is  $\ell_0, \ell_1\in[\ell']$ such that $v_j = w_j$ when $\ell_0 \leq j \leq \ell_1$ and $v_j\neq w_j$ otherwise.
    Indeed,  we  can take $\ell_0 := \min\{j: v_j = w_j\}$ and $\ell_1 := \max\{j: v_j = w_j\}$ and replace $w_{\ell_0}\ldots w_{\ell_1}$ by $v_{\ell_0}\ldots v_{\ell_1}$.
    Two examples of such odd walks are depicted in Figure \ref{fig:oddwalks}.
    \end{rem}

    \begin{figure}[h]
    \centering
    \includegraphics[width=\linewidth]{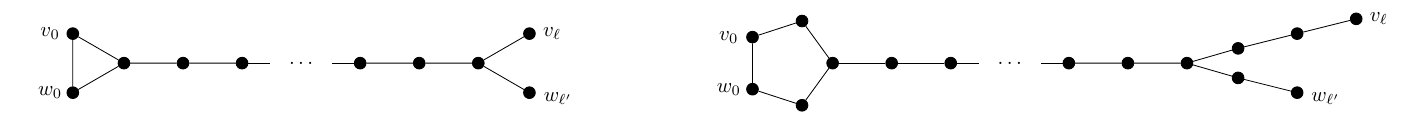}
    \caption{Two potential shortest odd walks between vertices $v_\ell$ and $w_{\ell'}$.}
    \label{fig:oddwalks}
    \end{figure}

Let $x,y\in V(G)$ be distinct vertices.
If the length of a shortest odd walk between them is smaller than $4(k-1)^2$ we can fix a non-bipartite subgraph of $G_2$ on at least $k+1$ vertices that contains $x$, $y$ and a $k$-cycle (the existence of such a subgraph is guaranteed by Lemma \ref{lem:cycles_cycleattimetwo}) and apply Lemma \ref{lem:cycles_smalldistance}.
From now on let the length of a shortest odd walk from $x$ to $y$ in $G$ be at least $4(k-1)^2$.
We are done once we have shown $xy\in E(G_r)$.
Let $v_\ell\ldots v_0w_0\ldots w_{\ell'}$ be a shortest odd walk from $x$ to $y$ or $y$ to $x$ as given by Claim \ref{clm:even_shortestwalk}. Note that $\ell \equiv \ell' \mod 2$ and $v_\ell \neq w_{\ell'}$ because the endpoints of the path are assumed to be distinct. The remaining proof is divided into the Claims \ref{clm:even_midwalk}, \ref{clm:even_midedge} and \ref{clm:even_longwalk}.
    
    \begin{clm}\label{clm:even_midwalk}
    If $\ell+\ell' \leq (k-1)^{r}-(k-1)\cdot F'(k-2,k) - 3(k-1)^2$, we have $xy\in E(G_{r})$.
    \end{clm}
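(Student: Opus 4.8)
The plan is to treat the odd walk $v_\ell\ldots v_0 w_0\ldots w_{\ell'}$ as two paths glued at the edge $v_0w_0$ and to run the $C_k$-process on each half separately, using the path results from Section \ref{sec:cycles_upper_paths} together with the no-intersection property (Remark \ref{rem:no intersect}) to combine them. Concretely, set $P := w_0 v_0 v_1\ldots v_\ell$ and $P' := v_0 w_0 w_1\ldots w_{\ell'}$; these are genuine paths in $G$ on $\ell+2$ and $\ell'+2$ vertices respectively, sharing only the edge $v_0w_0$. By Observation \ref{obs:hom} the (injective) inclusions $P\hookrightarrow G$ and $P'\hookrightarrow G$ are preserved, so at each time $i$ the graph $G_i$ contains the $i$-th graph of the $C_k$-process on $P$ and on $P'$. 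The goal is to show that at time $r$ the vertex $v_\ell$ (an endpoint of $P$) is joined in $G_r$ to $w_{\ell'}$ (an endpoint of $P'$), since $\{v_\ell,w_{\ell'}\}=\{x,y\}$.

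First I would handle the two halves. Because $\ell+\ell'\le (k-1)^r-(k-1)F'(k-2,k)-3(k-1)^2$ and $\ell'\le\ell$, one has $\ell\le \tfrac12\left((k-1)^r-(k-1)F'(k-2,k)\right)-\tfrac32(k-1)^2$ roughly, so in particular $\ell+2$ is well within the range $(k-1)^{r-1}$ needed to apply Proposition \ref{prop:even_path} with $\rho = r-1$ (after checking $\ell+2\ge 3(k-1)$, which holds since the shortest odd walk has length at least $3(k-1)^2$ so $\ell\ge\tfrac{3}{2}(k-1)^2-1$). Thus in the $C_k$-process on $P$, by time $r-1$ the path $P$ has become the complete bipartite graph between its two parity classes; in particular $w_0$ is joined to every $v_j$ with $j$ odd and $v_0$ is joined to every $v_j$ with $j$ even — crucially $v_\ell$ lands in the part \emph{not} containing $v_0$ precisely when $\ell$ is odd, and in the same part as $v_0$ when $\ell$ is even. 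The same applies to $P'$. So at time $r-1$, inside $G_{r-1}$, we have a complete bipartite graph on $\{v_0,\ldots,v_\ell\}$ and a complete bipartite graph on $\{w_0,\ldots,w_{\ell'}\}$, overlapping only in the edge $v_0w_0$.

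Now I would produce the edge $v_\ell w_{\ell'}$ in one more step. Using the two complete bipartite graphs and the connecting edge $v_0w_0$, together with Remark \ref{rem:no intersect} to guarantee that the vertices used are genuinely distinct, one builds a path of length $k-1$ in $G_{r-1}$ from $v_\ell$ to $w_{\ell'}$: start at $v_\ell$, alternate back and forth inside the first complete bipartite block to traverse an appropriate number $a$ of edges landing at either $v_0$ or $v_1$ (parity permitting, since within a complete bipartite graph any odd length between endpoints in opposite parts, or even length between endpoints in the same part, is realisable, and the block has enough vertices), cross to the $w$-side via $v_0w_0$, then alternate inside the second block to traverse $k-1-a-1$ edges ending at $w_{\ell'}$. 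The parity bookkeeping works out because the original walk was odd: $\ell\equiv\ell'\bmod 2$, and one checks that $(k-1)$ edges split as (edges ending at $v_0$ or $v_1$) $+$ $1$ $+$ (edges ending at $w_{\ell'}$) with the right parities. Since the blocks have size at least $3(k-1)$, there is always enough room to realise these lengths without repeating a vertex within a block, and Remark \ref{rem:no intersect} rules out clashes between blocks. Hence $v_\ell w_{\ell'}\in E(G_r)$, i.e. $xy\in E(G_r)$.

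\textbf{Main obstacle.} The delicate point is the parity/length accounting in the final step: one must verify that a $v_\ell$--$w_{\ell'}$ path of \emph{exactly} length $k-1$ can be assembled from the two complete bipartite pieces plus the bridge $v_0w_0$, for the given parities of $\ell,\ell',k$, and that each block is large enough (size $\ge$ the required sub-length, ensured by $\ell,\ell'\ge\tfrac32(k-1)^2-1$) to realise its portion as an actual path. The even/odd case split of $k$ enters here, but since $k$ is even and $k-1$ odd, and $\ell\equiv\ell'$, the arithmetic is forced and should go through; it is essentially the same "realise any admissible length inside $K_{a,b}$" argument used implicitly in Lemma \ref{lem:cycles_completebipartitetoclique}. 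The remaining inequalities (that $\rho=r-1$ is a legal choice in Proposition \ref{prop:even_path}, i.e. $\ell+2,\ell'+2\le (k-1)^{r-1}$, which follows from the hypothesis after absorbing the $(k-1)F'$ and $3(k-1)^2$ terms) are routine.
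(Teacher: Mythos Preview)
There is a genuine gap, and it lies exactly where you did \emph{not} flag a concern: the size bound on~$\ell$.

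From $\ell'\le\ell$ and $\ell+\ell'\le M$ (with $M=(k-1)^r-(k-1)F'(k-2,k)-3(k-1)^2$) you conclude $\ell\le M/2$, but the inequality goes the other way: what follows is $\ell'\le M/2$, while $\ell$ can be as large as~$M$ (take $\ell'=0$). Moreover, even if $\ell\le M/2$ held, note that $M/2\approx \tfrac12(k-1)^r$, which for $k\ge 4$ is far larger than $(k-1)^{r-1}$; so the hypothesis $n'\le (k-1)^{r-1}-F'(k-2,k)$ of Proposition~\ref{prop:even_path} with $\rho=r-1$ is not met by the longer half~$P$ in any case. Hence you cannot conclude that $P$ has become complete bipartite at time $r-1$, and the final ``one more step'' construction collapses. (Your lower bound $\ell'\ge\tfrac32(k-1)^2-1$ is also unjustified: $\ell'$ may well be~$0$.)

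The paper's proof avoids this obstruction by reversing the order of operations: it spends \emph{one} step first, using only $\{1,k-1\}\subset D_1$, to replace each half by a carefully chosen path of length $\approx \ell/(k-1)$ (resp.\ $\ell'/(k-1)$) in $G_1$. The two shortened paths are made genuinely vertex-disjoint by selecting their vertex indices in distinct residue classes modulo $k-1$ (this is where Remark~\ref{rem:no intersect} is actually used). Their union together with $v_0w_0$ is then an honest odd $xy$-path in $G_1$ of length at most $\tfrac{\ell+\ell'}{k-1}+O(k)<(k-1)^{r-1}-F'(k-2,k)$, and Proposition~\ref{prop:even_path} with $\rho=r-1$ applied to \emph{that} path gives $xy\in E(G_r)$. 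The essential idea you are missing is this preliminary factor-$(k-1)$ compression done in a disjoint fashion.
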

    
    \begin{proof}
    Write $\ell'=q'(k-1)+s'$ and $\ell = q(k-1)+s$ and where $q,q',s,s'\in\NN_0$, $0 \leq s, s' \leq k-2$.  Recall that $\ell \geq \ell'$ and $\ell+\ell' \geq 4(k-1)^2-1$, hence $q \geq q'$ and $q > 1$.
    At time $1$,
        \[
        P := w_0\ldots w_{s'}w_{s'+(k-1)}\ldots w_{s'+q'(k-1)}
        \]
    is a path of length $q'+s'$ from $w_0$ to $w_{\ell'}$. It contains neither $v_0$ nor $v_\ell$ as $\ell'\leq\ell$, $w_{\ell'}\neq v_{\ell}$, and $w_j\notin \{v_0, v_\ell\}$ for $j<\ell$ by Claim \ref{clm:even_shortestwalk}.
    If $s'=0$,
        \[
        Q_0 := v_0v_{1}v_{1+(k-1)}\ldots v_{1+q(k-1)}v_{1+q(k-1)+1}\ldots v_{1+q(k-1)+s-1}
        \]
    is a path of length $q+s$ from $v_0$ to $v_\ell$ which is vertex-disjoint from $P$ as the indices of the vertices on $P$ are multiples of $k-1$ whereas the indices of vertices on $Q_0-v_0-v_\ell$ are not.
    The union of $P$, $Q_0$ and $v_0w_0$ is an $xy$-path  in $G_1$ of length $q'+(q+s)+1$. Note that
        \[
        q'+(q+s)+1 \equiv q'(k-1)+ q(k-1)+s+1 \equiv (\ell - \ell'+1) \equiv 1 \mod 2
        \]
    and
        \[
        q' + q+s + 1 \leq \frac{\ell'+\ell}{k-1} + k - 1 \leq (k-1)^{r-1} - F'(k-2,k) - 2(k-1) < (k-1)^{r-1} - F'(k-2,k).
        \]
   
    If $s' >0$, recall that $q>1$ and consider the path
        \[
        Q_{s'} := \begin{cases}
        v_0v_{k-1}\ldots v_{q(k-1)}v_{q(k-1)-1}\ldots v_{(q-1)(k-1)+s}v_\ell &, \; \text{if } s > s' ; \\
        v_0v_{k-1}\ldots v_{q(k-1)}v_{q(k-1)+1}\ldots v_\ell &, \; \text{if } s\leq s'.
        \end{cases}
        \]
    This path has length $ q+(k-1)-s+1$ or $q+s$.
    It is vertex-disjoint from $P$ since $w_j \equiv s'\mod k-1$ for all $j\geq s'$ with $w_j \in V(P)$, whereas all $j\in [0,\ell']$ with $v_j \in V(Q_{s'})\setminus\{v_0,v_\ell\}$ satisfy $j > s'$ and $j\not \equiv s'\mod k-1$.
    Moreover, recall that $v_0$ and $v_\ell$ do not lie on $P$.
        
    As $v_0w_0 \in E(G_1)$, the union of the paths $P$,$Q_{s'}$ and the edge $v_0w_0$ is an $xy$-path of length $q'+s'+q+k-s+1$ or $q'+s' + q + s+1$, where
        \[
         q'+s' + q+k- s + 1\;\equiv\; q'+s'+q+s  + 1\;\equiv\; \ell+\ell'+1 \mod 2.
        \]
    The length of $P\cup Q_{s'} \cup \{v_0w_0\}$ is bounded from above by
        \[
        \frac{\ell+\ell'}{k-1} + 2(k-2) + 1 < (k-1)^{r-1} - F'(k-2,k).
        \]
    In all cases we have an odd $xy$-path of length at least $q+q'$ and less than $(k-1)^{r-1}- F'(k-2,k)$. Using
        \[
        q + q' = \floor*{\frac{\ell}{k-1}} + \floor*{\frac{\ell'}{k-1}} \geq \floor*{\frac{\ell+\ell'}{k-1}} - 1 \geq 3(k-1)
        \]
    we can apply Proposition \ref{prop:even_path} with $\rho=r-1$ and Observation \ref{obs:hom} to either $P \cup Q_0 \cup \{ v_0w_0\}$ or $P\cup Q_{s'}\cup \{v_0w_0\}$ to deduce $xy\in E(G_{1+r-1}) = E(G_r)$.
    \end{proof}

    \begin{clm}\label{clm:even_midedge}
    Set $w_{-1} := v_0$. Then
        \[
        v_{\frac{(k-1)^i-(k-1)}{2}}w_{\frac{(k-1)^i-(k-1)}{2}+k-2} \in E(G_i),
        \]
    whenever $i\geq1$ with $\frac{(k-1)^i-(k-1)}{2} \leq \ell$ and $\frac{(k-1)^i-(k-1)}{2}+k-2 \leq \ell'$, and
        \[
        v_{\frac{(k-1)^i-(k-1)}{2}+k-1}w_{\frac{(k-1)^i-(k-1)}{2}-1} \in E(G_i)
        \]
    whenever $i\geq 1$ with  $\frac{(k-1)^i-(k-1)}{2}+(k-1) \leq \ell$ and $\frac{(k-1)^i-(k-1)}{2}-1 \leq \ell'$.
    \end{clm}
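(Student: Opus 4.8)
The plan is to prove both displayed edge memberships \emph{simultaneously} by induction on $i\geq 1$. Set $m_i:=\tfrac{(k-1)^i-(k-1)}{2}$; since $k$ is even this lies in $\NN_0$ and $\tfrac{k-2}{2}\in\NN$, and one has $m_1=0$ together with the recursion $m_{i+1}=m_i+\tfrac{k-2}{2}(k-1)^i$. In this notation the two assertions read $v_{m_i}w_{m_i+k-2}\in E(G_i)$ and $v_{m_i+k-1}w_{m_i-1}\in E(G_i)$, under the stated index bounds. For the base case $i=1$ we have $m_1=0$ and the claimed edges are $v_0w_{k-2}$ and $v_{k-1}w_{-1}=v_{k-1}v_0$; these lie in $E(G_1)$ because $v_0w_0\ldots w_{k-2}$ and $v_0v_1\ldots v_{k-1}$ are paths of length $k-1$ in $G=G_0$, whose existence is exactly what the index hypotheses at $i=1$ guarantee (they are subpaths of $v_\ell\ldots v_0w_0\ldots w_{\ell'}$).

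For the inductive step, assume the statement at time $i$. For the first assertion I would write down the length-$(k-1)$ sequence in $G_i$
\[
v_{m_{i+1}},\ v_{m_{i+1}-(k-1)^i},\ \ldots,\ v_{m_i},\ w_{m_i+k-2},\ w_{m_i+k-2+(k-1)^i},\ \ldots,\ w_{m_{i+1}+k-2},
\]
consisting of $\tfrac{k-2}{2}$ edges descending the $v$-path in steps of $(k-1)^i$, then the middle edge $v_{m_i}w_{m_i+k-2}$ supplied by the induction hypothesis, then $\tfrac{k-2}{2}$ edges ascending the $w$-path in steps of $(k-1)^i$; the recursion for $m_i$ makes the endpoints and the edge count $\tfrac{k-2}{2}+1+\tfrac{k-2}{2}=k-1$ come out exactly, so this witnesses $v_{m_{i+1}}w_{m_{i+1}+k-2}\in E(G_{i+1})$. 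Each step $\{v_a,v_{a-(k-1)^i}\}$ and $\{w_b,w_{b+(k-1)^i}\}$ is an edge of $G_i$ by Observation \ref{obs:hom} applied to the injective homomorphisms $P_{\ell+1}\to G$, $j\mapsto v_j$ and $P_{\ell'+1}\to G$, $j\mapsto w_j$, together with $(k-1)^i\in D_i$ for the relevant path (which follows from $D_0=\{1\}$ by iterating Lemma \ref{lem:gen_iteration}, the relevant range bound $(k-1)^i\le\ell$ and $(k-1)^i\le\ell'$ being a consequence of the hypotheses $m_{i+1}\le\ell$, $m_{i+1}+k-2\le\ell'$); those same hypotheses keep every index used in range, and since $m_i<m_{i+1}$ they also imply the weaker bounds needed to invoke the induction hypothesis. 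The second assertion comes from the mirror construction running from $v_{m_{i+1}+k-1}$ down to $v_{m_i+k-1}$, across the induction-hypothesis edge $v_{m_i+k-1}w_{m_i-1}$, and then up from $w_{m_i-1}$ to $w_{m_{i+1}-1}$; the only special feature is that when $i=1$ the index $m_i-1=-1$ occurs, but $w_{-1}=v_0$ and the first edge $w_{-1}w_{k-2}=v_0w_{k-2}$ already lies in $E(G_1)$ by the base case.

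The step requiring the most care — and what I expect to be the real obstacle — is verifying that each of these sequences is a genuine \emph{path}, i.e.\ has no repeated vertex. The listed $v$-vertices are pairwise distinct (distinct indices, and $v_\ell\ldots v_0$ is a path), and likewise the $w$-vertices; the issue is whether some $v_j$ in the sequence equals some $w_{j'}$ in it. For this I would show the two index sets are disjoint: in the first assertion they are arithmetic progressions of common difference $(k-1)^i$ shifted relative to each other by exactly $k-2$, and $0<k-2<(k-1)^i$, so they cannot meet; in the second the shift is $k$, which is again not a multiple of $(k-1)^i$. Disjointness of the index sets together with Remark \ref{rem:no intersect} then yields disjointness of the corresponding vertex sets, so the sequence is a path. (The vertex $v_0=w_{-1}$ appearing when $i=1$ is handled separately, using $v_0\neq w_{j'}$ for every $j'\geq 0$.) Closing the induction establishes Claim \ref{clm:even_midedge} for all admissible $i$.
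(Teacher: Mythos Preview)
Your proof is correct and follows essentially the same strategy as the paper's: induct on $i$, and for the inductive step build a path of length $k-1$ in $G_i$ by taking $(k-2)/2$ ``long steps'' of size $(k-1)^i$ along the $v$'s, the middle bridge edge from the induction hypothesis, and $(k-2)/2$ long steps along the $w$'s. The only cosmetic differences are that the paper justifies the long-step edges directly via Lemma~\ref{lem:cycles_distance} (rather than Observation~\ref{obs:hom} plus Lemma~\ref{lem:gen_iteration}) and establishes the disjointness of the two index sets by reducing modulo $k-1$ (rather than your direct arithmetic-progression argument); both routes are equally valid.
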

        
    \begin{proof}
    The size constraints are only necessary to guarantee that the vertices occurring in the statement actually exist.
    We induct on $i\geq 1$. When $i=1$ the claim reads $v_0w_{k-2},v_{k-1}w_{-1} \in E(G_1)$, which holds since $v_0$ and $w_{k-2}$, and similarly $v_{k-1}$ and $v_0$, are  
    endpoints of paths of length $k-1$ in $G$.
    Suppose that $i\geq 2$ and the above size constraints are satisfied.
    Set
        \[
        j_s := \frac{(k-1)^{i-1}-(k-1)}{2} + s\cdot (k-1)^{i-1}    \quad,\quad 0\leq s \leq (k-2)/2.
        \]
    The induction hypothesis gives 
            \[
            v_{j_0}w_{j_0+k-2},v_{j_0+k-1}w_{j_0-1}  \in E(G_{i-1})
            \]
    and Lemma \ref{lem:cycles_distance} assures that any two vertices of distance $(k-1)^{i-1}$ in $G$ are adjacent at time $i-1$.
    We have 
        \[
        j_s+k-2 \equiv k-2 \not\equiv 0 \equiv j_{s'} \mod k-1
        \]
    for any $0\leq s,s'\leq (k-2)/2$  and hence $w_{j_s+k-2} \neq v_{j_{s'}}$.
    Similarly, $w_{j_s-1} \neq v_{j_{s'}+k-1}$.
    Therefore 
        \[
        v_{j_{(k-2)/2}}\ldots v_{j_1}v_{j_0}w_{j_0+k-2}w_{j_1+k-2}\ldots w_{j_{(k-2)/2}+k-2}
        \]
    and 
        \[
        v_{j_{(k-2)/2}+k-1}\ldots v_{j_1+k-1}v_{j_0+k-1}w_{j_0-1}w_{j_1-1}\ldots w_{j_{(k-2)/2}-1}
        \]
    are paths of length $k-1$ in $G_{i-1}$.
    The claim now follows from the observation that
        \[
        j_{(k-2)/2} = \frac{(k-1)^i-(k-1)}{2}.
        \]
    \end{proof}

    \begin{clm}\label{clm:even_longwalk}
    Suppose that $\ell+\ell' > (k-1)^r-(k-1)\cdot F'(k-2,k) - 3(k-1)^2$. Then $xy\in E(G_r)$.
    \end{clm}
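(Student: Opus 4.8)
The plan is to produce, for the given distinct $x,y$ with a shortest odd $xy$‑walk of length at least $3(k-1)^2$, an explicit path of length $k-1$ from $v_\ell$ to $w_{\ell'}$ inside $G_{r-1}$; since $\{v_\ell,w_{\ell'}\}=\{x,y\}$ and $v_\ell\neq w_{\ell'}$, this path completes a copy of $C_k$ at time $r-1$ and so $xy\in E(G_r)$. Together with Claim~\ref{clm:even_midwalk} (which handles $\ell+\ell'$ small) and the short‑odd‑walk case treated just before it, this finishes part~\ref{upperpart3} of Theorem~\ref{thm:strategy_upperbound}.

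First I would pin down $\ell$ and $\ell'$. From the hypothesis $\ell+\ell'>(k-1)^r-(k-1)F'(k-2,k)-3(k-1)^2$ together with $\ell\le n-2$, $\ell'\le n-3$ and \eqref{eq:rF'}, one obtains that both $\ell$ and $\ell'$ lie in a band of width $O(k^3)$ below $\tfrac12((k-1)^r-(k-1))$; in particular, for $n$ (hence $r$) large, $\min\{\ell,\ell'\}>(k-1)^{r-1}$, so every vertex $v_j,w_j$ with $j\le c+k-1$ exists, where $c:=\tfrac12((k-1)^{r-1}-(k-1))$. Applying Claim~\ref{clm:even_midedge} with $i=r-1$ then yields both ``cross edges'' $v_cw_{c+k-2}\in E(G_{r-1})$ and $v_{c+k-1}w_{c-1}\in E(G_{r-1})$; let $v_aw_b$ be whichever of the two satisfies $\ell-a\equiv\tfrac{k-2}2\pmod 2$, which is possible since $c$ and $c+k-1$ have opposite parity. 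As $a+b=(k-1)^{r-1}-1$ and $\ell\equiv\ell'\pmod 2$, one then automatically gets $\ell'-b\equiv\tfrac{k-2}2\pmod 2$ as well.

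Next I would build the two halves of the path out of straight runs along the sub‑paths $v_0\dots v_\ell$ and $w_0\dots w_{\ell'}$ of $G$. Exactly as in the proof of Proposition~\ref{prop:even_path}, Lemmas~\ref{lem:gen_firstkodd}, \ref{lem:gen_iteration}, \ref{lem:gen_intervalfromright} and \ref{lem:gen_revers} show that for a path on at least $3(k-1)$ vertices the set $D_{r-1}$ contains $(k-1)^{r-1}$ itself and every odd integer up to $(k-1)^{r-1}-F'(k-2,k)-2$; by Observation~\ref{obs:hom} each such difference is realised as an edge of $G_{r-1}$ along $v_0\dots v_\ell$ (resp.\ $w_0\dots w_{\ell'}$). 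A short computation from $\ell\le n-2$, \eqref{eq:rF'} and the parity choice above gives $\ell-a\le\tfrac{k-2}2(k-1)^{r-1}-F'(k-2,k)-2$, so one can write $\ell-a=d_1+\dots+d_{(k-2)/2}$ with $d_1=\dots=d_{(k-4)/2}=(k-1)^{r-1}$ and $d_{(k-2)/2}$ an odd number in $[1,(k-1)^{r-1}-F'(k-2,k)-2]$; the partial sums produce a path $v_\ell=v_{a_0},v_{a_1},\dots,v_{a_{(k-2)/2}}=v_a$ of length $\tfrac{k-2}2$ in $G_{r-1}$, all of whose vertices carry indices in $[a,\ell]$. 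Symmetrically (using $\ell'-b\le\tfrac{k-2}2(k-1)^{r-1}-F'(k-2,k)-2$) one gets a path $w_b=w_{b_0},\dots,w_{b_{(k-2)/2}}=w_{\ell'}$ of length $\tfrac{k-2}2$ in $G_{r-1}$ with all indices in $[b,\ell']$. Concatenating these two paths with the cross edge $v_aw_b$ yields a walk of length $\tfrac{k-2}2+1+\tfrac{k-2}2=k-1$ from $v_\ell$ to $w_{\ell'}$ in $G_{r-1}$; by Remark~\ref{rem:no intersect} any repeated vertex would force a coincidence of a $v$‑index with a $w$‑index, and the finitely many such coincidences are avoided by slightly perturbing the decompositions (replacing a copy of $(k-1)^{r-1}$ by a nearby element of $D_{r-1}$). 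Hence the walk is a path and $xy=v_\ell w_{\ell'}\in E(G_r)$.

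The main difficulty is that the length budget is exactly tight: each half‑walk has length essentially $\tfrac{k-2}2(k-1)^{r-1}$, while an edge of $G_{r-1}$ lying on a path can span a difference of at most $(k-1)^{r-1}$, so $\tfrac{k-2}2$ edges per side is the only viable split. Guaranteeing that the decomposition $\ell-a=\sum d_j$ (and its $w$‑analogue) actually exists therefore forces one to use the narrow band into which the hypothesis confines $\ell+\ell'$, combined with a parity argument and with the fact that the ``Frobenius gap'' still leaves $(k-1)^{r-1}$ and all small odd values in $D_{r-1}$. A secondary, essentially bookkeeping, obstacle is ruling out coincidences among the vertices of the two half‑paths and the cross edge, which is exactly where Remark~\ref{rem:no intersect} and the freedom in choosing the $d_j$ enter.
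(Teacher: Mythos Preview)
Your approach is essentially the paper's: obtain a cross edge $v_aw_b$ at time $r-1$ from Claim~\ref{clm:even_midedge}, then extend on each side by $\tfrac{k-2}{2}$ jumps along the respective sub-path to reach $v_\ell$ and $w_{\ell'}$, using that $(k-1)^{r-1}$ and the relevant odd remainder lie in $D_{r-1}$ (via Lemmas~\ref{lem:gen_intervalfromright} and~\ref{lem:gen_revers} applied to the paths $v_0\ldots v_\ell$ and $w_0\ldots w_{\ell'}$). The length and parity bookkeeping you outline is correct and matches the paper.

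The one point where your argument is weaker is disjointness. The paper places the single ``remainder'' step at the \emph{far} end of each half-path: it takes $v_{j_0},v_{j_1},\dots,v_{j_t},v_\ell$ and $w_{j'_0},w_{j'_1},\dots,w_{j'_t},w_{\ell'}$ with $j_s=j_0+s(k-1)^{r-1}$, $j'_s=j'_0+s(k-1)^{r-1}$ and $t=\tfrac{k-4}{2}$. Because $(k-1)^{r-1}\equiv 0\pmod{k-1}$ while $j_0-j'_0\in\{-(k-2),k\}\equiv 1\pmod{k-1}$, every $j_s$ is incongruent to every $j'_{s'}$ modulo $k-1$, so by Remark~\ref{rem:no intersect} the two half-paths are automatically vertex-disjoint (with $v_\ell,w_{\ell'}$ handled separately via $\ell,\ell'>j_t,j'_t$). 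Your version puts the remainder near $v_a$ on one side and near $w_{\ell'}$ on the other, which destroys this residue argument; your suggestion to ``perturb a $d_j$ to a nearby element of $D_{r-1}$'' is plausible but not actually carried out---you would have to check that the perturbed differences still lie in $D_{r-1}$, still sum to $\ell-a$ (resp.\ $\ell'-b$), and do not create new coincidences. Simply mirroring the paper's placement (remainder always adjacent to $v_\ell$ and $w_{\ell'}$) gives a one-line disjointness proof and removes the hand-wave.
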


    \begin{proof}
    Recall that $\ell \equiv \ell' \mod 2$ since $v_\ell\ldots v_0w_0\ldots w_{\ell'}$ is an odd walk.
    Our plan is to find an $xy$-path of length $k-1$ in $G_{r-1}$. By  the conditions on $\ell,\ell'$ in Claim \ref{clm:even_shortestwalk} and   the upper bound in \eqref{eq:rF'},  we have
        \begin{linenomath}     \begin{align}
        \ell \geq \ell' 
        &\geq \ell' + \ell - (n-2) \nonumber\\
        &> (k-1)^r - (k-1)\cdot F'(k-2,k) - 3(k-1)^2 - \frac{(k-1)^r-(k-1)}{2} + F'(k-2,k) \nonumber\\
        &= \frac{(k-1)^{r}}{2} - (k-2)\cdot F'(k-2,k) - 3(k-1)^2 + \frac{k-1}2 
    \label{eq:even_lowerboundonell}
         \end{align} \end{linenomath}
    Now choose
        \[
        j_0 \in \left\lbrace \frac{(k-1)^{r-1}-(k-1)}{2}, \frac{(k-1)^{r-1}-(k-1)}{2}+k-1 \right\rbrace
        \]
    and
        \[
        j'_0 \in \left\lbrace \frac{(k-1)^{r-1}-(k-1)}{2}-1, \frac{(k-1)^{r-1}-(k-1)}{2}+k-2 \right\rbrace
        \]
    such that $\ell-j_0 \equiv \ell'-j'_0 \equiv (k-2)/2 \mod 2$.
    The congruences $\ell-j_0 \equiv \ell'-j'_0 \mod 2$ and $\ell \equiv \ell' \mod 2$ together imply $j_0 \equiv j'_0 \mod 2$.
    Thus $v_{j_0}w_{j'_0}$ is one of the edges whose presence at time $r-1$ is guaranteed by Claim \ref{clm:even_midedge} with $i=r-1$. We note here that $v_{j_0}, w_{j_0'}$ indeed exist as $\ell, \ell'\geq j_0,j_0'$ due to \eqref{eq:even_lowerboundonell} and the fact that $r$ is sufficiently large.

    We will now construct a  $v_{j_0}v_\ell$-path $Q\subseteq G_{r-1}$ and a $w_{j'_0}w_{\ell'}$-path $P \subseteq G_{r-1}$ , both of length  $(k-2)/2$  and such that $V(P)\cap V(Q)=\emptyset$. Then the union of these paths along with the edge $w_{j'_0}v_{j_0}$ gives an $xy$-path of length $k-1$ in $G_{r-1}$, and hence $xy\in E(G_r)$ as required. To this end we define 
        \[
        j'_s := j'_0 + s\cdot (k-1)^{r-1} \qquad \mbox{ and } \qquad j_s := j_0 + s\cdot (k-1)^{r-1},
        \]
    for $1\leq s\leq t:= \frac{k-2}2-1=\frac{k-4}{2}$, and claim that $Q:=v_{j_0}v_{j_1}\ldots v_{j_t}v_{\ell}$ and  $P:= w_{j'_0}w_{j'_1}\ldots w_{j'_{t}}w_{\ell'}$ are the required paths.

    First let us check that the vertices used in the path actually exist. That is, we need to check that $\ell>j_t$ and $\ell'>j'_t$. This follows because 
        \begin{linenomath} \begin{align}
        \nonumber \ell'-j_t &=\ell'-j_0-t(k-1)^{r-1} \\ 
        \nonumber    & \geq \ell'-\left(\frac{(k-1)^{r-1}-(k-1)}{2}+k-1\right)-\left(\frac{k-4}{2}\right)(k-1)^{r-1} \\ \nonumber
        &= \ell'-\left(\frac{k-3}{2}\right)(k-1)^{r-1}-\frac{k-1}{2}
        \\ \label{eq:ell-jt lower} &>(k-1)^{r-1}- (k-2)\cdot F'(k-2,k) - 3(k-1)^2>k,
        \end{align} \end{linenomath}
    where we used \eqref{eq:even_lowerboundonell} in the second last inequality and the fact that $r$ is sufficiently large in the final inequality. This shows that $\ell>j_t$ and $\ell'>j'_t$ as $\ell\geq \ell'$ and $j'_t\leq j_t+(k-2)$. 

    Next we show that the paths $P,Q$  indeed exist in $G_{r-1}$. The existence of the edges $v_{j_{s-1}}v_{j_{s}}\in E(G_{r-1})$ for $s\in[t]$ is guaranteed by Lemma~\ref{lem:cycles_distance} and likewise for the edges $w_{j'_{s-1}}w_{j'_{s}}$ with $s\in [t]$. It remains to establish that  $v_{j_t}v_\ell,w_{j'_t}w_{\ell'}\in E(G_r)$.   For this we note that 
        \begin{equation} \label{eq:ell parity}
        \ell-j_t\equiv (\ell-j_0)-(j_t-j_0)
        \equiv \left(\frac{k-2}{2}\right) - \left(\frac{k-4}{2}\right)\equiv 1 \mod 2,
        \end{equation}
    from our choice of $j_0$. Similarly, we have that $\ell'-j'_t\equiv 1 \mod 2$. Moreover, $\ell,\ell'+1\leq n-2$ (Claim \ref{clm:even_shortestwalk}) and $j_t, j'_t+1\geq \frac{k-3}{2}(k-1)^{r-1}-\frac{k-1}{2}$. Therefore, appealing to the upper bound of \eqref{eq:rF'}, we get 
        \[
        \ell-j_t,\ell'-j'_{t}< \frac{(k-1)^r-(k-1)}{2}-F'(k-2,k)-\left(\frac{k-3}{2}(k-1)^{r-1}-\frac{k-1}{2}\right)=(k-1)^{r-1}-F'(k-2,k).
        \]
    Hence Proposition \ref{prop:even_path} gives that both $v_{j_t}v_\ell$ and $w_{j'_t}w_{\ell'}$ are present in $G_{r-1}$. 

    Finally then, we need to establish that $P$ and $Q$ are disjoint.  Recall that we chose $j_0, j'_0$ such that $j_0 \equiv j'_0 \mod 2$ and hence we obtain either $j_0 = j'_0 - (k-2)$ or $j_0 = j'_0 + k$.
    Therefore
        \[
        j_s \equiv j_0 \not\equiv j'_0 \equiv j'_{s'} \mod k-1
        \]
    for $0 \leq s,s' \leq t$  and so $Q\setminus \{v_\ell\}$ and $P\setminus \{w_{\ell'}\}$ do not intersect by Remark \ref{rem:no intersect}. Moreover $v_\ell\neq w_{\ell'}$ as $\{v_\ell,w_{\ell'}\}=\{x,y\}$ and $\ell\geq \ell'>j_t$ using \eqref{eq:ell-jt lower} gives that $v_\ell\notin V(P)$ and $w_{\ell'}\notin V(Q)$. This shows that $P\cup Q\cup\{w_{j'_0}v_{j_0}\}$ is indeed a path of length $k-1$ in $G_{r-1}$ and $xy
    \in E(G_r)$ as required.
    \end{proof}

\section{Multiple cycles} \label{sec:multiple}
In this section, we prove Theorem~\ref{thm:multiple_cycles}. 
 
\subsection{Lower bound}
The lower bound of the first part is obtained by the starting graph which is disjoint union of cycles of lengths $k_2,\ldots,k_s$ and a path of length $n-(k_2+\ldots+k_s)$.
That is, let $G:= C_{k_2} \sqcup \ldots \sqcup C_{k_s} \sqcup P_{n-(k_2+\ldots+k_s)}$ with $H$-process $(G_i)_{i\geq 0}$, and let $(P^i)_{i\geq 0}$ be the $C_{k_1}$-process on $P_{n-(k_2+\ldots+k_s)}\subseteq G$.
Every copy of $C_{k_1}$ in $P^i$ can be extended to a copy of $H$ by the cycles of $G$.
Therefore $P^i \subseteq G_i$ for every $i\geq 0$, so any two vertices of odd distance on $P^0$ will eventually be adjacent in the $H$-process on $G$ by Theorem \ref{thm:strategy_upperbound} and Observation \ref{obs:hom}.
The following is an analogue of the first part of Lemma \ref{lem:cycles_distance} for multiple cycles:

    \begin{clm} \label{clm:mult lower}
    For any $x,y \in V(P^0)$ and $i\geq 0$, the distance $\dist_{G_i}(x,y)$ satisfies
        \[
       \dist_{G_0}(x,y)  \leq (k_1-1)^i \dist_{G_i}(x,y).
        \]
    \end{clm}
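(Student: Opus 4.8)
\emph{Proof proposal.} The plan is to adapt the proof of the first inequality of Lemma~\ref{lem:cycles_distance}, the only new ingredient being a bound on the length of a path in $G_{i-1}$ joining the endpoints of an edge freshly added at step $i$. Suppose $e=uv\in E(G_i)\setminus E(G_{i-1})$. By the definition of the $H$-process, $n_H(G_{i-1}+e)>n_H(G_{i-1})$, so there is a copy of $H$ in $G_{i-1}+e$ that uses $e$. Since $H=C_{k_1}\sqcup\ldots\sqcup C_{k_s}$, the edge $e$ lies in exactly one of the connected components of this copy, namely a cycle of some length $k_j$ with $j\in[s]$. Deleting $e$ from that cycle leaves a $u$--$v$ path of length $k_j-1$ all of whose edges are distinct from $e$, hence a $u$--$v$ path of length $k_j-1$ in $G_{i-1}$. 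As $k_j\leq k_1$, we conclude $\dist_{G_{i-1}}(u,v)\leq k_1-1$.

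Now fix $x,y\in V(P^0)$. Since $P^0=P_{n-(k_2+\ldots+k_s)}$ is connected and $G_0\subseteq G_i$, every distance appearing below is finite. Let $i\geq 1$ and take a shortest $x$--$y$ path in $G_i$, of length $d=\dist_{G_i}(x,y)$. For each edge $ab$ of this path we have $\dist_{G_{i-1}}(a,b)\leq k_1-1$: this is clear if $ab\in E(G_{i-1})$ (the distance is $1\leq k_1-1$ as $k_1\geq 3$), and follows from the previous paragraph if $ab\notin E(G_{i-1})$. Concatenating shortest $G_{i-1}$-paths between consecutive vertices of the chosen path produces an $x$--$y$ walk in $G_{i-1}$ of length at most $(k_1-1)d$, so $\dist_{G_{i-1}}(x,y)\leq (k_1-1)\dist_{G_i}(x,y)$. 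Applying this inequality $i$ times yields $\dist_{G_0}(x,y)\leq (k_1-1)^i\dist_{G_i}(x,y)$, as claimed.

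I do not expect a genuine obstacle here: the only point of care is that the newly completed copy of $H$ may contain the added edge in one of the shorter cycles $C_{k_j}$ rather than in $C_{k_1}$, but since $k_j\leq k_1$ this only shortens the connecting path, so the bound $k_1-1$ still holds. (This is also why, in contrast to the single-cycle case, there is no matching additive correction in the reverse direction, which is reflected in the weaker error term of Theorem~\ref{thm:multiple_cycles}.)
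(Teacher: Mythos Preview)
Your proof is correct and follows essentially the same approach as the paper: take a shortest $xy$-path in $G_i$, replace each edge not present in $G_{i-1}$ by a path of length $k_j-1\leq k_1-1$ coming from the cycle component of the completed copy of $H$, obtain an $xy$-walk of length at most $(k_1-1)\dist_{G_i}(x,y)$ in $G_{i-1}$, and iterate. Your added remarks on finiteness and on edges already in $E(G_{i-1})$ are harmless elaborations of the same argument.
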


    \begin{proof}
    Let $Q$ be a shortest $xy$-path in $G_i$.
    Any edge $uv$ of $Q$ that is not present at time $i-1$ yields a $uv$-path $Q_{uv}$ of length $k_j-1$ in $G_{i-1}$ for some $j\in [s]$.
    We can build an $xy$-walk in $G_{i-1}$ by replacing every $uv \in E(Q)\cap E(G_i) \setminus E(G_{i-1})$ by $Q_{uv}$.
    That walk has length at most $(k_1-1)\cdot \dist_{G_i}(x,y)$ because $k_1 \geq k_j$ for $j\in [s]$.
    Therefore,
        \[
        \dist_{G_{i-1}}(x,y) \leq (k_1-1) \dist_{G_i}(x,y)
        \]
    and  iterating gives the desired claim. 
    \end{proof}
    
Let $x$ be an endpoint of $P^0$.
Let $y$ be the other endpoint if the length of $P^0$ is odd and the unique $P^0$-neighbour of the other endpoint otherwise.
With these choices $\dist_{P^0}(x,y)$ is odd and at least $n-(k_2+\ldots+k_s)-2$.
Recall that $\dist_{P^0}(x,y) = \dist_{G_0}(x,y)$.
    
Thus, for $i_0 := \ceil{\log_{k_1-1}(n-(k_2+\ldots+k_s)-2)}-1$, appealing to Claim \ref{clm:mult lower} gives
	\begin{linenomath}     \begin{align*}
	\dist_{G_i}(x,y) 
	\geq \frac{\dist_{G_0}(x,y)}{(k_1-1)^{i_0}} 
	\geq \frac{n-(k_2+\ldots+k_s)-2}{(k_1-1)^{i_0}}
	> \frac{n-(k_2+\ldots+k_s)-2}{n-(k_2+\ldots+k_s)-2}
	&= 1,
	\end{align*} \end{linenomath}
which implies that $x$ and $y$ cannot be adjacent at time $i$, hence $\tau_H(G) \geq i_0+1$.
The lower bound now follows from the simple estimate $\log_{k_1-1}(n) \leq \ceil{\log_{k_1-1}(n-(k_2+\ldots+k_s)-2)}+1$, which holds when $n$ is sufficiently large.
	
\subsection{Upper bound }
To obtain the upper bound suppose that $G$ is an arbitrary $n$-vertex graph with $\tau_H(G) = M_H(n)$ and $H$-process $(G_i)_{i\geq 0}$. We aim to show that 
\begin{equation} \label{eq:T def}\tau_H(G)=M_H(n)\leq T:=\log_{k_1-1}(n)+6s^6k_1^2.\end{equation}
At time $1$ there exists disjoint copies of $C_{k_1},\ldots, C_{k_s}$.
Fix any such copies and denote them by $C'_1,\ldots,C'_s$ where $C'_j$ has length $k_j$ for $j\in[s]$. Note that, as in the proof of Observation \ref{obs:connectivity}, the vertex sets of components in $G$ are fixed throughout the process and at no point in the process will an edge between two different connected components be added. We can therefore run our analysis on edges appearing only within connected components of $G$. 

    \begin{clm} \label{clm:sep comp}
    If $Z\subseteq V(G_1)$ is the vertex set of a component of $G_1$ that does not contain any of the $C'_j$, $j\in[s]$, then $G_i[Z] = G_{i+1}[Z]$ for $i\geq  M_{C_{k_1}}(n)+5k_1^2$.
    \end{clm}

    \begin{proof}
    For each $j\in[s]$, every copy $P'$ of $P_{k_j}$ at time $i$ with vertices in $Z$ can be extended to $C'_1\cup\ldots\cup C'_{k_j-1}\cup P'\cup C'_{k_j+1}\cup\ldots\cup C'_s$ so the endpoints of $P'$ are adjacent at time $i+1$. This implies that if no edge is added inside $Z$ at time $i$ then $\langle G\rangle_H[Z]=G_i[Z]$.
    If $G_1[Z]$ is already $C_{k_j}$-stable for every $j\in [s]$ or has at most $k_1$ vertices, it will be $C_{k_j}$-stable for every $j$ at time $k_1^2+1$. 
    Otherwise we have $|Z| \geq k_1+1$ and one can find a $k_j$-cycle in $G_2[Z]$ for some $j\in [s]$.
        
    Consider the case that $G_2[Z]$ has diameter at most $k_1-2$.
    Since $k_s = \min_{j\in [s]} k_j$ there must be a $k_s$-cycle $C$ in $G_3[Z]$.
    Pick a vertex $z_0\in V(C)$ and define
         \begin{linenomath} \begin{align*}
        X &:= \{ z\in Z : G_3[Z] \text{ has an even path from } z \text{ to } z_0 \}, \\
        Y &:= \{ z\in Z : G_3[Z] \text{ has an odd path from } z \text{ to } z_0 \}.
        \end{align*} \end{linenomath}
    As $G_3[Z]$ is connected, $Z = X \cup Y$ with $X\cap Y = \varnothing$ when $G_3[Z]$ is bipartite and $X=Y=Z$ otherwise.
    Let $x\in X$, $y\in Y$ be distinct vertices and choose $Z'\subseteq Z$ of size at least $k_1+1$ and at most $3k_1$ such that $x,y\in Z'$, $G_3[Z']$ is connected, and $C \subseteq G_3[Z']$.
    Let $i_0 := 3 + M_{C_{k_s}}(3k_1)$.
    By Lemma \ref{lem:cycles_smalldistance} with respect to the $C_{k_s}$-process on $G_3[Z']$, $xy \in E(G_{i_0}[Z'])$.
    Since $x$ and $y$ were chosen arbitrarily, $G_{i_0}[Z]$ is complete or contains a complete bipartite graph with partite sets $X$ and $Y$.
    If $G_{i_0}[Z]$ is a complete bipartite graph it will be $C_{k_j}$-stable for all $j$ after two more steps\footnote{Any permutation of the vertices inside one of the partite sets defines an automorphism of the complete bipartite graph, so the only way a bootstrap process can add new edges is by turning one of the partite sets into a clique. This can happen at most two times.}.
    If not, Lemma \ref{lem:cycles_completebipartitetoclique} tells us that $Z$ will be a clique at time $i_0+2 < 5k_1^2 $.
    
    Finally, we suppose that $G_2[Z]$ contains a copy of $P_{k_1}$.
    Let $i_1 := 2 + M_{C_{k_1}}(n)$.
    Lemma \ref{lem:cycles_smalldistance} applied to the $C_{k_1}$-process on $G_2[Z]$ guarantees that $G_{i_1}[Z]$ is either complete or contains a complete bipartite graph with at least $\floor{k_1/2}$ vertices in either part.
    Again, after two more steps we arrive at a graph that is $C_{k_j}$-stable for all $j\in[s]$.
    \end{proof}
    
Note that $M_{C_{k_1}}(n)+5k_1^2\leq T$ by Theorem \ref{thm:cycles} and so Claim \ref{clm:sep comp} implies that no edge will be added after time $T$ in a component that does not contain a $C'_j$. 

\vspace{2mm}

It remains to analyse components containing the cycles $C_j'$. Let  $V' := V(C'_1)\cup\ldots\cup V(C'_s)$ and for $j\in[s]$, let $U'_j$ be the set of vertices in $V(G)\setminus V'$ for which there exists a path to $C'_j$ in $G_1$ that does not involve any vertices from $C'_\ell$ for each $\ell\neq j $.   Let $V\subseteq V(G)$ be all vertices contained in some component of $G$ that contains one of the $C'_j$ and note that $V=V'\cup(\cup_{j}U'_j)$. Note also  that any $k_j$-cycle in $U'_j\cup V(C'_j)$  can be extended to a copy of $H$ in $G_i$ for $i\geq 1$. Likewise any $k_1$-cycle in $U'_j$ can be extended to a copy of $H$. In the remainder of the proof, for $i\geq 0$ and a vertex subset $U\subseteq V(G)$ we define 
\[N_{i}(U) := \{ v \in V(G)\setminus (U\cup V') : N_{G_i}(v) \cap U \neq \emptyset\} \] that is, the neighbourhood of $U$ in $G_i$ that is \textit{disjoint} from $U$ itself as well as from $V'$. In the case that $U=V(C'_j)$ for some $j\in[s]$, we simply write $N_i(C'_j)$.

\begin{clm} \label{clm:nbrU} For all $j\in [s]$ we have $U'_j \subseteq N_{\tau_0}(C'_j)$ for $\tau_0:=M_{C_{k_1}}(n)+3k_1^2\leq \log_{k_1-1}(n)+4k_1^2$.
\end{clm}
\begin{proof}
 If $u\in U_j'$ then there is a path $P_u$ in $G_1$ from $u$ to $C_j'$ avoiding the  $C'_{\ell}$ with $\ell\neq j$.  If $P_u$ has length at least $k_1+1$ (and hence $k_1+1$ vertices disjoint from $C_j'$),  Theorem \ref{thm:strategy_upperbound} applied with $k=k_1$ to $P_u-v$, where $v$ is the endpoint of $P_u$ on $C'_j$, gives that $u$ has distance at most 3 from $C_j'$ in $G_{\tau_0'}$ with $\tau_0':=M_{C_{k_1}}(n)+1$. Therefore at time $\tau_0'$ we can assume that every vertex in $U_j'$ is of distance at most $k_1+1$ from $C_j'$. 
By Lemma \ref{lem:cycles_smalldistance} with $k=k_j$ on $G_{\tau'_0}[U'_j\cup V(C'_j)]$, we have that indeed $U_j'\subseteq N_{\tau_0}(C_j')$, using that $\tau_0'+M_{C_{k_j}}(k_j+k_1)\leq \tau_0'+(k_j+k_1)^2/2 \leq \tau_0$.
\end{proof}

 Claim \ref{clm:nbrU} shows that all vertices in $V\setminus V'$ are  contained  in $\cup_{j}N_{\tau_0}(C'_j)$.  Our next claim tracks how the sets $N_{i}(C'_j)$ evolve in the process. 
	
	\begin{clm}\label{clm:multiple_growingsets}
	For any $j\in[s]$ and $i\geq 1$ the following hold:
		\begin{enumerate}
		\item \label{item: mult 1} $N_{i}(N_{i}(C'_j)) \subseteq N_{i+1}(C'_j)$.
		\item \label{item: mult 2} If $N_{i}(C'_j)$ is non-empty, either $G_{i+k_j^2}[N_i(C'_j)\cup V(C'_j)]$ is complete or $k_j$ is even and $G_{i+k_j^2}[N_i(C'_j)\cup V(C'_j)]$ contains a spanning complete bipartite graph with partite sets of size at least $k_j/2$.
		\item \label{item: mult 3} If $\ell \in [s]\setminus\{ j \}$ and $|N_i(C'_j)\cap N_i(C'_\ell)|\geq 3$, then $N_i(C'_\ell) \subseteq N_{i+k_1^2+1}(C'_j)$.
		\end{enumerate}
	\end{clm}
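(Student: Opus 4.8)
The plan is to prove the three items more or less independently, all by tracking how copies of $C_{k_j}$ (resp.\ $C_{k_1}$) involving the fixed cycle $C'_j$ propagate through the process, exactly as in the single-cycle analysis but now ``anchored'' at $C'_j$. For item \ref{item: mult 1}, I would argue directly: if $v\notin V'$ has a $G_i$-neighbour $u$ in $U_{j,i}\cup V(C'_j)$, then I need to show $v$ acquires a neighbour on $C'_j$ by time $i+1$. If $u\in V(C'_j)$ this is immediate. If $u\in U_{j,i}$, then by definition $u$ has a neighbour $c\in V(C'_j)$ at time $i$; now walk along $C'_j$ from $c$ to build a path of length exactly $k_j-1$ ending at $c'\in V(C'_j)$ whose other endpoint is $v$ (using the edge $vu$, the edge $uc$, and $k_j-3$ edges along the cycle $C'_j$, which is possible since $k_j\geq 3$ and $C'_j$ has enough vertices). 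Completing this path to a $C_{k_j}$ forces $vc'\in E(G_{i+1})$, so $v\in U_{j,i+1}$ — being slightly careful that $v$ is genuinely outside $V'$ and that the path we build avoids $V'\setminus V(C'_j)$, which it does since it only uses $v$, $u$, and vertices of $C'_j$. (One should also check the edge completing the $k_j$-cycle is a genuine \emph{new} copy, i.e.\ that $v$ is not already adjacent to every vertex of $C'_j$, but if it were then $v\in U_{j,i}\subseteq U_{j,i+1}$ anyway.)

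For item \ref{item: mult 2}, the point is that $G_i[U_{j,i}\cup V(C'_j)]$ is connected (every vertex of $U_{j,i}$ has a neighbour on $C'_j$) and contains the $k_j$-cycle $C'_j$, so Lemma \ref{lem:cycles_smalldistance} applies to it: its final graph under the $C_{k_j}$-process is a clique if $k_j$ is odd or the graph is non-bipartite, and a complete bipartite graph (with parts of size at least $\lfloor k_j/2\rfloor\geq k_j/2$ in the even case, since $C'_j$ alone already contributes $k_j/2$ to each side) otherwise. Since every such copy of $C_{k_j}$ extends to a copy of $H$ using the disjoint cycles $C'_\ell$, $\ell\neq j$, the $C_{k_j}$-process on $G_i[U_{j,i}\cup V(C'_j)]$ is dominated by the $H$-process on $G$ restricted to this vertex set, as in the proof for components without any $C'_j$. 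The running time of that restricted $C_{k_j}$-process is at most $\binom{|U_{j,i}\cup V(C'_j)|}{2}$ trivially, but we want the sharper bound $k_j^2+2$; this is where I would want a bound of the form ``$M_{C_{k_j}}$ on a graph that is a complete bipartite graph plus some pendant structure stabilises in $O(k_j^2)$ steps'' — realistically one invokes Lemma \ref{lem:cycles_smalldistance} to reach the complete (bi)partite graph and then notes, as in the footnote, that at most two further steps turn parts into cliques, with the $k_j^2$ slack absorbing the bounded diameter. I would need to double-check that $k_j^2+2$ really suffices and adjust the constant if not.

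For item \ref{item: mult 3}, suppose $\ell\neq j$ and $|U_{j,i}\cap U_{\ell,i_2}|\geq 3$. Pick three vertices $a,b,c$ in this intersection. Each of them lies in $U_{\ell,i_2}\subseteq U_{\ell,i}$ (the sets $U_{\ell,\cdot}$ are non-decreasing once we have item \ref{item: mult 1} — or rather $U_{\ell,i_2}\subseteq U_{\ell,i}$ for $i\geq i_2$ follows by iterating \ref{item: mult 1}), so each has a neighbour on $C'_\ell$ at time $i$; and each lies in $U_{j,i}$, so each has a neighbour on $C'_j$ at time $i$. Having three such ``bridge'' vertices between the two anchored clusters, together with item \ref{item: mult 2} (so that after $\le k_j^2+2$ steps $U_{j,i}\cup V(C'_j)$ and similarly $U_{\ell,i_2}\cup V(C'_\ell)$ are essentially complete or complete bipartite), should let one route, for \emph{every} vertex $w\in U_{\ell,i_2}$, a path of length $k_1-1$ from $w$ to some vertex of $C'_j$ that uses two of the three bridge vertices and suitable stretches inside the (near-)complete clusters — three bridges give enough parity/length flexibility to hit exactly $k_1-1$ regardless of the bipartition classes involved. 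Completing this to a copy of $C_{k_1}$, which extends to a copy of $H$, puts $w$ into $U_{j,\cdot}$; bookkeeping the number of steps (the $k_j^2+2$ from cleaning up the clusters, a constant from Lemma \ref{lem:cycles_distance}-type contractions, and a $k_1^2$-type slack) gives the claimed $U_{\ell,i_2}\subseteq U_{j,i+k_1^2+3}$.

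\textbf{Main obstacle.} The hard part is item \ref{item: mult 3}: one must construct, uniformly over all $w\in U_{\ell,i_2}$, a path of length \emph{exactly} $k_1-1$ from $w$ to $C'_j$ living in the already-percolated clusters plus the three bridge vertices, and do so with explicit control on the time, all while keeping these paths within the correct components and avoiding the other anchor cycles $C'_m$, $m\notin\{j,\ell\}$. The parity juggling (needed because the clusters may be bipartite when $k_j$ or $k_1$ is even) is exactly the analogue of the odd-walk analysis in Section \ref{sec:cycles_upper_part_iii}, and having three bridge vertices rather than one is what makes it go through; getting the constant ($k_1^2+3$) right will require some care but no new idea beyond Lemmas \ref{lem:cycles_distance}, \ref{lem:cycles_smalldistance}, \ref{lem:cycles_completebipartitetoclique} and item \ref{item: mult 2}.
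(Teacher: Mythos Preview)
Your approach to item \ref{item: mult 1} is correct and matches the paper's.

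For item \ref{item: mult 2} you correctly identify that the naive bound on the $C_{k_j}$-running time of $G_i[U_{j,i}\cup V(C'_j)]$ is too large, but your proposed fix (``bounded diameter absorbed in the $k_j^2$ slack'') is not an argument: bounded diameter does not by itself bound the running time, and $|U_{j,i}|$ may be of order $n$, so the $C_{k_j}$-process on the whole cluster could genuinely take $\Theta(\log n)$ steps. The paper's trick is to work \emph{pairwise}: for each $U\subseteq U_{j,i}$ with $|U|\leq 2$, apply Lemma \ref{lem:cycles_smalldistance} to $G_i[U\cup V(C'_j)]$, which has at most $k_j+2$ vertices and therefore stabilises within $M_{C_{k_j}}(k_j+2)\leq\binom{k_j+2}{2}\leq k_j^2+2$ steps. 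Since this holds for every such $U$, the desired global (complete or spanning complete bipartite) structure on $U_{j,i}\cup V(C'_j)$ follows at time $i+k_j^2+2$.

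For item \ref{item: mult 3} your plan has a genuine gap. You want to build a path of length $k_1-1$ from $w\in U_{\ell,i_2}$ to some $c\in V(C'_j)$ and close it to a copy of $C_{k_1}$, asserting this ``extends to a copy of $H$''. But that $C_{k_1}$ contains $c\in V(C'_j)$, so $C'_j$ is no longer available as the $C_{k_j}$-component of $H$; unless $j=1$ you would need a fresh disjoint $C_{k_j}$, which you have no way to produce. Moreover, if $k_1$ is large compared to $k_j+k_\ell+|U_{j,i}\cup U_{\ell,i_2}|$ there may not even be enough vertices in the two clusters to host a $k_1$-cycle. The paper avoids both problems: it first disposes of the easy case where $x$ has a $G_{i+k_1^2+2}$-neighbour in $U_{j,i+k_1^2+2}$ via part \ref{item: mult 1}; otherwise part \ref{item: mult 2} (applied to $\ell$) forces $k_\ell$ to be even with a spanning complete bipartite structure on $U_{\ell,i}\cup V(C'_\ell)$, and then the three bridge vertices $y,y',y''$ are used to build simultaneously a $(k_j-1)$-path $x\,v_1\,y\,w_3\cdots w_{k_j-1}$ (with $v_1\in V(C'_\ell)$ and $w_t\in V(C'_j)$) and a vertex-disjoint $k_\ell$-cycle $v_3\cdots v_{k_\ell-1}\,y'\,w_r\,y''\,v_3$. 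Together with the untouched $C'_t$, $t\notin\{j,\ell\}$, these form a copy of $H$ minus the edge $xw_{k_j-1}$, yielding $x\in U_{j,i+k_1^2+3}$. The point is that the construction realises both the $C_{k_j}$- and the $C_{k_\ell}$-component of $H$ explicitly, so no anchor cycle needs to be reused.
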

	
	\begin{proof}
	Let $j\in [s]$ be fixed.
	
	\eqref{item: mult 1} Every $u\in N_i(C'_j)$ has a neighbour on $C'_j$, so by going around $C'_j$ we can pick $x_j(u) \in V(C'_j)$ such that $u$ and $x_j(u)$ are the endpoints of a path of length $k_j-2$ in $V(C'_j)\cup \{ u \}$.
	Therefore, if $uv\in E(G_i)$ for some $v \in V(G)\setminus (N_{i}(C'_j)\cup V')$, we have $x_j(u)v\in E(G_{i+1})$ and thus $v\in N_{i+1}(C'_j)$.
	Here it is important that $v\notin V'$ so we may extend the path of length $k_j-2$ to a copy of $H$ minus an edge.
	
	\eqref{item: mult 2}
	Recall that any $k_j$-cycle with vertices in $N_{i}(C'_j)\cup V(C'_j)$ can be extended to a copy of $H$ using $C'_1,\ldots,C'_s$.
	If $j$ is odd, then for any non-empty $U\subseteq N_i(C'_j)$ of size at most two, $G_{i+k_j^2}[U\cup V(C'_j)]$ is complete, due to Lemma \ref{lem:cycles_smalldistance} and the fact that  $M_{C_{k_j}}(k_j+|U|) < \binom{k_j+2}2 \leq k_j^2+1$. Hence $G_{i+k_j^2}[N_i(C'_j)\cup V(C'_j)]$ is complete. Similarly, if $j$ is even and $C'_j$ is bipartite with parts $X\subseteq V(C'_j)$ and $Y\subseteq V(C'_j)$, then let $X'\subseteq N_i(C'_j)$ be all vertices with a neighbour in $Y$ and $Y':=N_i(C'_j)\setminus X'$, noting that each vertex in $Y'$ must have a neighbour in $X$. Similarly to the odd case, for any non-empty $U\subseteq N_i(C'_j)$ with $|U\cap X'|,|U\cap Y'|\leq 1$, we have that $G_{i+k_j^2}[U\cup V(C'_j)]$ contains a complete bipartite graph with parts $X\cup (X'\cap U)$ and $Y\cup (Y'\cap U)$, by Lemma \ref{lem:cycles_smalldistance}. Hence $G_{i+k_j^2}[N_i(C'_j)\cup V(C'_j)]$  contains a spanning complete bipartite graph with parts $X\cup X'$ and $Y\cup Y'$ (which each have size at least $|X|=|Y|=k_j/2$).

 \begin{figure}[h]
    \centering
    \includegraphics[scale=0.94]{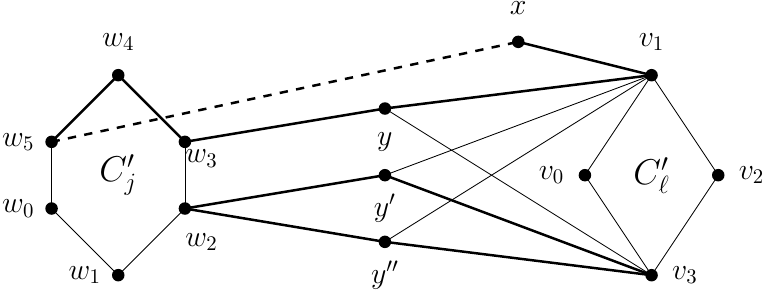}
    \caption{An example of an instance of $C'_j$ and $C'_\ell$ in \eqref{item: mult 3} with $j=6$ and $\ell=4$.}
    \label{fig:2cycles}
    \end{figure}

	\eqref{item: mult 3}  Let $x \in N_{i}(C'_\ell)\setminus N_{i}(C'_j)$.
	If $x$ has a $G_{i+k_1^2}$-neighbour in $N_{i+k_1^2}(C'_j)$, then $x\in N_{i+k_1^2+1}(C'_j)$ by part \eqref{item: mult 1}.
	Now suppose that $x$ does not have such a neighbour.
    In particular, it is not adjacent to any vertex in $N_{i}(C'_j)$.
	Since $|N_{i}(C'_j)\cap N_{i}(C'_\ell)|\geq 3$, we have that $G_{i+k_1^2}[N_{i}(C'_\ell)\cup V(C'_\ell)]$ contains both $x$ and a vertex from $N_{i}(C'_j)$ and thus cannot be complete.
	Then part \eqref{item: mult 2} forces $k_\ell$ to be even (in particular $k_\ell \geq 4$) and $G_{i+k_1^2}[N_{i}(C'_\ell)\cup V(C'_\ell)]$ to contain a spanning complete bipartite graph.
	Label the vertices of $C'_\ell$ by $v_0,\ldots,v_{k_\ell-1}$ such that $E(C'_\ell) = \{ v_0v_1,\ldots,v_{k_\ell-1}v_0 \}$ and $xv_1 \in E(G_{\tau_0})$ (see Figure \ref{fig:2cycles}).
	Let $y,y',y''$ be three distinct vertices in $N_{i}(C'_j)\cap N_{i}(C'_\ell)$.
	As $N_{G_{i+k_1^2}}(x) \cap \{ y,y',y'' \} = \emptyset$ we have $yv_t,y'v_t,y''v_t \in E(G_{i+k_1^2})$ for all odd $t\in[0,k_\ell-1]$. 
	Label the vertices of $C'_j$ by $w_0,\ldots,w_{k_j-1}$ (and relabel the vertices $y,y',y''$ if necessary) such that $yw_3 \in E(G_{i+k_1^2})$, and  $r\in\{ 1,2 \}$ such that $y'w_r,y''w_r \in E(G_{i+k_1^2})$ (in Figure \ref{fig:2cycles} we have the case $r=2$).
	Now
		\[
		xv_1yw_3\ldots w_{k_j-1}
		\]
	is a path of length $k_j-1$ that is vertex-disjoint from the $k_\ell$-cycle
		\[
		v_3\ldots v_{k_\ell-1}y'w_ry''v_3.
		\]
	Together with the cycles $C'_t$, $t\in [s]\setminus\{ j,\ell \}$ they form a copy of $H$ minus the edge $xw_{k_j-1}$.
	Therefore, $x \in N_{i+k_1^2+1}(C'_j)$.
	\end{proof}

We have that $N_i(C'_j)\subseteq N_{i+1}(C'_j)$  for all $i\geq 1$ and $j\in [s]$. Using Claim \ref{clm:multiple_growingsets}, we now show that after many steps, one of these inclusions must be strict if the process does not terminate.

\begin{clm} \label{clm:nbr1grow}
Let $\tau_1:= s^2k_1^2+3s+k_1^2+4\leq 2s^2k_1^2$. For $i\geq \tau_0$, if $\tau_H(G)>i+\tau_1$ then there is some $j\in [s]$ such that $N_{i+\tau_1}(C'_j)\neq N_i(C'_j)$. 
\end{clm}
\begin{proof}
    Suppose to the contrary that  $\tau_H(G)>i+\tau_1$ and $W_j:=N_i(C'_j)$ is such that $N_{i+\tau_1}(C'_j)=W_j$ for all $j\in [s]$. By part \eqref{item: mult 2} of Claim \ref{clm:multiple_growingsets},  for each $j\in [s]$ we have that $G_{i+k_1^2}[W_j\cup V(C'_j)]$ is either complete or contains  a spanning complete bipartite graph with partite sets of size at least $k_j/2$. Unless $G_{i+k_1^2}[W_j\cup V(C'_j)]$ is a complete bipartite graph, Lemma \ref{lem:cycles_completebipartitetoclique} then gives that $G_{i+k_1^2+2}[W_j\cup V(C'_j)]$ is complete. Now consider the steps 
    \begin{equation} \label{eq:I'} I':=\{i':i+k_1^2+2<i'\leq i+\tau_1-1\}.\end{equation} For each such step $i'\in I'$, there is an edge $e_{i'}\in E(G_{i'})\setminus E({G_{i'-1}})$  (as $\tau_H(G)>i+\tau_1$). If $e_{i'}$ has one vertex in $V'$ and the other in $V\setminus V'$ then there will be some $j\in [s]$ with $N_{i'}(C'_j)\neq N_{i'-1}(C'_j)$, a contradiction. Therefore either $e_{i'}\subseteq V'$ or $e_{i'}\cap V'=\emptyset$. There are at most $\binom{|V'|}{2}\leq \binom{sk_1}{2}\leq s^2k_1^2$ steps $i'\in I'$ for which the former can happen. If the latter occurs, as the sets $W_j$ cover $V\setminus V'$ (indeed the smaller sets $N_{\tau_0}(C'_j)$ cover $V\setminus V'$ by Claim \ref{clm:nbrU}) we have that there is some $j$ such that $e_{i'}\cap W_j\neq \emptyset$ and we claim that $e_{i'}\subseteq W_j$. If this was not the case then the other endpoint of $e_{i'}$ would lie outside of $W_j\cup V'$ and part \eqref{item: mult 1} of Claim \ref{clm:multiple_growingsets} gives that $N_{i'+1}(C'_j)\neq N_{i'}(C'_j)$, a contradiction. There can therefore be at most $3s$ steps $i'\in I'$ such that $e'$ has both endpoints in $V\setminus V'$. Indeed, as we noted above, for each $j\in [s]$, we have that $G_{i+k_1^2+2}[W_j\cup V(C'_j)]$ is either complete or complete bipartite. In the first case, there are no time steps $i'\in I'$ with $e_{i'}\subseteq W_j$ and in the second case there are at most 3, as if $i'_0$ is the first such time step then $G_{i'_0+2}[W_j\cup V(C'_j)]$ is complete by Lemma \ref{lem:cycles_completebipartitetoclique}.  Putting this all together and counting the steps $i'$ according to each possibility, we get that $|I'|\leq s^2k_1^2+3s<\tau_1-k_1^2-3$, contradicting the definition of $I'$.  
\end{proof}

Claim \ref{clm:nbr1grow} shows that after a certain amount of time steps without stabilising, one of the sets $N_i(C'_j)$ must grow. Our next claim shows that after more steps, it must in fact contain one of the other sets $N_{\tau_0}(C'_\ell)$.

\begin{clm} \label{clm:biggrow}
    Let $\tau_2:=2s^2\tau_1+k_1^2+1\leq 5s^4k_1^2$. For $i\geq \tau_0$, if $\tau_H(G)>i+\tau_2$, then there are $j,\ell\in [s]$ with $N_{\tau_0}(C'_\ell)\nsubseteq N_i(C'_j)$ and $N_{\tau_0}(C'_\ell)\subseteq N_{i+\tau_2}(C'_j)$. 
\end{clm}
\begin{proof}
    Suppose that $\tau_H(G)>i+\tau_2$. By Claim \ref{clm:nbr1grow}, for each $r\in [2s^2]$, there is some $j(r)\in [s]$ such that $|N_{i+r\tau_1}(C'_{j(r)})|>|N_{i+(r-1)\tau_1}(C'_{j(r)})|$. By averaging, there is some  $j\in [s]$ such that $|Z_j|\geq 2s$ where $Z_j:=N_{i+2s^2\tau_1}(C_j')\setminus N_{i}(C'_j)$. Now for each $z\in Z_j$, by Claim \ref{clm:nbrU}, we have that there is some $\ell\in [s]\setminus \{j\}$ such that $z\in N_{\tau_0}(C_\ell')$. By the pigeonhole principle, as $|Z_j|\geq 2(s-1)+1$, there is in fact some $\ell\in [s]\setminus \{j\}$ such that $|Z_j\cap N_{\tau_0}(C'_\ell)|\geq 3$. In particular, as $Z_j\cap N_{\tau_0}(C'_j)=\emptyset$, we have that $N_{\tau_0}(C'_\ell)\nsubseteq N_i(C'_j)$. Finally then by part \eqref{item: mult 3} of Claim \ref{clm:multiple_growingsets}, we have that $N_{\tau_0}(C_\ell')\subseteq N_{i+2s^2\tau_1}(C'_\ell)\subseteq N_{i+2s^2\tau_1+k_1^2+1}(C'_j)$ as required. 
\end{proof}

If $\tau_H(G)>\tau_0+s^2\tau_2$ then by Claim \ref{clm:biggrow} for each $t\in [s^2]$, there is some $j=j(t)$ and $\ell=\ell(t)\in [s]\setminus j(t)$ such that $N_{\tau_0}(C_{\ell}')\nsubseteq N_{i+(t-1)\tau_2}(C'_j)$ and $N_{\tau_0}(C_{\ell}')\subseteq N_{i+t\tau_2}(C'_j)$. As the sets $N_i(C'_j)$ are monotone increasing with respect to $i$, we cannot have $t<t'$ with $j(t)=j(t')$ and $\ell(t)=\ell(t')$ (as then $N_{i+(t'-1)\tau_2}(C'_j)$ would contain $N_{\tau_0}(C'_\ell)$). As there are at most $s(s-1)$ choices for a  pair $(j(t),\ell(t))$  and $s^2$ choices for $t$, we get a contradiction. 

Hence we must have \[\tau_H(G)\leq \tau_0+s^2\tau_2\leq \log_{k_1-1}(n)+4k_1^2+5s^6k_1^2\leq  \log_{k_1-1}(n)+6s^6k_1^2,\]
establishing \eqref{eq:T def} and concluding the proof of Theorem \ref{thm:multiple_cycles}.

\section*{Acknowledgements}

We are very grateful to the anonymous referee for their careful reading of the manuscript and their helpful suggestions. 

\bibliography{Biblio}

\end{document}